\newtheorem{defin}{Definition}
\newtheorem{lemma}{Lemma}
\newtheorem{thm}{Theorem}
\newtheorem{rem}{Remark}
\newtheorem{rems}{Remarks}
\newtheorem{prop}{Proposition}
\newtheorem{corl}{Corollary}
\def\Z{\mathbb Z}
\def\L{\mathbb L}
\newcommand{\N}{\mathbb N}
\newcommand{\R}{\mathbb R}
\newcommand{\Q}{\mathbb Q}
\begin{document}
\title{{{{Stochastic flows related to Walsh Brownian motion}}}}
\author{Hatem Hajri\\ D\'epartement de Math\'ematiques\\ Universit\'e Paris-Sud 11, 91405 Orsay, France\\Hatem.Hajri@math.u-psud.fr}
\date{}

\maketitle

\begin{abstract}
We define an equation on a simple graph which is an extension of Tanaka's equation and the skew Brownian motion equation. We then apply the theory of transition kernels developed by Le Jan and Raimond and show that all the solutions can be classified by probability measures. 
\end{abstract}
\textbf{Key words}: Stochastic flows of kernels, Skew Brownian motion, Walsh Brownian motion.\\
\textbf{AMS 2000 Subject Classification}: Primary 60H25; Secondary 60J60.\\
Submitted to EJP on January 17, 2011, final version accepted July 18, 2011.
%*****************************************************************************

\section{Introduction and main results.}
In \cite{MR1905858}, \cite{MR2060298} Le Jan and Raimond have extended the classical theory of stochastic flows to include flows of probability kernels.  Using the Wiener chaos decomposition, it was shown that non Lipschitzian stochastic differential equations have a unique Wiener measurable solution given by random kernels. Later, the theory was applied in \cite{MR2235172} to the study of Tanaka's equation:
\begin{equation}\label{ferj}
\varphi_{s,t}(x)=x+\int_{s}^t \textrm{sgn}(\varphi_{s,u}(x))W(du),\ \ s\leq t, x\in\R,
\end{equation}
where $\textrm{sgn}(x)=1_{\{x>0\}}-1_{\{x\leq0\}}, W_t=W_{0,t} 1_{\{t>0\}}-W_{t,0} 1_{\{t\leq0\}}$ and $(W_{s,t}, s\leq t)$ is a real white noise (see Definition 1.10 \cite{MR2060298}) on a probability space $(\Omega,\mathcal A,\mathbb P)$. If $K$ is a stochastic flow of kernels (see Definition \ref{ghazel} below) and $W$ is a real white noise, then by definition, $(K,W)$ is a solution of Tanaka's SDE if for all $s\leq t, x\in\R$, $f\in C^2_b(\R)$ ($f$ is $C^2$ on $\R$ and $f',f''$ are bounded) 
\begin{equation}\label{dom}
K_{s,t}f(x)=f(x)+\int_s^tK_{s,u}(f'\textrm{sgn})(x)W(du)+\frac{1}{2}\int_s^t K_{s,u}f''(x)du\ \ a.s.
\end{equation}
%When $K=\delta_{\varphi}$ is a Dirac flow (flow of mappings), then $K$ solves (\ref{dom}) if and only if $\varphi$ solves (\ref{ferj}).
It has been proved \cite{MR2235172} that each solution flow of (\ref{dom}) can be characterized by a probability measure on $[0,1]$ which entirely determines  its law. Define $$\tau_{s}(x)=\inf\{r\geq s : W_{s,r}=-|x|\}, s, x\in\R.$$
Then, the unique $\mathcal F^W$ adapted solution (Wiener flow) of (\ref{dom}) is given by
\[
K^{W}_{s,t}(x)=\delta_{x+\textrm{sgn}(x)W_{s,t}} 1_{\{t\leq \tau_{s}(x)\}}+\frac{1}{2}(\delta_{W^+_{s,t}}+\delta_{-W^+_{s,t}}) 1_{\{t>\tau_{s}(x)\}},\ W_{s,t}^+:=W_{s,t}-\displaystyle\inf_{u\in[s,t]}W_{s,u}.
\]
Among solutions of (\ref{dom}), there is only one flow of mappings (see Definition \ref{york} below) which has been already studied in \cite{MR1816931}.
\\
We now fix $\alpha\in]0,1[$ and consider the following SDE having a less obvious extension to kernels:
\begin{equation}\label{10}
X_{t}^{s,x}=x+W_{s,t}+(2\alpha-1)\tilde{L}_{s,t}^{x},\ \ t\geq s, x\in\R,
\end{equation}
where
$$\tilde{L}_{s,t}^{x}=\lim_{\varepsilon \rightarrow 0^+} \frac{1}{2\varepsilon}\int_{s}^{t} 1_{\{|X_{u}^{s,x}|\leq \varepsilon\}}du\ \ \ (\textrm{The symmetric local time}).$$ 
Equation (\ref{10}) was introduced in \cite{MR606993}. For a fixed initial condition, it has a pathwise unique solution which is distributed as the skew Brownian motion $(SBM)$ with parameter $\alpha$ $(SBM(\alpha))$. It was shown in \cite{MR1837288} that when $\alpha\neq\frac{1}{2}$, flows associated to (\ref{10}) are coalescing and a deeper study of (\ref{10}) was provided later in \cite{MR1880238} and \cite{MR2094439}. Now, consider the following generalization of (\ref{ferj}):
\begin{equation}\label{frouja}
X_{s,t}(x)=x+\int_{s}^t \textrm{sgn}(X_{s,u}(x))W(du)+(2\alpha-1)\tilde{L}_{s,t}^{x}(X),\ \ s\leq t, x\in\R,
\end{equation}
where
$$
\tilde{L}_{s,t}^{x}(X)=\displaystyle{\lim_{\varepsilon \rightarrow 0^+ }}\frac{1}{2\varepsilon }\int_s^t 1_{\{|X_{s,u}(x)|\leq \varepsilon\}}du. $$
Each solution of (\ref{frouja}) is distributed as the $SBM(\alpha)$. By Tanaka's formula for symmetric local time (\cite{MR1725357} page 234),
$$|X_{s,t}(x)|=|x|+\int_{s}^t \widetilde{\textrm{sgn}}(X_{s,u}(x))dX_{s,u}(x)+\tilde{L}_{s,t}^{x}(X),$$
where $\widetilde{\textrm{sgn}}(x)=1_{\{x>0\}}-1_{\{x<0\}}$. By combining the last identity with (\ref{frouja}), we have
\begin{equation}\label{tissma3}
|X_{s,t}(x)|=|x|+W_{s,t}+\tilde{L}_{s,t}^{x}(X).\
\end{equation}
The uniqueness of solutions of the Skorokhod equation (\cite{MR1725357} page 239) entails that
\begin{equation}\label{3alim}
|X_{s,t}(x)|=|x|+W_{s,t}-\min_{s\leq u\leq t}[(|x|+W_{s,u})\wedge0].
\end{equation}
Clearly (\ref{tissma3}) and (\ref{3alim}) imply that $\sigma(|X_{s,u}(x)|; s\leq u\leq t)=\sigma(W_{s,u}; s\leq u\leq t)$ which is strictly smaller than $\sigma(X_{s,u}(x); s\leq u\leq t)$ and so $X_{s,\cdot}(x)$ cannot be a strong solution of (\ref{frouja}). For these reasons, we call (\ref{frouja}) Tanaka's SDE related to $SBM(\alpha)$.
\\
From now on, for any metric space $E$, $C(E)$ (respectively $C_b(E)$) will denote the space of all continuous (respectively bounded continuous) $\R$-valued functions on $E$. Let
\begin{itemize}
 \item $C_b^2(\R^*)=\{f \in C(\R) : f\ \textrm{is twice derivable on}\ \R^*, f', f'' \in C_b({\R}^{*}),\  f'_{|]0,+\infty[}, f''_{|]0,+\infty[}\\ (\textrm{resp.}\ f'_{|]-\infty,0[} ,f''_{|]-\infty,0[})\ \textrm{have right}\ (\textrm{resp. left})\ \textrm{limit in}\ 0\}.$
\item $D_{\alpha}=\{f\in C_b^2(\R^*) : \alpha f'(0+)=(1-\alpha) f'(0-)\}$.
\end{itemize}
For $f\in D_{\alpha}$, we set by convention $f'(0)=f'(0-), f''(0)=f''(0-)$. By It\^o-Tanaka's formula (\cite{MR2280299} page 432) or Freidlin-Sheu formula (see Lemma 2.3 \cite{MR1743769} or Theorem \ref{palestine} in Section 2) and Proposition \ref{vi} below, both extensions to kernels of (\ref{10}) and (\ref{frouja}) may be defined by 
\begin{equation}\label{laurent}
K_{s,t}f(x)=f(x)+\int_s^tK_{s,u}(\varepsilon f')(x)W(du)+\frac{1}{2}\int_s^t K_{s,u}f''(x)du,\ \ f\in D_{\alpha},
\end{equation}
where $\varepsilon(x)=1$ (respectively $\varepsilon(x)=\textrm{sgn}(x)$) in the first (respectively second) case, but due to the pathwise uniqueness of (\ref{10}), the unique solution of (\ref{laurent}) when $\varepsilon(x)=1$, is $K_{s,t}(x)=\delta_{X_{t}^{s,x}}$ (this can be justified by the weak domination relation, see (\ref{h})). Our aim now is to define an extention of (\ref{laurent}) related to Walsh Brownian motion in general. The latter process was introduced in \cite{MR509476} and will be recalled in the coming section. We begin by defining our graph.

\begin{defin}(Graph G)\\
Fix $N\geq 1$ and $\alpha_1,\cdots,\alpha_N > 0$ such that $\displaystyle{\sum_ {i=1}^{N}\alpha_i=1}$.\\
In the sequel $G$ will denote the graph below (Figure 1) consisting of $N$ half lines $(D_i)_{1\leq i\leq N}$ emanating from $0$. Let $\vec{e}_i$ be a vector of modulus $1$ such that $D_i=\{h\vec{e}_i,h\geqslant 0\}$ and define for all function $f : G\longrightarrow{\R}$ and $i\in[1,N]$, the mappings :
$$\begin{array}{ccccc}
f_i & : & {\R_+} & \longrightarrow & {\R}  \\
& &h& \longmapsto & f(h\vec{e}_i) \\
\end{array}$$
Define the following distance on $G$:
$$\begin{array}{ll}
d(h\vec{e}_i,h'\vec{e}_j)=\begin{cases}
h+h'&\text{if}\  i\neq j, (h,h')\in \R_+^2,\\
|h-h'|\ &\text {if}\ i=j, (h,h')\in \R_+^2.\\
\end{cases}
\end{array}$$
For $x\in G$, we will use the simplified notation $|x|:=d(x,0)$.\\
We equip $G$ with its Borel $\sigma$-field $\mathcal{B}(G)$ and use the notation $G^*=G\setminus\{0\}$. Now, define\\
$\bullet$ $C_b^2(G^*)=\{f \in C(G) :
\forall i\in [1,N], f_i\ \textrm{is twice derivable on}\ \R^*_+, f'_i, f''_i \in C_b({\R}_{+}^{*})\ \textrm{and both}\\
\ \textrm{have finite limits at}\ 0+\}$.\\
$\bullet$ $D(\alpha_1,\cdots,\alpha_N)=\{f\in C_b^2(G^*) : \displaystyle\sum_{i=1}^{N}\alpha_i f_i'(0+)=0\}$.\\
For all $x\in G$, we define $\vec{e}(x)=\vec{e}_i$ if $x\in D_i, x\neq0$ (convention $\vec{e}(0)=\vec{e}_N$).
\noindent For $f\in C_b^2(G^*)$, $x\neq 0$, let $f'(x)$ be the derivative of $f$ at $x$ relatively to $\vec{e}(x)$ ($=f'_i(|x|)$ if $x\in D_i$) and $f''(x)=(f')'(x)$ ($=f''_i(|x|)$ if $x\in D_i$). We use the conventions $f'(0)=f_{N}'(0+), f''(0)=f_{N}''(0+)$. Now, associate to each ray $D_i$ a sign $\varepsilon_i\in\{-1,1\}$ and then define
$$ \varepsilon(x) = \begin{cases}
       \varepsilon_i & \text{if} \ x\in D_i,x\neq0 \\
       \varepsilon_N & \text{if} \ x=0 \\
   \end{cases}$$
To simplify, we suppose that $\varepsilon_1=\cdots=\varepsilon_p=1,\ \ \varepsilon_{p+1}=\cdots=\varepsilon_N=-1$ for some $p\leq N$. Set $$G^+=\bigcup_{1\leq i\leq p}D_i,\ \ G^-=\bigcup_{p+1\leq i\leq N}D_i.\ \textrm{Then}\ \ G=G^+\bigcup G^-.$$

\begin{figure}[h]
\begin{center}
\resizebox{9cm}{6cm}{\input{fig_graphs1.pstex_t}}
\caption{Graph $G$.}
\end{center}
\end{figure}

We also put $\alpha ^{+}=1-\alpha^-:=\sum_{i=1}^{p}\alpha_i$.
\begin{rem}
Our graph can be simply defined as $N$ pieces of $\R_+$ in which the $N$ origins are identified. The values of the $\vec{e}_i$ will not have any effect in the sequel.
\end{rem}

\end{defin}
\begin{defin} (Equation ($E$)).\\
On a probability space $(\Omega,\mathcal A,\mathbb P)$, let $W$ be a real white noise and $K$ be a stochastic flow of kernels on $G$ (a precise definition will be given in Section 2). We say that $(K,W)$ solves $(E)$ if for all $s\leq t, f\in D(\alpha_1,\cdots,\alpha_N), x\in G$,
$$K_{s,t}f(x)=f(x)+\int_s^tK_{s,u}(\varepsilon f')(x)W(du) + \frac{1}{2}\int_s^tK_{s,u}f''(x)du\ \ a.s.$$
If $K=\delta_\varphi$ is a solution of $(E)$, we simply say that $(\varphi,W)$ solves $(E)$.
\end{defin}
\begin{rems}
(1) If $(K,W)$ solves $(E)$, then $\sigma(W)\subset\sigma(K)$ (see Corollary \ref{mawloud}) below. So, one can simply say that $K$ solves $(E)$.\\
(2) The case $N=2,p=2, \varepsilon_1=\varepsilon_2=1$ (Figure 2) corresponds to Tanaka's SDE related to $SBM$ and includes in particular the usual Tanaka's SDE \cite{MR2235172}. In fact, let $(K^{\R},W)$ be a solution of (\ref{laurent}) with $\alpha=\alpha_1, \varepsilon(y)=\textrm{sgn}(y)$ and define $\psi(y)=|y|(\vec{e}_1 1_{y\geq0}+\vec{e}_2 1_{y<0}), y\in\R$. For all $x\in G$, define $K^G_{s,t}(x)=\psi(K^{\R}_{s,t}(y))$ with $y=\psi^{-1}(x)$. Let $f\in D(\alpha_1,\alpha_2), x\in G$ and $g$ be defined on $\R$ by $g(z)=f(\psi(z))$ $(g\in D_{\alpha_1})$. Since $K^{\R}$ satisfies(\ref{laurent}) in $(g,\psi^{-1}(x))$ ($g$ is the test function and $\psi^{-1}(x)$ is the starting point), it easily comes that $K^G$ satisfies $(E)$ in $(f,x)$. Similarly, if $K^G$ solves $(E)$, then $K^{\R}$ solves (\ref{laurent}).
\vspace{0.3cm}

\begin{figure}[h]
\begin{center}
%\resizebox{9cm}{0.7cm}{\input{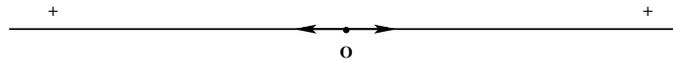}}
\includegraphics[width=9cm]{fig_graphs2}
\caption{Tanaka's SDE.}
\end{center}
\end{figure}

\noindent(3) As in (2), the case $N=2,p=1, \varepsilon_1=1, \varepsilon_2=-1$ (Figure 3) corresponds to (\ref{10}).
\begin{figure}[h]
\begin{center}
\includegraphics[width=9cm]{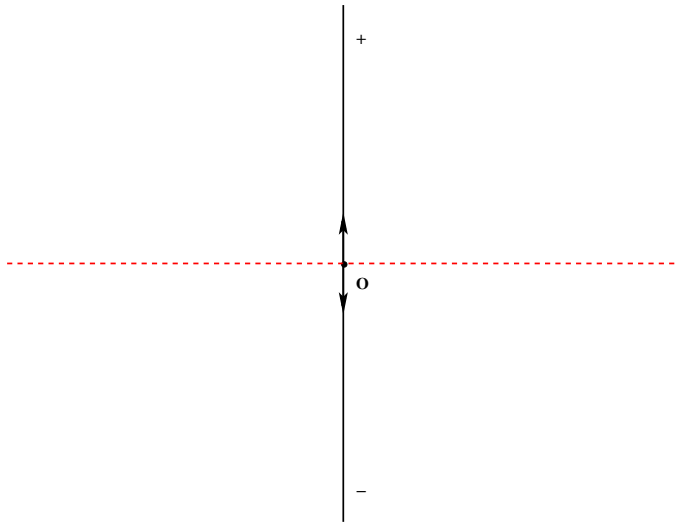}
\caption{SBM equation.}
\end{center}
\end{figure}

%\begin{figure}[h]
%\begin{center}
%\resizebox{9cm}{6cm}{\input{fig_graphs4.pstex_t}}
%\caption{Graph $G$.}
%\end{center}
%\end{figure}

\end{rems}
\noindent In this paper, we classify all solutions of $(E)$ by means of probability measures. We now state the first
\begin{thm}\label{ghasseni}
Let $W$ be a real white noise and $X_{t}^{s,x}$ be the flow associated to (\ref{10}) with $\alpha=\alpha^+$. Define $Z_{s,t}(x)=X_{t}^{s,\varepsilon(x)|x|}, s\leq t, x\in G$ and
\begin{eqnarray}
K_{s,t}^{W}(x)&=&\delta_{x+\vec{e}(x)\varepsilon(x)W_{s,t}}1_{\{t\leq \tau_{s,x}\}}\nonumber\\
&+&\big(\sum_{i=1}^{p}\frac{\alpha_i}{\alpha^+}\delta_{\vec{e}_i|Z_{s,t}(x)|} 1_{\{Z_{s,t}(x)>0\}}+\sum_{i=p+1}^{N}\frac{\alpha_i}{\alpha^-}\delta_{\vec{e}_i|Z_{s,t}(x)|}1_{\{Z_{s,t}(x)\leq0\}}\big)1_{\{t> \tau_{s,x}\}},\nonumber\
\end{eqnarray}
where $\tau_{s,x}=\inf\{r\geq s : x+\vec{e}(x)\varepsilon(x)W_{s,r}=0\}$. Then, $K^W$ is the unique Wiener solution of $(E)$. This means that $K^W$ solves $(E)$ and if $K$ is another Wiener solution of $(E)$, then for all $s\leq t, x\in G$,    \ \ $K^W_{s,t}(x)=K_{s,t}(x) \ a.s.$ 
\end{thm}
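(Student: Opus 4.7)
The plan is to prove existence and uniqueness of the Wiener solution separately, in both cases reducing the graph-valued problem to the one-dimensional skew Brownian situation covered by Le Jan and Raimond in \cite{MR2235172}.

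\emph{Existence.} I would verify that $K^W$ solves $(E)$ by splitting according to whether $t \le \tau_{s,x}$ or $t > \tau_{s,x}$ and gluing at $\tau_{s,x}$ via the strong Markov property of $W$. In the first regime $K^W_{s,t}(x) = \delta_{x + \vec{e}(x)\varepsilon(x) W_{s,t}}$ is supported on the single ray $D_i$ containing $x$, on which $\vec{e}(x)$ and $\varepsilon(x)$ are constant; ordinary It\^o applied to $r \mapsto f_i(|x| + \varepsilon_i W_{s,r})$ produces the stochastic and drift terms with exactly the integrand $\varepsilon_i f_i'(|K^W_{s,r}(x)|) = (\varepsilon f')(K^W_{s,r}(x))$. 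For the second regime I would rewrite $K^W_{s,t}f(x) = F(Z_{s,t}(x))$ with
\begin{equation*}
F(z) = 1_{\{z > 0\}} \sum_{i=1}^{p} \frac{\alpha_i}{\alpha^+}\, f_i(z) \;+\; 1_{\{z \le 0\}} \sum_{i=p+1}^{N} \frac{\alpha_i}{\alpha^-}\, f_i(-z),
\end{equation*}
and check that the defining condition $\sum_{i=1}^N \alpha_i f_i'(0+) = 0$ of $D(\alpha_1,\dots,\alpha_N)$ is exactly equivalent to $\alpha^+ F'(0+) = \alpha^- F'(0-)$, i.e.\ $F \in D_{\alpha^+}$. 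Since $Z_{s,t}(x)$ is the pathwise unique strong solution of (\ref{10}) with $\alpha = \alpha^+$, the already-known real-valued identity (\ref{laurent}) with $\varepsilon \equiv 1$ applied to $F$ translates directly into the desired SPDE for $K^W$, once we match $F'(Z_{s,u}(x)) = K^W_{s,u}(\varepsilon f')(x)$ and $F''(Z_{s,u}(x)) = K^W_{s,u}f''(x)$.

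\emph{Uniqueness.} For uniqueness of the Wiener solution I would apply the Wiener chaos expansion method of Le Jan-Raimond. Any $\sigma(W)$-measurable solution $K$ has $K_{s,t} f(x) \in L^2(\sigma(W))$ for each $(s,t,x,f)$, hence admits a chaos decomposition $\sum_{n \ge 0} J_n(s,t,x,f)$. Taking expectations in $(E)$ and using that $D(\alpha_1,\dots,\alpha_N)$ is a core for the generator of Walsh Brownian motion on $G$ identifies the deterministic $0$-th chaos with the Walsh Brownian semigroup $P_{t-s} f(x)$. Iterating $(E)$ then expresses the $n$-th chaos coefficient as an iterated stochastic integral of deterministic kernels built from $P_t$; these kernels are uniquely determined, so the chaos expansion of $K_{s,t}f(x)$ coincides with that of $K^W_{s,t}f(x)$ (which is manifestly Wiener-measurable, $Z_{s,t}(x)$ being a strong solution of (\ref{10})).

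\emph{Main difficulty.} The delicate point I anticipate is showing that no spurious local-time contribution at $0$ appears in the SPDE when the graph-valued trajectory transitions from the single-ray phase to the multi-ray phase at $\tau_{s,x}$. This is exactly where the boundary condition $\sum_{i=1}^N \alpha_i f_i'(0+) = 0$ plays its role, and the weights $\alpha_i/\alpha^\pm$ appearing in $K^W$ are chosen precisely so that the local-time contributions produced at $0$ by the two partial sums defining $F$ cancel. Once this cancellation is carefully justified, using the Freidlin-Sheu formula (Theorem \ref{palestine}) for $F(Z_{s,\cdot}(x))$, both the existence and uniqueness halves reduce to already-established one-dimensional results.
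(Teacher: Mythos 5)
Your proposal is correct, and the uniqueness half follows essentially the same route as the paper: Proposition \ref{ghassen} establishes the mild formulation (\ref{valerie}), $K_{0,t}f(x)=P_tf(x)+\int_0^tK_{0,u}(D(P_{t-u}f))(x)W(du)$, and iterates it to pin down every Wiener chaos coefficient. (Be aware that the real work there is not the iteration itself but justifying (\ref{valerie}) by a Riemann-sum approximation and showing the test-function class $\mathcal S$ is stable under $f\mapsto (P_tf)'$ via the derivation formula (\ref{key}); your sketch compresses exactly these steps.) The existence half, however, is genuinely different from the paper's. You verify the SPDE directly by writing $K^W_{s,t}f(x)=F(Z_{s,t}(x))$ after $\tau_{s,x}$ and observing that the Walsh boundary condition $\sum_i\alpha_if_i'(0+)=0$ is equivalent to $F\in D_{\alpha^+}$, so that the It\^o--Tanaka formula for the one-dimensional $SBM(\alpha^+)$ (i.e.\ (\ref{laurent}) with $\varepsilon\equiv1$) does all the work; the matching of $F'$, $F''$ with $K^W_{s,u}(\varepsilon f')$, $K^W_{s,u}f''$ and the gluing at $\tau_{s,x}$ are exactly as you describe, and the local-time cancellation you flag as the main difficulty is precisely the membership $F\in D_{\alpha^+}$. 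The paper instead first builds the coalescing flow of mappings $\varphi$ by flipping excursions of the Burdzy--Kaspi flow, proves $\varphi$ solves $(E)$ via a random-walk approximation and the Freidlin--Sheu formula on $G$ (Proposition \ref{wajj}), and obtains $K^W$ by filtering $\delta_\varphi$ with respect to $\sigma(W)$ (Remarks \ref{ta3}). Your route is shorter and self-contained for Theorem \ref{ghasseni} alone, since it never leaves the half-line; the paper's heavier construction is not wasted, though, because $\varphi$ and the filtering mechanism are exactly what is needed later to produce the whole family $K^{m^+,m^-}$ of Theorem \ref{hajri}, for which no single deterministic function $F$ of $Z_{s,t}(x)$ suffices.
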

\noindent The proof of this theorem follows \cite{MR1905858} (see also \cite{MR0000004} for more details) with some modifications adapted to our case. We will use Freidlin-Sheu formula for Walsh Brownian motion to check that $K^W$ solves $(E)$. Unicity will be justified by means of the Wiener chaos decomposition (Proposition \ref{ghassen}). Besides the Wiener flow, there are also other weak solutions associated to $(E)$ which are fully described by the following  

\begin{thm}\label{hajri} (1) Define $$\Delta_k=\big\{u=(u_1,\cdots,u_k)\in [0,1]^k : {\sum_{i=1}^k }u_i=1\big\},\ \ k\geq 1.$$
Suppose $\alpha ^{+}\neq\frac{1}{2}$.\\
 (a) Let $m^+$ and $m^-$ be two probability measures respectively on $\Delta_p$ and $\Delta_{N-p}$ satisfying :
\begin{itemize}
\item[(+)]$\displaystyle\int_{\Delta_p} u_i m^+ (du)=\frac{\alpha _i}{\alpha^+}, \forall 1\leq i\leq p$,
\item[(-)]$\displaystyle\int_{\Delta_{N-p}}u_j m^- (du)=\frac{\alpha _{j+p}}{\alpha^-},\ \forall 1\leq j\leq N-p.$
\end{itemize}
\vspace{0.3cm}
Then, to $(m^+, m^-)$ is associated a stochastic flow of kernels $K^{m^+, m^-}$ solution of $(E)$.
\begin{itemize}
\item To \hspace{0.2cm} $(\delta_{(\frac{\alpha_1}{\alpha^+},\cdots,\frac{\alpha_p}{\alpha^+})},\delta_{(\frac{\alpha_{p+1}}{\alpha^-},\cdots,\frac{\alpha_N}{\alpha^-})})$ is associated a Wiener solution $K^{W}$.
\item To \hspace{0.2cm}$(\displaystyle{\sum_{i=1}^{p}} \frac{\alpha_i}{\alpha^+} \delta_{0,..,0,1,0,..,0},\displaystyle{\sum_{i=p+1}^{N}} \frac{\alpha_i}{\alpha^-} \delta_{0,..,0,1,0,..,0})$ is associated a coalescing stochastic flow of mappings $\varphi$.
\end{itemize}
\noindent (b) For all stochastic flow of kernels $K$ solution of $(E)$ there exists a unique pair of measures $(m^+,m^-)$ satisfying conditions $(+)$ and $(-)$ such that $K\overset{law}{=}K^{m^+,m^-}$.\\
\noindent (2) If $\alpha ^{+}=\frac{1}{2}, N>2$, there is just one solution of $(E)$ which is a Wiener solution.
\end{thm}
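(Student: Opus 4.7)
The argument splits into three stages: a construction (part~(1)(a)), a classification (part~(1)(b)), and a rigidity argument for the degenerate case $\alpha^+=1/2,\ N>2$ (part~(2)). All three rest on the observation, analogous to (\ref{3alim}), that the radial process $|K_{s,t}(x)|$ of any solution is $\sigma(W)$-measurable and equals a reflected Brownian motion driven by $\varepsilon(x)W$. I would derive this by applying $(E)$ to a regularisation of $|\cdot|$, obtaining a Tanaka-type identity like (\ref{tissma3}), and then invoking uniqueness of the Skorokhod reflection problem. Consequently, any two solutions can differ only in how mass is assigned to the $N$ rays at each departure from $0$, and this is precisely what the pair $(m^+,m^-)$ is designed to encode.

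For (1)(a), I would construct $K^{m^+,m^-}$ by running the $SBM(\alpha^+)$ process $Z_{s,\cdot}(x)=X_{\cdot}^{s,\varepsilon(x)|x|}$ from Theorem~\ref{ghasseni} and labelling each excursion of $Z$ away from $0$ by an independent auxiliary sample: a vector $U\in\Delta_p$ drawn from $m^+$ for positive excursions, and $V\in\Delta_{N-p}$ drawn from $m^-$ for negative excursions. During a positive excursion one sets $K_{s,t}(x)=\sum_{i=1}^{p}U_i\,\delta_{\vec{e}_i Z_{s,t}(x)}$, and similarly $\sum_{j=1}^{N-p}V_j\,\delta_{\vec{e}_{p+j}(-Z_{s,t}(x))}$ during a negative one. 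The flow property and measurability follow from those of $Z$ together with the independence of the excursion labels. That $(E)$ is satisfied is then checked by applying Freidlin--Sheu's formula (Theorem~\ref{palestine}) to $K_{s,t}f(x)$ for $f\in D(\alpha_1,\ldots,\alpha_N)$: the marginal conditions $(+)$ and $(-)$ combined with $\sum_i\alpha_i f'_i(0+)=0$ annihilate the boundary contribution at $0$, leaving exactly the required drift and noise terms. The two distinguished choices of $(m^+,m^-)$ recover respectively the Wiener flow $K^W$ (deterministic redistribution, identifying the construction with Theorem~\ref{ghasseni}) and a coalescing flow of mappings (all mass placed on a single random ray).

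For (1)(b), given any solution $K$, I would extract $(m^+,m^-)$ from the two-point motion. For two starting points $x,y\in G^+$ whose one-point motions under $K$ become coupled at $0$ at the end of a common excursion, the conditional joint law of the vector $\bigl(K_{s,t}(x)(D_i)\bigr)_{1\leq i\leq p}$ immediately after the next exit from $0$ is, by the Markov and flow properties, a probability on $\Delta_p$ that depends neither on $(x,y)$ nor on $s$; this measure is $m^+$, and $m^-$ arises symmetrically. The conditions $(+)$ and $(-)$ are forced by the fact that the one-point motion of $K$ is the Walsh Brownian motion with weights $(\alpha_i)$. Uniqueness of the law of $K$ given $(m^+,m^-)$ is then obtained by reconstructing all $n$-point motions inductively from $(m^+,m^-)$ and $W$, following the programme of \cite{MR1905858}.

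The main obstacle is part~(2). When $\alpha^+=1/2$, the process $Z$ reduces to a standard Brownian motion and the two halves $G^\pm$ play perfectly symmetric roles, so the two-point motion of a pair $(x,y)\in G^+\times G^-$ is highly rigid: both components visit $0$ simultaneously along a common clock, and the consistency of the $n$-point motions with $(E)$ imposes a system of linear equations on joint distributions over $\Delta_p\times\Delta_{N-p}$. Combined with the marginal conditions $(+)$ and $(-)$, this system admits only Dirac solutions when $N>2$, forcing $(m^+,m^-)$ to the Wiener choice. Making this rigidity precise---in particular, ruling out every nontrivial coupling of the $\Delta_p$- and $\Delta_{N-p}$-valued redistribution variables---is the genuinely new content beyond the Tanaka case of \cite{MR2235172}, where (for $N=2$) the absence of this rigidity is exactly what allows the one-parameter family of non-Wiener solutions to persist.
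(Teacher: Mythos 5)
Your high-level strategy coincides with the paper's: the radial part of any solution is pinned down by $W$ via a Skorokhod-type identity, solutions are built by decorating excursions of a skew Brownian motion with i.i.d.\ $\Delta_p$- and $\Delta_{N-p}$-valued labels, and the pair $(m^+,m^-)$ is recovered from the redistribution of mass at departures from $0$. However, there are three concrete gaps. First, in (1)(a) you label the excursions of $Z_{s,\cdot}(x)$ independently for each starting point and assert that ``the flow property follows''; it does not. If distinct starting points carry independent labels, then $K_{s,u}(x)$ and $K_{t,u}(K_{s,t}(x))$ disagree after the trajectories meet. The paper needs the coalescing Burdzy--Kaspi flow $Y$ of $SBM(\alpha^+)$ (Section 3.1, in particular Lemmas \ref{abed}, \ref{sophie} and \ref{yor}) precisely so that labels can be assigned consistently: one constructs the flow inductively over a countable dense set of $(s,x)$, reusing the labels of an earlier trajectory after the coalescence time $T^{x_0,\dots,x_q}_{s_0,\dots,s_q}$, and then extends to all $(s,x)$ via Lemma \ref{sophie}. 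This machinery is absent from your sketch, and it is also where the hypothesis $\alpha^+\neq\tfrac12$ enters (for $\alpha^+=\tfrac12$ the flow $Y_{s,t}(x)=x+W_{s,t}$ never coalesces, so the whole construction and hence the classification in (1)(b) breaks down). Second, in (1)(b) you never address why the extracted redistribution vector is \emph{independent of $W$}; this is the delicate point, proved in the paper through the excursion-theoretic identity $\mathcal F_{L+}=\mathcal F_{L-}$ at the last zero $L$ before $Z$ reaches a fixed level, and it is needed both to identify the law of $K$ with that of $K^{m^+,m^-}$ and to run the induction of Lemma \ref{nouv} (which conditions on $W$).

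Third, your treatment of part (2) is both deferred and based on a misstated mechanism. You claim that for $(x,y)\in G^+\times G^-$ ``both components visit $0$ simultaneously along a common clock''; the paper proves the opposite and exploits it. Since $\alpha^+=\tfrac12$ makes $h(x)=\varepsilon(x)|x|$ an element of $D(\alpha_1,\dots,\alpha_N)$, applying $h$ in $(E)$ gives $K_{0,t}h(x)=h(x)+W_t$, whence $h(X^{x_1}_t)-h(X^{x_2}_t)=h(x_1)-h(x_2)$ for the two-point motion: the signed radial parts move in lockstep, so two points with distinct values of $h$ are \emph{never} at $0$ at the same time. It is exactly this decoupling (when one coordinate is at $0$ choosing a ray, all others are pinned away from $0$) that lets the paper determine $P^{n+1}$ from $P^n$ by the strong Markov property at the successive zero-hitting times $S_k$, with no system of linear equations on couplings over $\Delta_p\times\Delta_{N-p}$ ever appearing. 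As written, your argument for (2) would not close.
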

\begin{rems}
 (1) If $\alpha^+=1$, solutions of $(E)$ are characterized by a unique measure $m^+$ satisfying condition $(+)$ instead of a pair $(m^+,m^-)$ and a similar remark applies if $\alpha^-=1$.\\
(2) The case $\alpha^+=\frac{1}{2}, N=2$ does not appear in the last theorem since it corresponds to $dX_t=W(dt)$.
\end{rems}
This paper follows ideas of \cite{MR2235172} in a more general context and is organized as follows. In Section 2, we remind basic definitions of stochastic flows and Walsh Brownian motion. In Section 3, we use a \textquotedblleft specific\textquotedblright $SBM(\alpha^+)$ flow introduced by Burdzy-Kaspi  and excursion theory to construct all solutions of $(E)$. Unicity of solutions is proved in Section 4. Section 5 is an appendix devoted to Freidlin-Sheu formula stated in \cite{MR1743769} for a general class of diffusion processes defined by means of their generators. Here we first establish this formula using simple arguments and then deduce the characterization of Walsh Brownian motion by means of its generator (Proposition \ref{vi}).
\section{Stochastic flows and Walsh Brownian motion.}
Let $\mathcal{P}(G)$ be the space of probability measures on $G$ and $(f_n)_{n\in\N}$ be a sequence of functions dense in $\{f\in C_0(G), ||f||_{\infty}\leq 1\}$ with $C_0(G)$ being the space of continuous functions on $G$ which vanish at infinity. We equip $\mathcal{P}(G)$ with the distance $d(\mu,\nu)=(\sum_{n}2^{-n}(\int f_nd\mu-\int f_nd\nu)^2)^{\frac{1}{2}}$ for all $\mu$ and $\nu$ in $\mathcal{P}(G)$. Thus, $\mathcal{P}(G)$ is a locally compact separable metric space. Let us recall that a kernel $K$ on $G$ is a measurable mapping from $G$ into $\mathcal{P}(G)$. We denote by $E$ the space of all kernels on $G$ and we equip $E$ with the $\sigma$-field $\mathcal{E}$ generated by the mappings $K\longmapsto\mu K, \mu\in\mathcal{P}(G)$, with $\mu K$ the probability measure defined as $\mu K(A)=\int_{G}K(x,A)\mu (dx)$ for every $\mu\in\mathcal{P}(G)$. Let us recall some fundamental definitions from \cite{MR2060298}.
\subsection{Stochastic flows.}
Let $(\Omega,\mathcal{A},\mathbb P)$ be a probability space.
\begin{defin}{(Stochastic flow of kernels.)}\label{ghazel}
A family of $(E,\mathcal{E})$-valued random variables $(K_{s,t})_{s\leq t}$ is called a stochastic flow of kernels on $G$ if, $\forall s\leqslant t$ the mapping
$$\begin{array}{ccrcc}
K_{s,t} & : &{(G\times \Omega ,\mathcal{B}(G)\otimes \mathcal{A})}& \longrightarrow & {(\mathcal{P}(G),\mathcal{B}(\mathcal{P}(G)))} \\
& &(x,\omega)&\longmapsto & K_{s,t}(x,\omega)\\
\end{array}$$
 is measurable and if it satisfies the following properties:
\begin{enumerate}
\item $\forall s< t< u ,x\in G\ \ a.s. \ \ \forall f\in C_0(G),K_{s,u}f(x)=K_{s,t}(K_{t,u}f)(x)$ (flow property).
\item$\forall s\leqslant t$ the law of $K_{s,t}$ only depends on $t-s$.
\item For all $t_1< t_2<\cdots<t_n$, the family $\{K_{t_i,t_{i+1}},1\leq i \leq n-1\}$ is independent.
\item $\forall t\geq 0,x\in G,f\in C_0(G)$, $\lim\limits_{\substack{y \to x}} E[(K_{0,t}f(x)-K_{0,t}f(y))^2]=0$.
\item $\forall t\geq 0,f\in C_0(G)$, $\lim\limits_{\substack{x \to +\infty}}E[(K_{0,t}f(x))^2]=0.$
\item  $\forall x\in G,f\in C_0(G)$, $\lim\limits_{\substack{t \to 0+}} E[(K_{0,t}f(x)-f(x))^2]=0.$
\end{enumerate}
\end{defin}
\begin{defin}{(Stochastic flow of mappings.)}\label{york}
A family $(\varphi_{s,t})_{s\leq t}$ of measurable mappings from $G$ into $G$ is called a stochastic flow of mappings on $G$ if $K_{s,t}(x):=\delta_{\varphi_{s,t}(x)}$ is a stochastic flow of kernels on $G$. 
\end{defin}
\begin{rem} Let $K$ be a stochastic flow of kernels on $G$ and set $P^n_t=E[K_{0,t}^{\otimes n}], n\geq 1$. Then, $(P^n)_{n\geq 1}$ is a compatible family of Feller semigroups acting respectively on $C_0(G^n)$ (see Proposition 2.2 \cite{MR2060298}).
\end{rem}

\subsection{Walsh Brownian motion.}
Recall that for all $f\in C_0(G)$, $f_i$ is defined on $\R_+$. From now on, \textbf{we extend this definition on $\R$ by setting $f_i=0$ on $]-\infty,0[$}.  We will introduce Walsh Brownian motion $W(\alpha_1,\cdots,\alpha_N)$, by giving its transition density as defined in \cite{MR1022917}. On $C_0(G)$, consider
$$P_{t}f(h\vec{e}_j)=2\sum_ {i=1}^{N}\alpha_i p_tf_i(-h)+p_tf_j(h)-p_{t}f_j(-h), h>0,\ \ P_{t}f(0)=2\displaystyle\sum_ {i=1}^{N}\alpha_i p_tf_i(0).$$
where $(p_{t})_{t>0}$ is the heat kernel of the standard one dimensional Brownian motion. Then $(P_{t})_{t\geq 0}$ is a Feller semigroup on $C_0(G)$. A strong Markov process $Z$ with state space $G$ and semigroup $P_t$, and such that $Z$ is c\`adl\`ag
is by definition the Walsh Brownian motion $W(\alpha_1,\cdots,\alpha_N)$ on $G$. 
\subsubsection{Construction by flipping Brownian excursions.}\label{enri}
For all $n\geq0$, let $\mathbb D_n=\{\frac{k}{2^n},\ k\in\N\}$ and $\mathbb D=\cup_{n\in\N}\mathbb D_n$. For $0\leq u<v$, define $n(u,v)=\inf\{n\in\N : \mathbb D_n\cap ]u,v[\neq\emptyset\}$ and $f(u,v)=\inf \mathbb D_{n(u,v)}\cap]u,v[$.\\
Let $B$ be a standard Brownian motion defined on $(\Omega ,\mathcal A,\mathbb P)$ and $(\vec\gamma_r, r\in \mathbb D)$ be a sequence of independent random variables with the same law $\displaystyle\sum_{i=1}^{N}{\alpha_i}\delta_{\vec{e}_{i}}$ which is also independent of $B$. We define $$B^+_t=B_t-\displaystyle\min_{u\in[0,t]}B_u,\  g_t=\sup\{r\leq t : B^+_r=0\},\  d_t=\inf\{r\geq t : B^+_r=0\},$$
and finally $Z_t=\vec\gamma_r B^+_t, r=f(g_t,d_t)\ \textrm{if}\ B^+_t>0,\ \ Z_t=0\ \ \textrm{if}\ \ B^+_t=0$.
Then we have the following
\begin{prop}
 $(Z_t, t\geq0)$ is an $W(\alpha_1,\cdots,\alpha_N)$ on $G$ started at 0.
\end{prop}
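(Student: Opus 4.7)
The plan is to verify that $(Z_t)_{t \ge 0}$ is a continuous strong Markov process started at $0$ whose transition semigroup is the Feller semigroup $(P_t)$ defined above; this is exactly the definition of $W(\alpha_1,\dots,\alpha_N)$. Continuity and the initial condition are immediate from the construction: on any excursion interval $(g_t,d_t)$ of $B^+$, the label $f(g_t,d_t)$ is constant in $t$, so $Z_u = \vec\gamma_{f(g_t,d_t)}\, B^+_u$ is continuous on $(g_t,d_t)$ and matches the convention $Z=0$ at the endpoints where $B^+=0$; and $B^+_0=0$ gives $Z_0=0$.

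The core step is the Markov identity $E[f(Z_{s+t}) \mid \mathcal F_s] = P_t f(Z_s)$ for every $f \in C_0(G)$ and $s,t \ge 0$, where $\mathcal F_s := \sigma(Z_u : u \le s)$. Two facts are used throughout: L\'evy's theorem, which identifies $B^+$ with a reflecting Brownian motion and hence provides its strong Markov property, and the independence of the i.i.d.\ family $(\vec\gamma_r)_{r \in \mathbb D}$ (with common law $\sum_i \alpha_i \delta_{\vec e_i}$) from $B$. When $Z_s = 0$, i.e.\ $B^+_s = 0$, the strong Markov property of $B^+$ at $s$, combined with the observation that every excursion of $B^+$ reaching beyond time $s$ has left endpoint $g \ge s$ (so its label $f(g,d)$ lies in $[s,\infty)\cap\mathbb D$, disjoint from the labels used by earlier excursions), reduces the computation to $E[f(Z_t)]$ starting from $0$. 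Using L\'evy's identity $B^+_t \stackrel{\mathrm{law}}{=} |B_t|$ and conditioning on an independent direction $\vec\gamma$ of law $\sum_i \alpha_i \delta_{\vec e_i}$ yields $E[f(Z_t)] = \sum_i \alpha_i E[f_i(|B_t|)] = 2\sum_i \alpha_i p_t f_i(0) = P_t f(0)$.

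When $Z_s = h \vec e_j$ with $h > 0$, set $\tau := \inf\{u \ge 0 : B^+_{s+u} = 0\} = d_s - s$. On $\{t < \tau\}$ the current excursion is still ongoing, so the direction $\vec\gamma_{f(g_s,d_s)} = \vec e_j$ (readable from $Z_s$) remains active and $Z_{s+t} = \vec e_j\, B^+_{s+t}$; the reflection principle applied to $B^+_{s+\cdot}$ (a Brownian motion started at $h$ and killed at $0$) gives the contribution $p_t f_j(h) - p_t f_j(-h)$. On $\{t \ge \tau\}$ a fresh excursion has begun, with direction $\vec\gamma_r$ indexed by some $r \in (d_s,\infty) \cap \mathbb D$ and hence independent of $\mathcal F_s$; using that the density of $B^+_{s+t}$ on $\{\tau \le t\}$ started from $h$ equals $2 p_t(h+y)$ for $y > 0$, this contribution equals $2 \sum_i \alpha_i \int_0^\infty p_t(h+y) f_i(y)\, dy = 2 \sum_i \alpha_i p_t f_i(-h)$, using the convention $f_i \equiv 0$ on $(-\infty,0)$. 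Summing the two pieces reproduces $P_t f(h \vec e_j)$ exactly.

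The main obstacle is the measurability and independence bookkeeping: one must verify that the flipping variables attached to excursions strictly after $d_s$ are independent of $\mathcal F_s$. This reduces to the fact that distinct excursion intervals of $B^+$ carry distinct dyadic labels $f(g,d)$, so that conditionally on $B$ the variables $(\vec\gamma_{f(g,d)})$ attached to disjoint excursion intervals are independent, and those associated with post-$d_s$ intervals remain independent of all the information in $\mathcal F_s$.
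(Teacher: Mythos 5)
Your proposal is correct and follows essentially the same route as the paper: both verify the Markov identity $E[f(Z_{s+t})\mid\mathcal F_s]=P_tf(Z_s)$ by splitting according to whether the excursion of $B^+$ straddling time $s+t$ began before or after $s$, handling the first piece by the reflection principle for the killed Brownian motion and the second via the independence of the fresh direction variable together with the law of $B^+$ after its first zero past $s$. The only cosmetic difference is that the paper carries out both cases in a single computation (conditioning on $\mathcal F_s$ and using the explicit joint density of $(\min_{r\in[s,t]}B_{s,r},B_{s,t})$ to justify the density $2p_{t-s}(h+y)$ you assert), whereas you separate the cases $Z_s=0$ and $Z_s=h\vec e_j$, $h>0$.
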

\begin{proof}
 We use these notations $$min_{s,t}=\displaystyle\min_{u\in[s,t]}B_u,\ \vec{e}_{0,t}=\vec{e}(Z_t), \mathcal F_s=\sigma(\vec{e}_{0,u},B_u;0\leq u\leq s).$$
Fix $0\leq s<t$ and denote by $E_{s,t}=\{min_{0,s}=min_{0,t}\} (=\{g_t\leq s\}\ a.s.)$. Let $h : G\longrightarrow{\R}$ be a bounded measurable function. Then

$$E[h(Z_t)|\mathcal F_s]=E[h(Z_t)1_{E_{s,t}}|\mathcal F_s]
+E[h(Z_t)1_{E_{s,t}^{c}}|\mathcal F_s],$$
with
$$E[h(Z_t)1_{E_{s,t}^{c}}|\mathcal F_s]=\displaystyle{\sum_{i=1}^{N}}E[h_i(B^+_{t})1_{\{g_t>s,\vec{e}_{0,t}=\vec{e}_i\}}|\mathcal F_s]=\displaystyle{\sum_{i=1}^{N}}\alpha_iE[h_i(B^+_{t})1_{\{g_t>s\}}|\mathcal F_s].$$

If $B_{s,r}=B_r-B_s$, then the density of $(\underset{r\in[s,t]}{\min}B_{s,r},B_{s,t})$ with respect to the Lebesgue measure is given by
$$g(x,y)=\frac{2}{\sqrt{2\pi(t-s)^{3}}}(-2x+y)\exp(\frac{-(-2x+y)^2}{2(t-s)})1_{\{y>x,x<0\}}\ \textrm{(\cite{MR1100000} page 28)}.$$
Since $(B_{s,r}, r\geq s)$ is independent of $\mathcal F_s$, we get
\begin{eqnarray}
E[h_i(B^+_t)1_{\{g_t>s\}}|\mathcal F_s]&=&E[h_{i}(B_{s,t}-\underset{r\in[s,t]}{\min}{B_{s,r}})1_{\{\underset{r\in[0,s]}{\min}{B_{s,r}}>\underset{r\in[s,t]}{\min}{B_{s,r}}\}}|\mathcal F_s]\nonumber\\
&=&\int_{{\R}}1_{\{-B_s^+>x\}}(\int_{{\R}}h_i(y-x)g(x,y)dy)dx\nonumber\\
&=&2\int_{{\R_+}}h_i(u)p_{t-s}(B_s^+,-u)du\ \ (u=y-x)\nonumber\
\end{eqnarray}
and so $E[h(Z_t)1_{E_{s,t}^{c}}|\mathcal F_s]
=2\displaystyle{\sum_{i=1}^{N}}\alpha_ip_{t-s}h_i(-B^+_s)$. On the other hand

$$E[h(Z_t)1_{E_{s,t}}|\mathcal F_s]=E[h(\vec{e}_{0,s}(B_t-min_{0,s}))1_{E_{s,t}\cap (B_t>min_{0,s})}|\mathcal F_s]$$
$$=E[h(\vec{e}_{0,s}(B_t-min_{0,s}))1_{\{B_t>min_{0,s}\}}|\mathcal F_s]-E[h(\vec{e}_{0,s}(B_t-min_{0,s}))1_{E_{s,t}^{c}\cap(B_t>min_{0,s})}|\mathcal F_s].$$
Obviously on $\{\vec{e}_{0,s}=\vec{e}_k\}$, we have
\begin{eqnarray}
 E[h(\vec{e}_{0,s}(B_t-min_{0,s}))1_{\{B_t>min_{0,s}\}}|\mathcal F_s]&=&E[h_k(B_{s,t}+B_s^{+})1_{\{B_{s,t}+B_s^{+}>0\}}|\mathcal F_s]\nonumber\\
&=&p_{t-s}h_k(B_s^+)\nonumber\
\end{eqnarray}

and

$$E[h(\vec{e}_{0,s}(B_t-min_{0,s}))1_{E_{s,t}^{c}\cap(B_t>min_{0,s})}|\mathcal F_s]=E[h_k(B_{s,t}+B_s^{+})1_{\{-B_s^{+}>\underset{r\in[s,t]}{\min}B_{s,r},B_{s,t}+B_s^{+}>0\}}|\mathcal F_s]$$
$$=\int_{{\R}}h_k(y+B_s^{+})
1_{\{y+B_s^+>0\}}\left (\int_{\R}1_{\{-B_s^+>x\}}g(x,y)dx\right) dy
=p_{t-s}h_k(-B_s^+).$$
As a result, $E[h(Z_t)|\mathcal F_s]=P_{t-s}h(Z_s)$ where $P$ is the semigroup of $W(\alpha_{1},\cdots,\alpha_N)$.
\end{proof}
\begin{prop}\label{9awed}Let $M = (M_n)_{n\geq 0}$ be a Markov chain on $G$ started at $0$ with stochastic matrix $Q$ given by
\begin{equation}\label{sirine}
Q(0,\vec{e}_{i})=\alpha_i,\ \ Q(n\vec{e}_{i},(n+1)\vec{e}_{i})=Q(n\vec{e}_{i},(n-1)\vec{e}_{i})=\frac{1}{2}\ \ \forall i\in [1,N],\ \ n\geq 1.
\end{equation}
Then, for all $0\leq t_1<\cdots<t_p$, we have
 $$(\frac{1}{2^n}M_{\lfloor 2^{2n}{t_1}\rfloor},\cdots,\frac{1}{2^n}M_{\lfloor 2^{2n}{t_p}\rfloor})\xrightarrow[\text{$n\rightarrow +\infty $}]{\text{law}}(Z_{t_1},\cdots,Z_{t_p}),$$
where $Z$ is an $W(\alpha_1,\cdots,\alpha_N)$ started at $0$.
\end{prop}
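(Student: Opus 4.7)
The plan is to realize both the Markov chain $M$ and the Walsh Brownian motion $Z$ as a scalar integer-valued random walk (respectively a standard Brownian motion) decorated by i.i.d.\ ray assignments, and then to invoke Donsker's invariance principle. For the realization: let $(S_k)_{k\geq 0}$ be a simple symmetric random walk on $\Z$ started at $0$, and let $(\vec\eta_\ell)_{\ell\geq 1}$ be an i.i.d.\ sequence with common law $\sum_{i=1}^N\alpha_i\delta_{\vec e_i}$, independent of $S$. Number the successive excursions of $S$ away from $0$ by $\ell=1,2,\ldots$; when step $k$ falls in the $\ell(k)$-th excursion, set $\tilde M_k := \vec\eta_{\ell(k)}|S_k|$, and $\tilde M_k := 0$ when $S_k=0$. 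A one-step check shows $\tilde M$ has transition matrix $Q$ from (\ref{sirine}), so $\tilde M\stackrel{\text{law}}{=}M$ and one may replace $M$ by $\tilde M$. A parallel remark applies on the Brownian side: by the L\'evy identity relating $|B|$ and $B^+ = B - \min_{[0,\cdot]}B$, the construction of Section 2.2.1 has the same law as $Z'_u := \vec\gamma_{\,\hat r(u)}|B_u|$, where $\hat r(u)$ is the first dyadic rational in the excursion interval of $|B|$ away from $0$ containing $u$.

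By Donsker and the Skorokhod representation theorem we may assume $S_{\lfloor 2^{2n}\cdot\rfloor}/2^n \to B$ uniformly on compacts, almost surely. Fix $0<t_1<\cdots<t_p$, set $T_j^n:=\lfloor 2^{2n}t_j\rfloor$, and define the discrete partition $\pi^n$ of $\{1,\ldots,p\}$ by $i\sim^n j$ iff $S_k\neq 0$ for every $k$ between $T_i^n$ and $T_j^n$, and the Brownian partition $\pi^B$ by $i\sim^B j$ iff $B_u\neq 0$ on $[t_i,t_j]$. Then $(|S_{T_j^n}|/2^n)_j\to(|B_{t_j}|)_j$ almost surely. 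Moreover $\pi^n=\pi^B$ eventually, almost surely: if $i\sim^B j$ then $\inf_{[t_i,t_j]}|B|>0$ and, by uniform convergence, the integer $|S_k|$ is $\geq 1$ on the corresponding discrete interval; if $i\not\sim^B j$, then almost surely $B$ vanishes at some stopping time $\tau\in(t_i,t_j)$, and the strong Markov property at $\tau$ ensures that $B$ takes both strictly positive and strictly negative values in every right neighborhood of $\tau$, so by uniform convergence $S$ changes sign inside $[T_i^n,T_j^n]$ and must pass through $0$ there.

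Conditionally on $S$ and on $\pi^n$, the joint law of $(M_{T_j^n}/2^n)_j$ factorises over the classes of $\pi^n$: each class $C$ receives a single ray drawn from $\sum_i\alpha_i\delta_{\vec e_i}$, and the components indexed by $j\in C$ equal this ray times the respective moduli, independently across classes. The identical decomposition holds for $(Z'_{t_j})_j$ over $\pi^B$ via the dyadic family $(\vec\gamma_r)$. Since both ray families are i.i.d.\ with the same law and independent of the scalar processes, once the excursion partitions are identified ($\pi^n=\pi^B$) the conditional laws match; combining with the a.s.\ convergence of the moduli yields finite-dimensional convergence of $(M_{T_j^n}/2^n)_j$ to $(Z'_{t_j})_j\stackrel{\text{law}}{=}(Z_{t_j})_j$. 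The main technical point is the asymptotic matching of the excursion partitions, which rests on the sample-path oscillation of Brownian motion at its zeros.
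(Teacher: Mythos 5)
Your argument is correct, but it is genuinely different from the one in the paper. The paper's proof is an exact embedding: with $Z$ built from the reflected Brownian motion $B^+$ as in Section 2.2.1, it introduces the successive times $T^n_k$ at which $|Z|=B^+$ has moved by $2^{-n}$ and observes that the chain $Z^n_k=2^nZ_{T^n_k}$ has \emph{exactly} the law of $M$ for every $n$ (a fresh ray is drawn each time the radial part leaves $0$, with law $\sum_i\alpha_i\delta_{\vec e_i}$); since $T^n_{\lfloor 2^{2n}t\rfloor}\to t$ a.s.\ uniformly on compacts and $Z$ is continuous, one gets $\frac{1}{2^n}Z^n_{\lfloor 2^{2n}t\rfloor}\to Z_t$ a.s., hence convergence in law with no further work. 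You instead realize $M$ externally as a decorated simple random walk, invoke Donsker plus Skorokhod representation for the radial parts, and then must separately prove that the excursion partitions $\pi^n$ and $\pi^B$ of the index set agree eventually — which is the only delicate point of your route, and your oscillation argument at the zeros of $B$ (strong Markov at the first zero after $t_i$, forcing a sign change of $S$ and hence a visit to $0$) handles it correctly; a.s.\ the fixed times $t_j$ are not zeros of $B$, so the conditional-law factorization over excursion classes goes through. What the paper's approach buys is brevity and a stronger conclusion (a.s.\ convergence under a single coupling), at the price of quoting the a.s.\ convergence of the embedding times; what yours buys is independence from that embedding fact and a more transparent ``scalar invariance principle plus i.i.d.\ ray decorations'' structure, at the price of the partition-matching lemma. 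One cosmetic remark: you assume $0<t_1$, whereas the statement allows $t_1=0$; that case is trivial since both processes start at $0$, but it is worth a sentence.
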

\begin{proof}
 Let $B$ be a standard Brownian motion and define for all $n\geq1$ : $T_0^n(B)=T_0^n(|B|)=0$ and for $k\geq0$
\begin{eqnarray}
T_{k+1}^n(B)&=&\inf\{r\geq T_{k}^n(B),|B_r-B_{T_{k}^n}|=\frac{1}{2^n}\},\nonumber\\
T_{k+1}^n(|B|)&=&\inf\{r\geq T_{k}^n(|B|),||B_r|-|B_{T_{k}^n}||=\frac{1}{2^n}\}.\nonumber\
\end{eqnarray}
Then, clearly $T_{k}^n(B)=T_{k}^n(|B|)$ and so $(T_{k}^n(|B|))_{k\geq0}\overset{law}{=}(T_{k}^n(B))_{k\geq0}$. It is known (\cite{MR0000000} page 31) that $\lim\limits_{\substack{n \to +\infty}}T_{\lfloor 2^{2n}t\rfloor}^n(B)=t$ a.s. uniformly on compact sets. Then, the result holds also for $T_{\lfloor 2^{2n}t\rfloor}^n(|B|)$. Now, let $Z$ be the $W(\alpha_1,\cdots,\alpha_N)$ started at $0$ constructed in the beginning of this section from the reflected Brownian motion $B^+$. Let $T^n_k=T^n_k(B^+)$ (defined analogously to $T^n_k(|B|)$) and $Z^n_k=2^nZ_{T^n_k}$. Then obviously $(Z^n_k, k\geq0)\overset{law}{=}M$ for all $n\geq0$. Since a.s. $t\longrightarrow Z_t$ is continuous, it comes that a.s. $\forall t\geq0, \lim_{n\rightarrow+\infty}\frac{1}{2^n}Z^n_{\lfloor 2^{2n}{t}\rfloor}=Z_t$.
\end{proof}

\subsubsection{Freidlin-Sheu formula.}
\begin{thm}\cite{MR1743769}\label{palestine}
Let  $(Z_{t})_{t\geq 0}$ be a $W(\alpha_1,\cdots,\alpha_N)$ on $G$ started at $z$ and let $X_t=|Z_t|$. Then\\
(i) \;$(X_{t})_{t\geq 0}$ is a reflecting Brownian motion started at $|z|$.  \\
(ii) \;$B_t=X_t-\tilde L_t(X)-|z|$ is a standard Brownian motion where$$\tilde L_t(X)=\displaystyle{\lim_{\varepsilon \rightarrow 0^+ }}\frac{1}{2\varepsilon }\int_0^t 1_{\{|X_u|\leq \varepsilon\}}du.$$
(iii) \;$\forall f\in C_b^2(G^*)$,
\begin{equation}\label{Najia}
f(Z_t)=f(z)+\int_{0}^{t}f'(Z_s)dB_s+\frac{1}{2}\int_{0}^{t}f''(Z_s)ds+(\sum_ {i=1}^{N}\alpha_if_{i}'(0+))\tilde L_t(X).
\end{equation}
 \end{thm}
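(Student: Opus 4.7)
The plan proceeds by handling the three claims in sequence, with part (iii) carrying the essential content.

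For part (i), a direct semigroup computation suffices: if $f\in C_0(G)$ depends only on $|z|$, so that $f_1=\cdots=f_N=g$ for some $g\in C_0(\R_+)$, then using $\sum_i\alpha_i=1$ the formula for $P_tf$ collapses to
\[
P_tf(h\vec{e}_j)=2\Big(\sum_i\alpha_i\Big)p_tg(-h)+p_tg(h)-p_tg(-h)=p_tg(h)+p_tg(-h),
\]
which is the semigroup of reflecting Brownian motion applied to $g$. Hence $X=|Z|$ is a reflecting Brownian motion started at $|z|$. Part (ii) is then the classical Tanaka--Skorokhod decomposition of reflecting Brownian motion: the symmetric local time $\tilde L_t(X)$ coincides with the Skorokhod boundary-pushing term, and $B_t:=X_t-|z|-\tilde L_t(X)$ is a standard Brownian motion.

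For part (iii), I plan to use the discrete approximation of Proposition~\ref{9awed}. With $M^n_t:=2^{-n}M_{\lfloor 2^{2n}t\rfloor}$, a Taylor expansion of $f$ combined with telescoping gives
\[
f(M^n_t)-f(M^n_0)=S^n_t+T^n_t+r^n_t,
\]
where $S^n_t$ collects the increments at steps with $M_{k-1}\neq 0$ (on which $M$ stays on a single ray $D_i$ and $f$ coincides with $f_i$), $T^n_t$ those at steps leaving $0$, and $r^n_t$ is a higher-order remainder. The first-order part of $S^n_t$ is a discrete martingale whose quadratic variation converges to $\int_0^t(f'(Z_s))^2\,ds$; by the martingale central limit theorem it converges to $\int_0^tf'(Z_s)\,dB_s$. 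The second-order Taylor contribution in $S^n_t$ is a Riemann sum converging to $\tfrac12\int_0^tf''(Z_s)\,ds$. For $T^n_t$, the conditional expected increment at a visit to $0$ equals $2^{-n}\sum_i\alpha_if_i'(0+)+O(2^{-2n})$, while the scaled visit count $2^{-n}\#\{k<2^{2n}t:M_k=0\}$ converges in probability to $\tilde L_t(X)$ by a Donsker-type theorem for the local time of the reflecting random walk $|M|$. Combining these, $T^n_t$ converges to $\bigl(\sum_i\alpha_if_i'(0+)\bigr)\tilde L_t(X)$, and passing to the limit yields~\eqref{Najia}.

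The main obstacle is to identify the limiting Brownian motion as \emph{exactly} the $B$ of part (ii), rather than some other Brownian motion with the right law. For this I would use that $X^n=|M^n|$ decomposes as its martingale part plus its scaled visit count at $0$; invoking part (ii) together with the convergence of the visit count to $\tilde L_t(X)$, the martingale part is forced to converge to $B$. A joint convergence argument, using that $f'(Z^n_{k-1})$ is an adapted functional of the same walk, then identifies the limit of the first-order part of $S^n_t$ as the stochastic integral against this $B$. Finally, $r^n_t$ vanishes in $L^2$ thanks to the uniform bounds on $f_i',f_i''$ and the existence of finite one-sided limits at $0+$ guaranteed by $f\in C_b^2(G^*)$.
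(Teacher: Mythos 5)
Your proposal is correct in outline but takes a genuinely different route from the paper. The paper proves (i) by the semigroup identity $P_t(f\circ\Phi)=Q_tf\circ\Phi$ (essentially your computation) and (ii) by Tanaka's formula; for (iii) it works entirely in continuous time: it decomposes $f(Z_t)-f(0)$ over the upcrossing/downcrossing times $\theta_n^\delta,\tau_n^\delta$ of the level $\delta$, kills the boundary error term, identifies the local-time contribution by noting that $Z^i_t=|Z_t|1_{\{Z_t\in D_i\}}-|Z_t|1_{\{Z_t\notin D_i\}}$ is an $SBM(\alpha_i)$ so that $\delta$ times the number of upcrossings of $Z^i$ converges to $\tfrac12 L_t(Z^i)=\alpha_i\tilde L_t(X)$, and handles the away-from-zero part by applying It\^o's formula to $C^2$ extensions $\tilde f_i$ of the $f_i$ together with dominated convergence for stochastic integrals. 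This route never leaves the given filtration, so the identification of $B$ and of the stochastic integral is automatic. Your discrete route via the random walk of Proposition \ref{9awed} buys a more elementary, Donsker-flavoured argument (Taylor expansion plus a local-time limit theorem), but it concentrates all the difficulty exactly where you say it does, and two points need more than you give them. First, the statement of Proposition \ref{9awed} is only convergence of finite-dimensional laws; to get an a.s. identity for the \emph{given} $Z$ and the \emph{given} $B$ of part (ii) you must use the embedding $M^n_k=2^nZ_{T^n_k}$ from its proof, which upgrades everything to convergence in probability for functionals of the original $Z$ — without that coupling your argument only yields the formula in distribution. Second, "a joint convergence argument identifies the limit of the first-order part as $\int_0^t f'(Z_s)\,dB_s$" is the real work: convergence of discrete stochastic integrals requires either a Kurtz--Protter/UT-type condition on the approximating martingales or a direct $L^2$ computation along the embedding times; the martingale CLT alone does not deliver the integral against the specific $B$. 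You should also track the normalization of the visit count against the \emph{symmetric} local time (the paper's preliminary remark $L_t=2\alpha\tilde L_t$ does this bookkeeping); your constant happens to be right, but it is not free. With those two steps made precise your plan goes through, at the cost of machinery the paper's excursion decomposition avoids.
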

\begin{rems}\label{Dominique}
(1) By taking $f(z)=|z|$ and applying Skorokhod lemma, we find the following analogous of (\ref{3alim}),$$|Z_t|=|z|+B_t-\min_{s\leq u\leq t}[(|z|+B_u)\wedge0].$$
 From this observation, when $\varepsilon_i=1$ for all $i\in [1,N]$, we call $(E)$, Tanaka's SDE related to $W(\alpha_1,\cdots,\alpha_N)$.\\
(2) For $N\geq3$, the filtration $(\mathcal F_t^Z)$ has the martingale representation property with respect to $B$ \cite{MR1022917}, but there is no Brownian motion $W$ such that $\mathcal F_t^Z=\mathcal F_t^W$ \cite{MR1487755}. 
 \end{rems}

\vspace{0.05cm}
\noindent Using this theorem, we obtain the following characterization of $W(\alpha_1,\cdots,\alpha_N)$ by means of its semigroup.
\begin{prop}\label{vi}
Let\\
$\bullet$ $D(\alpha_1,\cdots,\alpha_N)=\{f\in C_b^2(G^*) : \displaystyle\sum_{i=1}^{N}\alpha_i f_i'(0+)=0\}$.\\
$\bullet$ $Q=(Q_t)_{t\geq 0}$ be a Feller semigroup satisfying
$$Q_tf(x)=f(x)+\frac{1}{2} \int_ {0}^{t} Q_uf''(x)du \ \ \forall f\in D(\alpha_1,\cdots,\alpha_N).$$
Then, $Q$ is the semigroup of  $W(\alpha_1,\cdots,\alpha_N)$.
 \end{prop}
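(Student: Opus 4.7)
The plan is to identify $Q$ with the Walsh Brownian motion semigroup $P:=(P_t)_{t\ge 0}$ by matching their resolvents. First, by Theorem~\ref{palestine}(iii), for $f\in D(\alpha_1,\dots,\alpha_N)$ the Kirchhoff condition $\sum_i \alpha_i f_i'(0+)=0$ kills the local time term, and taking expectations kills the It\^o integral; hence $P$ itself satisfies the hypothesis
\[
P_tf(x)=f(x)+\tfrac12\int_0^t P_u f''(x)\,du.
\]
So it suffices to show that the semigroup equation in the statement admits at most one Feller solution.

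Next I would apply a Laplace transform in $t$. Since $f$ is bounded (with $f''$ bounded Borel, so that $P_u f''$ and $Q_u f''$ make sense via the underlying kernels), a routine Fubini gives, for every $\lambda>0$,
\[
R_\lambda^Q\bigl(\lambda f-\tfrac12 f''\bigr)=f=R_\lambda^P\bigl(\lambda f-\tfrac12 f''\bigr),\qquad R_\lambda^S g:=\int_0^\infty e^{-\lambda t}S_t g\,dt.
\]
Thus $R_\lambda^Q$ and $R_\lambda^P$ agree on the image
$I_\lambda:=(\lambda-\tfrac12\partial^2)\bigl(D(\alpha_1,\dots,\alpha_N)\cap C_0(G)\bigr).$

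The crux is to show that $I_\lambda$ is dense in $C_0(G)$. Given $g\in C_c(G)$, I would solve, on each ray $D_i\simeq\R_+$, the ODE $\lambda f_i-\tfrac12 f_i''=g_i$ with $f_i(+\infty)=0$; variation of constants yields
\[
f_i(h)=c_i\,e^{-\sqrt{2\lambda}\,h}+\phi_i(h),
\]
with $\phi_i$ an explicit particular solution decaying at infinity. The $N+1$ unknowns $c_1,\dots,c_N$ and the common value $a:=f(0)$ are determined by the $N$ continuity conditions $c_i+\phi_i(0)=a$ together with the single Kirchhoff relation $\sum_i \alpha_i f_i'(0+)=0$. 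This $(N{+}1)\times(N{+}1)$ linear system is non-singular for $\lambda>0$ (its determinant reduces to a nonzero multiple of $\sqrt{2\lambda}\sum_i\alpha_i$), and the resulting $f$ lies in $D(\alpha_1,\dots,\alpha_N)\cap C_0(G)$. Hence $I_\lambda\supset C_c(G)$, a dense subspace of $C_0(G)$.

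Since both resolvents are bounded on $C_0(G)$ and coincide on a dense set, $R_\lambda^Q=R_\lambda^P$ on $C_0(G)$ for every $\lambda>0$; by the uniqueness of the Feller semigroup associated with its resolvent (Hille--Yosida), $Q=P$. The only genuinely delicate step is the invertibility of the linear system at the vertex and the verification that the assembled $f$ actually satisfies $f\in C_0(G)$ together with the Kirchhoff condition; everything else is routine bookkeeping.
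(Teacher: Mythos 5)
Your proof is correct, and it shares the paper's high-level strategy---identify $Q$ with the Walsh semigroup $P$ by matching resolvents, with the Freidlin--Sheu formula (Theorem \ref{palestine}) supplying the fact that $P$ is governed by $\frac{1}{2}f''$ on the Kirchhoff domain---but the crucial density step is handled by a genuinely different mechanism. The paper works with the dense \emph{domain} $D'(\alpha_1,\cdots,\alpha_N)=\{f\in C_0(G)\cap D(\alpha_1,\cdots,\alpha_N):f''\in C_0(G)\}$: it first proves the regularization property $P_t(C_0(G))\subset D'(\alpha_1,\cdots,\alpha_N)$ by direct computation with the explicit transition density, uses this both to get density of $D'(\alpha_1,\cdots,\alpha_N)$ in $C_0(G)$ and to place $R'_\lambda f$ inside $D(A)$, and then concludes $R_\lambda f=R'_\lambda f$ from the uniqueness of the solution of $(\lambda I-A)u=f$ in $D(A)$. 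You instead work with the \emph{range}: you show that $(\lambda-\frac{1}{2}\partial^2)\bigl(D(\alpha_1,\cdots,\alpha_N)\cap C_0(G)\bigr)$ contains all compactly supported continuous functions by solving the boundary-value problem on each ray and checking that the $(N+1)\times(N+1)$ system coupling the constants $c_i$ and the common value $f(0)$ through continuity and the Kirchhoff relation is non-singular (its determinant is a nonzero multiple of $\sqrt{2\lambda}\sum_i\alpha_i=\sqrt{2\lambda}$). Your route buys independence from any smoothing property of $P_t$---the only probabilistic input is Theorem \ref{palestine} and the Feller property---at the price of the explicit ODE solve at the vertex; the paper's route avoids that linear algebra but leans on its claim (i) about $P_t(C_0(G))$, which it only sketches. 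One small point to record in your write-up: for $f\in D(\alpha_1,\cdots,\alpha_N)$ the second derivative $f''$ need not be continuous at $0$ (the limits $f''_i(0+)$ may differ across rays), so $R^Q_\lambda f''$ must be read through the kernels of $Q_t$ acting on bounded Borel functions; this is harmless, since the functions $g$ on which you ultimately compare the two resolvents do lie in $C_0(G)$.
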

\noindent\begin{proof} Denote by $P$ the semigroup of $W(\alpha_1,\cdots,\alpha_N)$, $A'$ and $D(A')$ being respectively its generator and its domain on $C_0(G)$. If
\begin{equation}\label{wang}
 D'(\alpha_1,\cdots,\alpha_N)=\{ f\in C_0(G)\bigcap D(\alpha_1,\cdots,\alpha_N), f'' \in C_0(G)\},
\end{equation}
then it is enough to prove these statements:\\
(i)\ $\forall t> 0, \ \ \ P_t(C_0(G))\subset D'(\alpha_1,\cdots,\alpha_N)$.\\
(ii)\ $ D'(\alpha_1,\cdots,\alpha_N)\subset D(A')$\ \ and \ \ $A'f(x)=\frac{1}{2}f''(x)$ on $D'(\alpha_1,\cdots,\alpha_N)$.\\
(iii) $D'(\alpha_1,\cdots,\alpha_N)$ is dense in $C_0(G)$ for $||.||_{\infty}$. \\
(iv) If $R$ and $R'$ are respectively the resolvents of $Q$ and $P$, then
$$R_{\lambda}=R_{\lambda}^{'} \ \ \ \forall \ \ \lambda > 0 \ \ \textrm{on}\ \  D'(\alpha_1,\cdots,\alpha_N).$$
The proof of (i) is based on elementary calculations using dominated convergence, (ii) comes from (\ref{Najia}), (iii) is a consequence of (i) and the Feller property of $P$ (approximate $f$ by $P_{\frac{1}{n}}f$). To prove (iv),  let $A$ be the generator of $Q$ and fix $f\in D'(\alpha_1,\cdots,\alpha_N)$. Then, $R_{\lambda}f$ is the unique element of $D(A)$ such that $(\lambda I-A)(R_{\lambda}f)=f$. We have $R'_{\lambda}f\in D'(\alpha_1,\cdots,\alpha_N)$ by (i), $D'(\alpha_1,\cdots,\alpha_N)\subset D(A)$ by hypothesis. Hence $R'_{\lambda}f\in D(A)$ and since $A=A'$ on $D'(\alpha_1,\cdots,\alpha_N)$, we deduce that $R_{\lambda}f=R'_{\lambda}f$.
\end{proof}
\section{Construction of flows associated to $(E)$.}\label{houwa}
In this section, we prove $(a)$ of Theorem \ref{hajri} and show that $K^W$ given in Theorem \ref{ghasseni} solves $(E)$.
\subsection{Flow of Burdzy-Kaspi associated to SBM.}\label{mahmoud}
\subsubsection{Definition.}\label{laa}
We are looking for flows associated to the SDE (\ref{10}). The flow associated to $SBM(1)$ which solves $(\ref{10})$ is the reflected Brownian motion above $0$ given by
$$Y_{s,t}(x)=(x+W_{s,t})1_{\{t\leq \tau_{s,x}\}}+(W_{s,t}-\inf_{u\in[\tau_{s,x},t]}W_{s,u}) 1_{\{t>\tau_{s,x}\}},$$
where 
\begin{equation}\label{bn}
 \tau_{s,x}=\inf\{r\geq s : x+W_{s,r}=0\}.
\end{equation}
and a similar expression holds for the $SBM(0)$ which is the reflected Brownian motion below $0$. These flows satisfy all properties of the $SBM(\alpha), \alpha\in ]0,1[$ we will mention below such that the \textquotedblleft strong\textquotedblright flow property (Proposition \ref{jnoun}) and the strong comparison principle (\ref{fouda}). When $\alpha\in]0,1[$, we follow Burdzy-Kaspi \cite{MR2094439}. In the sequel, we will be interested in $SBM(\alpha^+)$ and so we suppose in this paragraph that $\alpha^+\notin\{0,1\}$.\\
 With probability 1, for all rationals $s$ and $x$ simultaneously, equation $(\ref{10})$ has a unique strong solution with $\alpha=\alpha^+$. Define$$Y_{s,t}(x)=\underset{\underset{u<s,x<X_{s}^{u,y}}{u,y\in {\Q}}}{\inf X_{t}^{u,y}},\ \ {L}_{s,t}(x)=\displaystyle{\lim_{\varepsilon \rightarrow 0^+ }}\frac{1}{2\varepsilon }\int_s^t 1_{\{|Y_{s,u}(x)|\leq \varepsilon\}}du .$$
Then, it is easy to see that a.s.
\begin{equation}\label{fouda}
Y_{s,t}(x)\leq Y_{s,t}(y)\ \ \forall s\leq t, x\leq y.
\end{equation}
This implies that $x\longmapsto Y_{s,t}(x)$ is increasing and c\`adl\`ag for all $s\leq t$ a.s.\\
According to \cite{MR2094439} (Proposition 1.1), $t\longmapsto Y_{s,t}(x)$ is H\"older continuous for all $s,x$ a.s. and with probability equal to $1$: $\forall s,x\in \R, Y_{s,\cdot}(x)\ \textrm{satisfies (\ref{10})}$. We first check that $Y$ is a flow of mappings and we start by the following flow property:
\begin{prop}\label{jnoun}
 $\forall\  t\geq s$ a.s.
$$Y_{s,u}(x)=Y_{t,u}(Y_{s,t}(x))\ \ \forall u\geq t, x\in\R\ .$$
\end{prop}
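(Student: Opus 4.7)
The plan is to deduce the flow property of $Y$ from the pathwise uniqueness of equation (\ref{10}) with $\alpha=\alpha^+$, combined with the monotonicity and right-continuity of $x\mapsto Y_{s,t}(x)$ coming from (\ref{fouda}). I would fix deterministic times $s\leq t\leq u$ and work on the full-measure event $\Omega_0$ on which, simultaneously for all rational $(v,y)$ with $v<s$, the process $X^{v,y}$ is the unique strong solution of (\ref{10}), the monotonicity $X^{v_1,y_1}_r\leq X^{v_2,y_2}_r$ (for $X^{v_1,y_1}_v\leq X^{v_2,y_2}_v$ at some reference time) holds, and $x\mapsto Y_{s,r}(x)$ is nondecreasing and right-continuous. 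The first step is then to observe that by pathwise uniqueness and the strong Markov property of the white noise $W$, for every rational $(v,y)$ with $v<s$, the restriction of $X^{v,y}$ to $[t,\infty)$ is the pathwise unique strong solution of (\ref{10}) started at $X^{v,y}_t$ at time $t$. In particular, if two rational pairs $(v_1,y_1)$ and $(v_2,y_2)$ satisfy $X^{v_1,y_1}_t=X^{v_2,y_2}_t$, then they coalesce: $X^{v_1,y_1}_r=X^{v_2,y_2}_r$ for all $r\geq t$.

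For the inequality $Y_{t,u}(Y_{s,t}(x))\leq Y_{s,u}(x)$, I would argue as follows. Let $(v,y)$ be rational with $v<s$ and $x<X^{v,y}_s$. By the definition of $Y_{s,t}(x)$ we have $Y_{s,t}(x)\leq X^{v,y}_t$. If the inequality is strict, then since $v<s<t$ and $y$ is rational, $(v,y)$ is an admissible pair in the infimum defining $Y_{t,u}(Y_{s,t}(x))$, so $Y_{t,u}(Y_{s,t}(x))\leq X^{v,y}_u$. In the equality case $Y_{s,t}(x)=X^{v,y}_t$, an approximation by a sequence $(v_n,y_n)$ with $x<X^{v_n,y_n}_s$ and $X^{v_n,y_n}_t\downarrow Y_{s,t}(x)$ strictly, combined with coalescence from Step~1 (which forces $X^{v_n,y_n}_u\to X^{v,y}_u$), again yields $Y_{t,u}(Y_{s,t}(x))\leq X^{v,y}_u$. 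Taking the infimum over all such rational $(v,y)$ gives the desired inequality.

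For the reverse inequality, I would pick rational $(v',y')$ with $v'<t$ and $Y_{s,t}(x)<X^{v',y'}_t$ and approximate $X^{v',y'}$ on $[t,\infty)$ by $X^{v'',y''}$ with $(v'',y'')$ rational, $v''<s$, and $x<X^{v'',y''}_s$. The existence of such approximations (with $X^{v'',y''}_u$ arbitrarily close to $X^{v',y'}_u$) rests on the continuity of the strong solution map in the initial data established in Proposition~1.1 of \cite{MR2094439}; monotonicity then implies $Y_{s,u}(x)\leq X^{v',y'}_u$ up to an error that vanishes in the limit, and taking infimum over $(v',y')$ yields $Y_{s,u}(x)\leq Y_{t,u}(Y_{s,t}(x))$. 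Finally, the identity so obtained depends on a null set that a priori depends on $(s,t,u,x)$; a standard rational-approximation argument combined with time-continuity of $r\mapsto Y_{s,r}(x)$ and right-continuity in $x$ removes this dependence and yields the statement uniformly for all $u\geq t$ and $x\in\R$. The main obstacle is the reverse inequality, specifically the case $s\leq v'<t$ where one must approximate a solution starting at a non-rational time $v'$ by solutions starting at a rational time $v''<s$ while controlling their distance at time $u$; this is precisely where the quantitative continuity of the SBM flow in its starting data, established by Burdzy and Kaspi, does the heavy lifting.
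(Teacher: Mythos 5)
Your overall strategy --- verifying the flow identity directly from the infimum definition of $Y$ using pathwise uniqueness and the comparison structure --- is quite different from the paper's, which simply invokes the standard fact (citing \cite{MR1472487}) that pathwise uniqueness for (\ref{10}) yields $Y_{s,u}(x)=Y_{t,u}(Y_{s,t}(x))$ a.s. for each \emph{fixed} $s\le t\le u$ and $x$, and then extends to all $u\ge t$ and $x\in\R$ simultaneously via the regularity of the flow (monotonicity (\ref{fouda}), the c\`adl\`ag property in $x$, continuity in $t$). Your route could in principle be carried out, but as written it has two problems. First, you have misdiagnosed where the difficulty lies: the inequality $Y_{s,u}(x)\le Y_{t,u}(Y_{s,t}(x))$ is the easy direction and needs none of the approximation machinery you invoke. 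For any rational pair $(v',y')$ admissible in the infimum defining $Y_{t,u}(Y_{s,t}(x))$ one has $Y_{s,t}(x)<X^{v',y'}_t$; since $Y_{s,t}(x)$ is itself an infimum, there is a rational $(v'',y'')$ with $v''<s$, $x<X^{v'',y''}_s$ and $X^{v'',y''}_t<X^{v',y'}_t$, and the two solutions cannot cross after time $t$ by pathwise uniqueness, whence $Y_{s,u}(x)\le X^{v'',y''}_u\le X^{v',y'}_u$. No continuity of the solution map in its initial data is needed here.

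Second, and more seriously, the genuinely delicate point is the equality case of the \emph{other} inequality, and your treatment of it is not a proof. When $Y_{s,t}(x)=X^{v,y}_t=:z$, every pair admissible for $Y_{t,u}(z)$ satisfies $X^{v',y'}_t>z$ and hence $X^{v',y'}_u\ge X^{v,y}_u$ by comparison, so what must be shown is that the infimum of these values comes all the way down to $X^{v,y}_u$. You assert that ``coalescence forces $X^{v_n,y_n}_u\to X^{v,y}_u$'' for a sequence with $X^{v_n,y_n}_t\downarrow z$ strictly, but coalescence alone forces nothing of the sort: two solutions distinct at time $t$ stay distinct until they meet at $0$, and one needs a quantitative input saying that nearby solutions meet quickly --- exactly the content of Lemma \ref{abed}, or equivalently the right-continuity of $z'\mapsto Y_{t,u}(z')$ together with an identification of $Y_{t,u}(z')$ with the strong solutions for $z'>z$, or a convergence-in-mean argument via the Feller property of the two-point motion. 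Without one of these inputs the inequality $Y_{t,u}(Y_{s,t}(x))\le Y_{s,u}(x)$ is unestablished precisely at the points where the infimum defining $Y_{s,t}(x)$ is attained. I would either supply that estimate explicitly or adopt the paper's shorter route: the fixed-time identity from pathwise uniqueness, followed by an a.s. extension over all $u\ge t$ and $x\in\R$ using (\ref{fouda}) and the continuity of $r\mapsto Y_{s,r}(x)$.
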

\noindent\begin{proof} It is known, since pathwise uniqueness holds for the SDE (\ref{10}), that for a fixed $s\leq t\leq u, x\in\R$, we have $Y_{s,u}(x)=Y_{t,u}(Y_{s,t}(x))\ \ a.s.$ (\cite{MR1472487} page 161). Now, using the regularity of the flow, the result extends clearly as desired.
\end{proof}
\noindent To conclude that $Y$ is a stochastic flow of mappings, it remains to show the following 
\begin{lemma}\label{vie}

$\forall t\geq s$, $x\in \R$, $f\in C_0(\R)$
$$\lim_{y \rightarrow x} E[(f(Y_{s,t}(x))-f(Y_{s,t}(y)))^2]=0.$$
\end{lemma}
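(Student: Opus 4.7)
The plan is to upgrade from the $L^2$ statement to the stronger claim that $Y_{s,t}(y)\to Y_{s,t}(x)$ in probability as $y\to x$; since $f\in C_0(\R)$ is bounded and continuous, the $L^2$ convergence of $f(Y_{s,t}(\cdot))$ then follows by dominated convergence.

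Right-continuity of $y\mapsto Y_{s,t}(y)$ at $x$ is essentially free: the monotonicity relation (\ref{fouda}), together with the fact stated just before the lemma that $x\mapsto Y_{s,t}(x)$ is c\`adl\`ag a.s., gives $Y_{s,t}(y)\to Y_{s,t}(x)$ almost surely along any $y\downarrow x$. The real content of the lemma is therefore continuity from the left.

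For that, set $Y_{s,t}(x^-):=\lim_{y\uparrow x,\,y\in\Q}Y_{s,t}(y)$; by monotonicity in $y$ the limit exists a.s.\ and satisfies $Y_{s,t}(x^-)\leq Y_{s,t}(x)$ a.s. The key step is to show
$$Y_{s,t}(x^-)\stackrel{\text{law}}{=}Y_{s,t}(x),$$
after which the a.s.\ inequality forces a.s.\ equality (a non-negative random variable with zero expectation vanishes), and in particular $Y_{s,t}(y_n)\to Y_{s,t}(x)$ in probability for every $y_n\uparrow x$.

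To obtain the equality in law I would invoke the Feller property of $SBM(\alpha^+)$: for every $g\in C_0(\R)$ the map $y\mapsto E[g(X_{t}^{s,y})]$ is continuous, which follows from the explicit transition density of the skew Brownian motion as computed e.g.\ in \cite{MR606993,MR1880238}. Since $Y_{s,\cdot}(y)$ a.s.\ solves (\ref{10}) with initial datum $y$ and since pathwise uniqueness for (\ref{10}) forces $Y_{s,t}(y)\stackrel{\text{law}}{=}X_{t}^{s,y}$, the Feller property yields $Y_{s,t}(y_n)\to Y_{s,t}(x)$ in distribution along any $y_n\uparrow x$. Combined with the a.s.\ convergence $Y_{s,t}(y_n)\to Y_{s,t}(x^-)$, this delivers $Y_{s,t}(x^-)\stackrel{\text{law}}{=}Y_{s,t}(x)$, closing the argument. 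The only subtle point is the passage ``equality in law plus one-sided inequality $\Rightarrow$ a.s.\ equality''; once this trick is used, the monotone Burdzy-Kaspi structure of $Y$ makes everything else routine.
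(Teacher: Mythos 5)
Your proposal is correct, but it follows a genuinely different route from the paper. The paper reduces the lemma to continuity in the initial condition of the two-point semigroup $P_t^{(2)}g(x_1,x_2)=E[g(Y_{0,t}(x_1),Y_{0,t}(x_2))]$, expanding the square as $P_t^{(2)}f^{\otimes 2}(x,x)+P_t^{(2)}f^{\otimes 2}(y,y)-2P_t^{(2)}f^{\otimes 2}(x,y)$, and imports the key estimate $P_t^{(2)}f_\varepsilon(x,y)=\mathbb P(|Y_{0,t}(x)-Y_{0,t}(y)|\geq\varepsilon)\to 0$ from Theorem 10 of \cite{MR2280299}; that estimate is exactly the convergence in probability you aim at, so both arguments pass through the same intermediate fact. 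You instead derive it from scratch using only one-point information: right-continuity comes for free from the c\`adl\`ag monotone structure of the Burdzy--Kaspi flow, and left-continuity follows from the combination ``$Y_{s,t}(x^-)\leq Y_{s,t}(x)$ a.s.\ plus $Y_{s,t}(x^-)\overset{\text{law}}{=}Y_{s,t}(x)$ implies a.s.\ equality'', the equality in law being supplied by the Feller property of $SBM(\alpha^+)$ (available in the paper, since $SBM$ is Walsh Brownian motion with $N=2$) together with uniqueness in law for (\ref{10}). This is a classical and self-contained argument for monotone flows; it avoids the external coalescing-flow citation, at the price of not producing the continuity of $P_t^{(2)}$ in the starting point, which the paper's version establishes along the way. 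Two small points to tighten: in the step ``equal in law plus one-sided inequality implies a.s.\ equality'' you should either note that $Y_{s,t}(x)$ is integrable (it is, since $|Y_{s,t}(x)|$ is a reflected Brownian motion) or apply the argument to a bounded strictly increasing function of $Y_{s,t}(\cdot)$; and the final passage from convergence in probability to the stated $L^2$ limit uses that $f\in C_0(\R)$ is bounded, which you do invoke. Neither is a gap.
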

\noindent\begin{proof} We take $s=0$. For $g\in C_0(\R^2)$, set $$P_t^{(2)}g(x)= E[g(Y_{0,t}(x_1), Y_{0,t}(x_2))],\ \ x=(x_1,x_2).$$  If $\varepsilon >0,\ \  f_\varepsilon (x,y)=1_{\{|x-y|\geq \varepsilon\}}$, then by Theorem 10 in \cite{MR2280299}, 
$P_t^{(2)}f_\varepsilon (x,y)\xrightarrow [{\text{$y\rightarrow x $}}]{}0$.\\
For all $f\in C_0(\R)$, we have
$$E[(f(Y_{0,t}(x))-f(Y_{0,t}(y)))^2]=P_t^{(2)} f^{{\otimes}^2}(x,x)+P_t^{(2)}f^{{\otimes}^2}(y,y)-2 P_t^{(2)} f^{{\otimes}^2}(x,y) .$$
To conclude the lemma, we need only to check that
$$\lim_{y \rightarrow x} P_t^{(2)}f(y)=P_t^{(2)}f(x),\ \forall x\in \R^2 ,\ \ f\in C_0(\R^2).\ \ $$

\noindent Let $f=f_1 \otimes f_2$ with $f_i\in C_0(\R)$, $x=(x_1,x_2), y=(y_1,y_2)\in\R^2$. Then
$$|P_t^{(2)}f(y)-P_t^{(2)}f(x)|\leq M \displaystyle{\sum_{k=1}^2} P_t^{(2)}(|1 \otimes f_k-f_k \otimes 1|)(y_k,x_k),$$
where $M>0$ is a constant. For all $\alpha >0$, $\exists \varepsilon >0$, $|u-v|<\varepsilon \Rightarrow \forall 1\leq k\leq 2 :\ \ |f_k(u)-f_k(v)|<\alpha $. As a result
$$|P_t^{(2)}f(y)-P_t^{(2)}f(x)|\leq 2M\alpha +2M\displaystyle{\sum_{k=1}^2}||f_k||_{\infty} P_t^{(2)}{f}_\varepsilon(x_k,y_k),$$ 
and we arrive at $\limsup_{y\rightarrow x}|P_t^{(2)}f(y)-P_t^{(2)}f(x)|\leq 2M\alpha$ for all $\alpha>0$ which means that $\lim_{y \rightarrow x} P_t^{(2)}f(y)=P_t^{(2)}f(x)$. Now this easily extends by a density argument for all $f\in C_0(\R^2)$.
\end{proof}
In the coming section, we present some properties related to the coalescence of $Y$ we will require in Section \ref{humain} to construct solutions of $(E)$.
\subsubsection{Coalescence of the Burdzy-Kaspi flow.}
In this section, we suppose $\frac{1}{2}<\alpha^+<1$. The analysis of the case $0<\alpha^+<\frac{1}{2}$ requires an application of symmetry. Define $$T_{x,y}=\inf\{r\geq 0,\ Y_{0,r}(x)=Y_{0,r}(y)\},\ \ x,y\in\R.$$ 
By the fundamental result of \cite{MR1837288}, $T_{x,y}<\infty\ \ a.s.$ for all $x,y\in\R$. Due to the local time, coalescence always occurs  in $0$;  $Y_{0,r}(x)=Y_{0,r}(y)=0$ if $r=T_{x,y}$. Recall the definition of $\tau_{s,x}$ from (\ref{bn}). Then $T_{x,y}>\sup(\tau_{0,x},\tau_{0,y})$ a.s. (\cite{MR1837288} page 203). Set
$$L_{t}^x=x+(2\alpha^{+}-1)L_{0,t}(x),\ \ U(x,y)=\inf\{z\geq y : L_{t}^{x}=L_{t}^{y}=z\ \textrm{for some}\ \ t\geq 0\}, y\geq x.$$
According to \cite{MR1880238} (Theorem 1.1), there exists $\lambda>0$ such that $$\forall u\geq y> 0,\ \ \mathbb P(U(0,y)\leq u)=(1-\frac{y}{u})^{\lambda}.$$
Thus for a fixed $0<\gamma< 1$, we get $\lim_{y\rightarrow 0+}\mathbb P(U(0,y)\leq y^{\gamma})=\lim_{y\rightarrow0+}(1-y^{1-\gamma})^{\lambda}=1$.\\
From Theorem 1.1 \cite{MR1880238}, we have $U(x,y)-x\overset{law}{=}U(0,y-x)$ for all $0<x<y$ and so
\begin{equation}\label{wilson}
 \lim_{y\rightarrow x+}\mathbb P(U(x,y)-x\leq (y-x)^{\gamma})=1,\ \forall x\geq0.\ \ 
\end{equation}
\begin{lemma}\label{abed}
For all $x\in\R$, we have $\lim_{y\rightarrow x} T_{x,y}=\tau_{0,x}\ $ in probability.
\end{lemma}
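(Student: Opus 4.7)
I will reduce, via the strong Markov property of $W$ at $\tau_{0,x}$, to a small-$z$ estimate of $T_{0,z}$, then close using \eqref{wilson} and the regularity of $0$ for $SBM(\alpha^+)$. By symmetry (exchange of $\alpha^+$ and $\alpha^-$) I may assume $x \geq 0$; the case $x=0$ is itself the small-$z$ estimate since $\tau_{0,0}=0$, so I focus on $x>0$ and take $y>x$ (the subcase $y<x$ is analogous with $\tau_{0,y}$ in place of $\tau_{0,x}$). For the \emph{lower bound}, the inequality $T_{x,y} > \tau_{0,x} \vee \tau_{0,y}$ combined with $\tau_{0,y} \to \tau_{0,x}$ a.s.\ yields $\liminf_{y \to x} T_{x,y} \geq \tau_{0,x}$ a.s., so only the matching upper bound is at stake.

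For the \emph{reduction}, note that for $r < \tau_{0,x}$ both flows are the translated Brownian motions $x+W_{0,r}$ and $y+W_{0,r}$, so the two-point configuration at time $\tau_{0,x}$ equals $(0,\,y-x)$. Applying Proposition \ref{jnoun} and the strong Markov property of $W$ at $\tau_{0,x}$, the shifted flow $(Y_{\tau_{0,x},\,\tau_{0,x}+u})_{u \geq 0}$ is independent of $\mathcal F^{W}_{\tau_{0,x}}$ and equidistributed with $(Y_{0,u})_{u\geq 0}$, which gives
\[
T_{x,y} - \tau_{0,x} \stackrel{d}{=} T_{0,\,y-x},
\]
reducing the problem to showing $T_{0,z} \to 0$ in probability as $z \to 0+$. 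For the small-$z$ estimate I would use the identity $Y_{0,t}(x)-Y_{0,t}(y)=L^{x}_{t}-L^{y}_{t}$ --- immediate from \eqref{10} and the definition of $L^{\cdot}$ --- which shows $T_{0,z}$ is the first time at which $L^{0}$ and $L^{z}$ coincide, at which instant their common value is $U(0,z)$ by definition. Setting $\sigma_\ell := \inf\{t : L_{0,t}(0) \geq \ell\}$, this yields $T_{0,z} = \sigma_{U(0,z)/(2\alpha^+ - 1)}$. Fixing $\gamma \in (0,1)$, \eqref{wilson} gives $\mathbb P(U(0,z) \leq z^{\gamma}) \to 1$, while the regularity of $0$ for $SBM(\alpha^+)$ forces $\sigma_\ell \to 0$ a.s.\ as $\ell \to 0+$; therefore, for every $\varepsilon > 0$,
\[
\mathbb P(T_{0,z} > \varepsilon) \;\leq\; \mathbb P(U(0,z) > z^{\gamma}) + \mathbb P\bigl(\sigma_{z^{\gamma}/(2\alpha^{+}-1)} > \varepsilon\bigr) \longrightarrow 0
\]
as $z \to 0+$, which closes the argument.

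The main obstacle is the exact identification $T_{0,z} = \sigma_{U(0,z)/(2\alpha^+ - 1)}$: it rests on the identity $Y_{0,t}(x)-Y_{0,t}(y) = L^{x}_{t}-L^{y}_{t}$ together with the fact that coincidence of the two local-time processes implies pathwise coincidence of the flows, which is the coalescence content of \cite{MR1880238, MR2094439}. The reduction via strong Markov and the asymptotic $\sigma_\ell \to 0$ (a consequence of $0$ being regular, i.e.\ $L_{0,\cdot}(0) > 0$ on every interval $(0,t)$) are standard.
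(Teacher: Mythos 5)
For $x>0$ (and for $x=0$ with $y\to 0+$) your argument is essentially the paper's: the paper performs the same reduction $T_{x,y}-\tau_{0,x}\overset{law}{=}T_{0,y-x}$ and then bounds $\mathbb P(t\le T_{0,y})\le\mathbb P(L_{0,t}(0)\le L_{0,T_{0,y}}(0))=\mathbb P(L^{0}_{t}\le U(0,y))$, which is your $\sigma_\ell$ computation in different notation. Note that you do not actually need the exact identity $T_{0,z}=\sigma_{U(0,z)/(2\alpha^+-1)}$, which is delicate at the endpoints of the flat stretches of $L_{0,\cdot}(0)$: the inequality $L_{0,\varepsilon}(0)\le L_{0,T_{0,z}}(0)=U(0,z)/(2\alpha^+-1)$ on $\{T_{0,z}>\varepsilon\}$, together with (\ref{wilson}) and $\mathbb P(L_{0,\varepsilon}(0)=0)=0$, already closes that case.

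The genuine gap is the other side of the origin. The lemma is stated for all $x\in\R$ with $y\to x$ from either side; in particular for $x=0$ one must prove $T_{0,y}\to 0$ as $y\to 0-$, which is exactly the estimate $\mathbb P(t<T_{0,-\epsilon})\to 0$ consumed later in Lemma \ref{sophie}. Your sentence ``the case $x=0$ is itself the small-$z$ estimate'' hides this: your small-$z$ estimate is one-sided, because (\ref{wilson}) is quoted only for ordered pairs $0\le x<y$ of nonnegative points in the regime $\frac12<\alpha^+<1$. The pair $(y,0)$ with $y<0$ straddles the origin, the translation identity $U(x,y)-x\overset{law}{=}U(0,y-x)$ underlying (\ref{wilson}) fails across $0$ (the SDE (\ref{10}) is not translation invariant there), and the reflection $\alpha^+\leftrightarrow\alpha^-$ you invoke at the outset would require the mirror image of (\ref{wilson}) for a skewness parameter below $\frac12$, which you neither quote nor derive; the same uncovered case $T_{0,z}$, $z<0$, is where the reduction lands for every $x<0$ as well. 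The paper closes it with a separate argument: setting $G_t(y)=\mathbb P(t\le T_{0,y})$ and applying the strong Markov property at $\tau_{0,y}$ for $y<0$, it writes $G_t(y)=\mathbb P(t\le\tau_{0,y})+E[1_{\{t>\tau_{0,y}\}}G_{t-\tau_{0,y}}(Y_{0,\tau_{0,y}}(0))]$, uses that $Y_{0,\tau_{0,y}}(0)>0$ a.s.\ and tends to $0+$ as $y\to 0-$, and concludes from the already-proved positive case ($\lim_{z\to 0+}G_{\epsilon}(z)=0$) by dominated convergence. You need to add this step, or an honest symmetric version of (\ref{wilson}), for your proof to cover the full statement; the rest of the write-up is sound.
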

\noindent\begin{proof}
In this proof we denote $Y_{0,t}(0)$ simply by $Y_t$. We first establish the result for $x=0$. For all $t>0$, we have
$$\mathbb P(t\leq T_{0,y})\leq \mathbb P({L}_{0,t}(0)\leq {L}_{0,T_{0,y}}(0))= \mathbb P( L_{t}^{0}\leq U(0,y)
)$$
since $(2\alpha^{+}-1)L_{0,T_{0,y}}(0)=U(0,y)$. The right-hand side converges to $0$ as $y\rightarrow 0+$ by (\ref{wilson}). On the other hand, by the strong Markov property at time $\tau_{0,y}$ for $y<0$, $$G_t(y):=\mathbb P(t\leq T_{0,y})=\mathbb P(t\leq\tau_{0,y})+E[ 1_{\{t>\tau_{0,y}\}}G_{t-\tau_{0,y}}(Y_{\tau_{y}})].$$
 For all $\epsilon>0$, 
$$E[1_{\{t>\tau_{0,y}\}}G_{t-\tau_{0,y}}(Y_{\tau_{0,y}})]= E[1_{\{t-\tau_{0,y}>\epsilon\}}G_{t-\tau_{0,y}}(Y_{\tau_{0,y}})]+E[ 1_{\{0<t-\tau_{0,y}\leq \epsilon\}}G_{t-\tau_{0,y}}(Y_{\tau_{0,y}})]$$
$$ \ \ \ \ \ \ \ \ \ \leq  E[G_{\epsilon}(Y_{\tau_{0,y}})]+\mathbb P(0<t-\tau_{0,y}\leq\epsilon).$$
From previous observations, we have $Y_{\tau_{0,y}}>0\ a.s.$ for all $y<0$ and consequently $Y_{\tau_{0,y}}\longrightarrow0+$ as $y\rightarrow 0-$. Since $\lim_{z\rightarrow0+}G_{\epsilon}(z)=0$, by letting $y\rightarrow0-$ and using dominated convergence, then $\epsilon\rightarrow0$, we get $\limsup\limits_{\substack{y \to 0-}}G_{t}(y)=0$ as desired for $x=0$. Now, the lemma easily holds after remarking that
$$T_{x,y}-\tau_{0,x}\overset{law}{=}T_{0,y-x}\ \textrm{if}\ 0\leq x<y\ \ \textrm{and}\ \ T_{x,y}-\tau_{0,x}\overset{law}{=}T_{0,x-y}\ \textrm{if}\ x< y\leq 0.$$ 
\end{proof}
\noindent For $s\leqslant t, x\in\R$, define 
\begin{equation}\label{lao}
 g_{s,t}(x)=\sup\{ u\in[s,t] : Y_{s,u}(x)=0\}\ \ (\sup(\emptyset)=-\infty).
\end{equation}
We use Lemma \ref{abed} to prove
\begin{lemma}\label{sophie}
 Fix $s, x\in\R$. Then, a.s. for all $t>\tau_{s,x}$, there exists $(v,y)\in\Q^2$ such that $$v<g_{s,t}(x)\ \textrm{and} \ Y_{s,r}(x)=Y_{v,r}(y)\ \forall\ r\geq g_{s,t}(x).$$
\end{lemma}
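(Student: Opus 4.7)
The plan is to construct the rational pair $(v,y)$ by approximating the trajectory $Y_{s,\cdot}(x)$ just before $\ell:=g_{s,t}(x)$ and using Lemma \ref{abed} to push the coalescence time of $Y_{v,\cdot}(y)$ with $Y_{s,\cdot}(x)$ below $\ell$. By the strong flow property (Proposition \ref{jnoun}), once coalescence has occurred by time $\ell$, the two trajectories automatically agree for every $r\ge\ell$, which is the required identity. The whole argument is run on the single full-measure event on which $Y$ is a continuous flow, Proposition \ref{jnoun} holds, and the conclusion of Lemma \ref{abed} is valid after each rational time shift.

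Continuity of $Y_{s,\cdot}(x)$ and the definition of $\ell$ give $Y_{s,\ell}(x)=0$. Pick rationals $v_n\uparrow\ell$ and set $a_n:=Y_{s,v_n}(x)$; then $a_n\to 0$, and Proposition \ref{jnoun} yields $Y_{v_n,\cdot}(a_n)=Y_{s,\cdot}(x)$ on $[v_n,\infty)$. In particular $\tau_{v_n,a_n}$ is exactly the first zero of $Y_{s,\cdot}(x)$ after $v_n$, so $\tau_{v_n,a_n}\le\ell$. Shifting Lemma \ref{abed} to start at the rational time $v_n$ and centering at $a_n$, the coalescence time $T_n(y)$ of $Y_{v_n,\cdot}(y)$ with $Y_{v_n,\cdot}(a_n)$ converges in probability to $\tau_{v_n,a_n}$ as $y\to a_n$. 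In the typical regime $\ell>\tau_{s,x}$, the zero set of the $SBM(\alpha^+)$ trajectory $Y_{s,\cdot}(x)$ accumulates at $\ell$ from the left, so $\tau_{v_n,a_n}<\ell$ strictly; one can then pick a rational $y_n$ close enough to $a_n$ that $\mathbb P(T_n(y_n)>\ell)<2^{-n}$, and Borel--Cantelli upgrades this to $T_n(y_n)\le\ell$ almost surely for all large $n$. Any such $(v_n,y_n)$ does the job.

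To obtain the conclusion simultaneously for all $t>\tau_{s,x}$, I would use that $t\mapsto g_{s,t}(x)$ is constant on each excursion interval of $Y_{s,\cdot}(x)$ from $0$ and hence takes only countably many distinct values, so a countable intersection of the full-measure events above is enough. The main obstacle is the degenerate case $\ell=\tau_{s,x}$, when $t$ falls inside the very first excursion and the zero set fails to accumulate at $\ell$ from the left: the approximation above only yields $T_n(y_n)>\ell$. To handle it I would pick rational $v<s$ and rational $y$ with $X_s^{v,y}$ slightly above $x$ and combine the Burdzy--Kaspi infimum representation $Y_{s,t}(x)=\inf_{u<s,\,z\in\Q,\,X_s^{u,z}>x}X_t^{u,z}$ with the same-time version of Lemma \ref{abed} (letting the starting point descend to $x$ at time $s$) to extract an approximating rational pair whose trajectory meets $Y_{s,\cdot}(x)$ at $\tau_{s,x}$ itself. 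Promoting the in-probability coalescence estimate to an almost-sure exact match at the random time $\tau_{s,x}$ is the delicate technical point of the proof.
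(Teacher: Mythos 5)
Your strategy --- approximate the trajectory from the left of $\ell:=g_{s,t}(x)$ and force coalescence of $Y_{v_n,\cdot}(y_n)$ with $Y_{s,\cdot}(x)$ before $\ell$ via Lemma \ref{abed} --- is genuinely different from the paper's, and as written it has a real gap at its central step. You want to choose a rational $y_n$ near $a_n=Y_{s,v_n}(x)$ so that $\mathbb P(T_n(y_n)>\ell)<2^{-n}$, but Lemma \ref{abed} gives convergence in probability of the coalescence time to $\tau_{v_n,a_n}$ only for a \emph{deterministic} centre, whereas here the centre $a_n$ is random and, more seriously, the threshold $\ell$ is a random variable that is correlated with $T_n(y_n)$ (both depend on the flow after time $v_n$) and exceeds $\tau_{v_n,a_n}$ by a random margin admitting no uniform lower bound. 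Convergence in probability to $\tau_{v_n,a_n}$ therefore does not by itself yield the bound you need; you would at least have to condition on the flow up to time $v_n$, invoke the translation identity $T_{a,y}-\tau_{0,a}\overset{law}{=}T_{0,y-a}$ together with monotonicity in $y-a$, and insert a deterministic buffer $\delta_n$ between $\tau_{v_n,a_n}$ and $\ell$. The observation that makes all of this unnecessary --- and which your argument misses --- is that coalescence in this flow always occurs at $0$, hence at a zero of $Y_{s,\cdot}(x)$: any coalescence before the \emph{deterministic} time $t$ automatically happens before $g_{s,t}(x)$, so it suffices to control $\mathbb P(T>t)$ rather than $\mathbb P(T>\ell)$.

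A second problem is the ``degenerate case $\ell=\tau_{s,x}$'', to which you devote your last paragraph and whose resolution you admit is incomplete. That case is vacuous: by the strong Markov property at $\tau_{s,x}$, $|Y_{s,\cdot}(x)|$ restarts as a reflected Brownian motion from $0$, so the zero set accumulates at $\tau_{s,x}$ from the right and $g_{s,t}(x)>\tau_{s,x}$ a.s.\ for every $t>\tau_{s,x}$; no recourse to the infimum representation is needed. For comparison, the paper avoids both difficulties with a two-sided sandwich: using Lemma \ref{abed} it shows $\mathbb P(\tau_{0,x}<t<T_{x-\epsilon,x+\epsilon})\to 0$, hence a.s.\ there is a (random) $\epsilon>0$ with $Y_{0,t}(x-\epsilon)=Y_{0,t}(x+\epsilon)$; taking any rational $v$ before that coalescence time and any rational $y$ strictly between $Y_{0,v}(x-\epsilon)$ and $Y_{0,v}(x+\epsilon)$, the comparison principle (\ref{fouda}) and Proposition \ref{jnoun} squeeze $Y_{v,\cdot}(y)$ between the two outer trajectories, which forces $Y_{v,r}(y)=Y_{0,r}(x)$ for $r\geq g_{0,t}(x)$ without ever estimating the coalescence time of $Y_{v,\cdot}(y)$ with $Y_{0,\cdot}(x)$ itself. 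I recommend you rebuild the proof around that sandwich, or at minimum incorporate the reduction from $\ell$ to $t$ described above.
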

\noindent\begin{proof}
 We prove the result for $s=0$ and first for $x=0$. Let $t>0$, then for all $\epsilon>0$
$$\mathbb P(\exists\ \eta>0 : Y_{0,t}(\eta)=Y_{0,t}(-\eta))\geq \mathbb P(T_{-\epsilon,\epsilon}\leq t).$$
From $\mathbb P(t<T_{-\epsilon,\epsilon})\leq \mathbb P(t<T_{0,\epsilon})+\mathbb P(t<T_{0,-\epsilon})$ and the previous lemma, we have
$\lim_{\epsilon\rightarrow0}\mathbb P(t<T_{-\epsilon,\epsilon})=0$ and therefore $\mathbb P(\exists\ \eta>0 : Y_{0,t}(\eta)=Y_{0,t}(-\eta))=1$. Choose $\epsilon>0$, such that $Y_{0,t}(\epsilon)=Y_{0,t}(-\epsilon)$ and let  $v\in ]0,T_{-\epsilon,\epsilon}[\cap\Q$. Then $Y_{0,v}(\epsilon)>Y_{0,v}(-\epsilon)$ and for any rational $y\in ]Y_{0,v}(-\epsilon),Y_{0,v}(\epsilon)[$, we have by (\ref{fouda})
$$Y_{v,u}(Y_{0,v}(-\epsilon))\leq Y_{v,u}(y)\leq Y_{v,u}(Y_{0,v}(\epsilon)),\ \forall u\geq v.$$
The flow property (Proposition \ref{jnoun}) yields $ Y_{0,u}(-\epsilon)\leq Y_{v,u}(y)\leq Y_{0,u}(\epsilon),\ \forall u\geq v$. So necessarily $Y_{0,r}(0)=Y_{v,r}(y),\ \forall r\geq g_{0,t}(0)$. For $x>0$ and $\epsilon$ small enough, we have 
$$\mathbb P(Y_{0,t}(x+\epsilon)>Y_{0,t}(x), t>\tau_{0,x})\leq\mathbb P(\tau_{0,x}<t<T_{x,x+\epsilon}).$$
This shows that $\lim_{\epsilon\rightarrow 0}\mathbb P(Y_{0,t}(x+\epsilon)>Y_{0,t}(x)|t>\tau_{0,x})=0$ by Lemma \ref{abed}. Similarly, for $\epsilon$ small
$$\mathbb P(Y_{0,t}(x-\epsilon)<Y_{0,t}(x), t>\tau_{0,x})\leq\mathbb P(\tau_{0,x}<t<T_{x-\epsilon,x}).$$
The right-hand side converges to $0$ as $\epsilon\rightarrow0$ by Lemma \ref{abed} and so\\
$\lim_{\epsilon\rightarrow0}\mathbb P(Y_{0,t}(x)>Y_{0,t}(x-\epsilon)|t>\tau_{0,x})=0$. Since
$$\{Y_{0,t}(x+\epsilon)>Y_{0,t}(x-\epsilon)\}\subset \{Y_{0,t}(x+\epsilon)>Y_{0,t}(x)\}\cup \{Y_{0,t}(x)>Y_{0,t}(x-\epsilon)\},$$
we get $\mathbb P(\exists \epsilon>0 : Y_{0,t}(x-\epsilon)=Y_{0,t}(x+\epsilon)|t>\tau_{0,x})=1$. Following the same steps as the case $x=0$, we show the lemma for a fixed $t$ a.s. Finally, the result easily extends almost surely for all $t$.
\end{proof}
\noindent We close this section by the
\begin{lemma}\label{yor}
With probability 1, for all $(s_1,x_1)\neq(s_2,x_2)\in \Q^2$ simultaneously\\
(i) $T_{s_1,s_2}^{x_1,x_2}:=\inf\{r\geq sup(s_1,s_2) : Y_{s_1,r}(x_1)=Y_{s_2,r}(x_2)\}<\infty$,\\
(ii) $T_{s_1,s_2}^{x_1,x_2}>\sup(\tau_{s_1,x_1},\tau_{s_2,x_2})$,\\
(iii) $Y_{s_1,T_{s_1,s_2}^{x_1,x_2}}(x_1)=Y_{s_2,T_{s_1,s_2}^{x_1,x_2}}(x_2)=0,$\\
(iv) $Y_{s_1,r}(x_1)=Y_{s_2,r}(x_2)\ \forall r\geq T_{s_1,s_2}^{x_1,x_2}.$
\end{lemma}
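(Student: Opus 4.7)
The plan is to establish properties (i)--(iv) almost surely for each fixed pair $((s_1,x_1),(s_2,x_2))\in(\Q^2)^2$, and then take the countable intersection of these full-measure events to obtain the simultaneous statement. WLOG we may assume $s_1\leq s_2$.

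First I would reduce to the case of a common starting time. Proposition \ref{jnoun} applied at the rational time $s_2$ yields, a.s., $Y_{s_1,r}(x_1)=Y_{s_2,r}(Y_{s_1,s_2}(x_1))$ for every $r\geq s_2$. Setting $y_1:=Y_{s_1,s_2}(x_1)$ and $y_2:=x_2$, the meeting time $T_{s_1,s_2}^{x_1,x_2}$ coincides (on this full-measure event) with the meeting time, after $s_2$, of the two flow trajectories $Y_{s_2,\cdot}(y_1)$ and $Y_{s_2,\cdot}(y_2)$ starting from the common time $s_2$. Parts (i) and (iii) then follow from the Burdzy--Kaspi result already invoked at the beginning of the coalescence subsection: for any two initial positions $y_1,y_2$, one has $T_{y_1,y_2}<\infty$ a.s., and since (by the same reference) coalescence must occur while the common value equals $0$, we obtain $Y_{s_1,T}(x_1)=Y_{s_2,T}(x_2)=0$ at the meeting time $T=T_{s_1,s_2}^{x_1,x_2}$. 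Part (ii) is immediate from the strict inequality $T_{y_1,y_2}>\sup(\tau_{s_2,y_1},\tau_{s_2,y_2})$ recalled earlier, once one observes that $\tau_{s_1,x_1}=\tau_{s_2,Y_{s_1,s_2}(x_1)}=\tau_{s_2,y_1}$ on the event in question (because $Y_{s_1,\cdot}(x_1)$ hits $0$ for the first time exactly when $Y_{s_2,\cdot}(y_1)$ does).

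For (iv), one must verify that coalescence persists beyond the random time $T$. Both processes $(Y_{s_1,r}(x_1))_{r\geq T}$ and $(Y_{s_2,r}(x_2))_{r\geq T}$ are adapted solutions of (\ref{10}) with $\alpha=\alpha^+$, driven by the same white noise $W$, and agree at time $T$ with common value $0$. Applying the strong Markov property of $W$ at the stopping time $T$ together with the pathwise uniqueness of (\ref{10}) for the fixed initial condition $0$ produces equality of the two paths on $[T,\infty)$. Equivalently, one may note that on a countable dense set of rationals $t\in\Q\cap(T,\infty)$ the flow property of Proposition \ref{jnoun} gives $Y_{s_i,r}(x_i)=Y_{t,r}(Y_{s_i,t}(x_i))$ for $r\geq t$, and then pass to the limit $t\downarrow T$ using continuity of $r\mapsto Y_{s_i,r}(x_i)$ together with the fact that both arguments $Y_{s_i,t}(x_i)$ converge to $0$.

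Finally, a countable union over $((s_1,x_1),(s_2,x_2))\in(\Q^2)^2$ of the exceptional null sets produced in the steps above yields the claimed almost-sure simultaneous validity. The main obstacle is Step (iv): propagation of coalescence across the random meeting time $T$. The delicate point is that pathwise uniqueness for (\ref{10}) is stated for a deterministic initial datum, so one must transfer it across $T$ either via the strong Markov property of the driving noise or via a right-continuity argument using the rational-time flow property, as indicated.
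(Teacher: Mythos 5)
Your proposal is correct and follows essentially the same route as the paper: reduce to a common starting time $s_2$ via Proposition \ref{jnoun}, invoke the Burdzy--Kaspi coalescence results for (i)--(iii), and propagate the identity past the meeting time by pathwise uniqueness for (iv); the paper's own treatment of (iv) is exactly your strong-Markov/pathwise-uniqueness remark, compressed into one line. The one place where the two arguments genuinely diverge is (ii). The facts $T_{x,y}<\infty$ and $T_{x,y}>\sup(\tau_{0,x},\tau_{0,y})$ recalled from \cite{MR1837288} are each stated for a \emph{fixed deterministic} pair $(x,y)$, whereas you apply the second one at the random point $y_1=Y_{s_1,s_2}(x_1)$; this requires an extra freezing step (condition on $\sigma(W_{u,v},\ s_1\leq u\leq v\leq s_2)$ and use that $Y_{s_2,\cdot}$ and $\tau_{s_2,\cdot}$ are functionals of the independent increment $(W_{s_2,r})_{r\geq s_2}$) which you do not write down. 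The paper sidesteps this for (ii) by a monotonicity trick: assuming for instance $0<z:=Y_{s_1,s_2}(x_1)<x_2$, it interposes a rational $r\in\,]z,x_2[$, uses $Y_{s_2,\cdot}(z)\leq Y_{s_2,\cdot}(r)\leq Y_{s_2,\cdot}(x_2)$ from (\ref{fouda}) to get $T_{s_1,s_2}^{x_1,x_2}\geq T_{s_2,s_2}^{r,x_2}>\tau_{s_2,x_2}$ from the \emph{deterministic rational} pair $(r,x_2)$, and combines this with $T_{s_1,s_2}^{x_1,x_2}>\tau_{s_2,z}\geq\tau_{s_1,x_1}$. Your version buys a shorter argument at the cost of a routine but necessary conditioning step; the paper's version only ever invokes the quoted results at countably many deterministic points. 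Note also that your parenthetical claim $\tau_{s_1,x_1}=\tau_{s_2,y_1}$ fails when $Y_{s_1,\cdot}(x_1)$ hits $0$ before time $s_2$; however the inequality $\tau_{s_1,x_1}\leq\tau_{s_2,y_1}$, which is all that (ii) requires, always holds.
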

\noindent\begin{proof}
(i) is a consequence of Proposition \ref{jnoun}, the independence of increments and the coalescence of $Y$.(ii) Fix $(s_1,x_1)\neq(s_2,x_2)\in \Q^2$ with $s_1\leq s_2$. By the comparison principle (\ref{fouda}) and Proposition \ref{jnoun},  $Y_{s_1,t}(x_1)\geq Y_{s_2,t}(x_2)$ for all $t\geq s_2$ or $Y_{s_1,t}(x_1)\leq Y_{s_2,t}(x_2)$  for all $t\geq s_2$. Suppose for example that $0<z:=Y_{s_1,s_2}(x_1)<x_2$ and take a rational $r\in]z,x_2[$. Then $T_{s_1,s_2}^{x_1,x_2}>\tau_{s_2,z}\geq \tau_{s_1,x_1}$ and $ T_{s_1,s_2}^{x_1,x_2}\geq T_{s_2,s_2}^{r,x_2}>\tau_{s_2,x_2}$. (iii) is clear since coalescence occurs in $0$. (iv) is an immediate consequence of the pathwise uniqueness of (\ref{10}).
\end{proof}

\subsection{Construction of solutions associated to $(E)$.}\label{humain}

We now extend the notations given in Section \ref{enri}. For all $n\geq0$, let $\mathbb D_n=\{\frac{k}{2^n},\ k\in\Z\}$ and $\mathbb D$ be the set of all dyadic numbers: $\mathbb D=\cup_{n\in\N}\mathbb D_n$. For $u<v$, define $n(u,v)=\inf\{n\in\N : \mathbb D_n\cap ]u,v[\neq\emptyset\}$ and $f(u,v)=\inf \mathbb D_{n(u,v)}\cap]u,v[$. Denote by $G_{\Q}=\{x\in G : |x|\in\Q_+\}$. We also fix a bijection $\psi : \N\longrightarrow\Q\times G_{\Q}$ and set $(s_i,x_i)=\psi(i)$ for all $i\geq0$.
\subsubsection{Construction of a stochastic flow of mappings $\varphi$ solution of $(E)$.}\label{mavie}
Let $W$ be a real white noise and $Y$ be the flow of the $SBM(\alpha^+)$ constructed from $W$ in the previous section. We first construct $\varphi_{s,\cdot}(x)$ for all $(s,x)\in \Q\times G_{\Q}$ and then extend this definition for all $(s,x)\in\R\times G$. We begin by $\varphi_{s_0,\cdot}(x_0)$, then  $\varphi_{s_1,\cdot}(x_1)$ and so on. To define $\varphi_{s_0,\cdot}(x_0)$, we flip excursions of $Y_{s_0,\cdot}(\varepsilon (x_0)|x_0|)$ suitably. Then let $\varphi_{s_1,t}(x_1)$ be equal to $\varphi_{s_0,t}(x_0)$ if $Y_{s_0,t}(\varepsilon (x_0)|x_0|)=Y_{s_1,t}(\varepsilon (x_1)|x_1|)$. Before coalescence of $Y_{s_0,\cdot}(\varepsilon (x_0)|x_0|)$ and $Y_{s_1,\cdot}(\varepsilon (x_1)|x_1|)$, we define $\varphi_{s_1,\cdot}(x_1)$ by flipping excursions of $Y_{s_1,\cdot}(\varepsilon (x_1)|x_1|)$ independently of what happens to $\varphi_{s_0,\cdot}(x_0)$ and so on. In what follows, we translate this idea rigorously.
Let $\vec\gamma^+,\vec\gamma^-$ be two independent random variables on any probability space such that
\begin{equation}\label{chahid}
\vec\gamma^+\overset{law}{=}\displaystyle\sum_{i=1}^{p}\frac{\alpha_i}{\alpha^+}\delta_{\vec{e}_{i}},\ \ \ \vec\gamma^-\overset{law}{=}\displaystyle\sum_{j=p+1}^{N}\frac{\alpha_j}{\alpha^-}\delta_{\vec{e}_{j}}.
\end{equation}
Let $(\Omega ,\mathcal A,\mathbb P)$ be a probability space rich enough and $W=(W_{s,t}, s\leq t)$ be a real white noise defined on it.
\noindent For all $s\leq t, x\in G$, let $Z_{s,t}(x):=Y_{s,t}(\varepsilon (x)|x|)$ where $Y$ is the flow of Burdzy-Kaspi constructed from $W$ as in Section \ref{laa} if $\alpha^+\notin\{0,1\}$ (= the reflecting Brownian motion associated to (\ref{10}) if $\alpha^+\in\{0,1\}$).\\ We retain the notations $\tau_{s,x}, g_{s,t}(x)$ of the previous section (see (\ref{bn}) and (\ref{lao})). For $s\in\R, x\in G$ define, by abuse of notations
$$\tau_{s,x}=\tau_{s,\varepsilon(x)|x|},\ g_{s,\cdot}(x)=g_{s,\cdot}(\varepsilon(x)|x|)\ \textrm{and}\ \ d_{s,t}(x)=\inf\{r\geq t : Z_{s,r}(x)=0\}.$$ 
It will be convenient to set $Z_{s,r}(x)=\infty$ if $r<s$. For all $q\geq1, u_0,\cdots,u_q\in\R, y_0,\cdots,y_q\in G$ define
$$T_{u_0,\cdots,u_{q}}^{y_0,\cdots,y_{q}}=\inf\{r\geq \tau_{u_{q},y_{q}} : Z_{u_{q},r}(y_{q})\in \{Z_{u_i,r}(y_i), i\in [1,q-1]\}\}.$$
 Let $\{(\vec\gamma^+_{s_0,x_0}(r),\vec\gamma^-_{s_0,x_0}(r)), r\in\mathbb D\cap[s_0,+\infty[\}$ be a family of independent copies of $(\vec\gamma^+,\vec\gamma^-)$ which is independent of $W$. We define $\varphi_{s_0,\cdot}(x_0)$ by 
$$\varphi_{s_0,t}(x_0)= \begin{cases}
       x_0+\vec{e}(x_0)\varepsilon(x_0)W_{s_0,t}& \text{if} \ s_0\leq t\leq \tau_{s_0,x_0} \\
        \ 0 & \text{if} \ t>\tau_{s_0,x_0}, Z_{s_0,t}(x_0)=0\\ 
        \vec\gamma^+_{s_0,x_0}(f_0)|Z_{s_0,t}(x_0)|,\ \ f_0=f(g_{s_0,t}(x_0),d_{s_0,t}(x_0)) & \text{if} \ t>\tau_{s_0,x_0}, Z_{s_0,t}(x_0)>0\\
        \vec\gamma^-_{s_0,x_0}(f_0)|Z_{s_0,t}(x_0)|,\ \ f_0=f(g_{s_0,t}(x_0),d_{s_0,t}(x_0)) & \text{if} \ t>\tau_{s_0,x_0}, Z_{s_0,t}(x_0)<0
       \end{cases}$$ 
Now, suppose that $\varphi_{s_0,\cdot}(x_0),\cdots,\varphi_{s_{q-1},\cdot}(x_{q-1})$ are defined and let $\{(\vec\gamma^+_{s_q,x_q}(r),\vec\gamma^-_{s_q,x_q}(r)), r\in\mathbb D\cap[s_q,+\infty[\}$ be a family of independent copies of $(\vec\gamma^+,\vec\gamma^-)$ which is also independent of\\
 $\sigma\left(\vec\gamma^+_{s_i,x_i}(r),\vec\gamma^-_{s_i,x_i}(r), r\in\mathbb D\cap[s_i,+\infty[, 1\leq i\leq q-1, W\right)$. Since $T_{s_0,\cdots,s_{q}}^{x_0,\cdots,x_{q}}<\infty$, let $i\in[1,q-1]$ and $(s_i,x_i)$ such that $Z_{s_q,t_0}(x_q)=Z_{s_i,t_0}(x_i)$ with $t_0=T_{s_0,\cdots,s_{q}}^{x_0,\cdots,x_{q}}$. We define $\varphi_{s_q,\cdot}(x_q)$ by 
$$\varphi_{s_q,t}(x_q)= \begin{cases}
       x_q+\vec{e}(x_q)\varepsilon(x_q)W_{s_q,t}& \text{if} \ s_q\leq t\leq \tau_{s_q,x_q} \\
        \ 0 & \text{if} \ t>\tau_{s_q,x_q}, Z_{s_q,t}(x_q)=0\\ 
        \vec\gamma^+_{s_q,x_q}(f_q)|Z_{s_q,t}(x_q)|,\ \ f_q=f(g_{s_q,t}(x_q),d_{s_q,t}(x_q)) & \text{if} \ t\in[\tau_{s_{q}, x_{q}},t_0], Z_{s_q,t}(x_q)>0\\
        \vec\gamma^-_{s_q,x_q}(f_q)|Z_{s_q,t}(x_q)|,\ \ f_q=f(g_{s_q,t}(x_q),d_{s_q,t}(x_q))  & \text{if} \ t\in[\tau_{s_{q}, x_{q}},t_0], Z_{s_q,t}(x_q)<0\\
      \varphi_{s_i,t}(x_i) & \text{if} \ t\geq t_0\\
       \end{cases}$$ 
In this way, we construct $(\varphi_{s,\cdot}(x), s\in\Q, x\in G_{\Q})$.\\
Now, for all $s\in\R, x\in G$, let $\varphi_{s,t}(x)=x+\vec{e}(x)\varepsilon(x)W_{s,t}\ \textrm{if}\ \ s\leq t\leq \tau_{s,x}$. If $t>\tau_{s,x}$, then by Lemma (\ref{sophie}), there exist $v\in\Q, y\in G_{\Q}$ such that $v<g_{s,t}(x)\ \textrm{and} \ Z_{s,r}(x)=Z_{v,r}(y)\ \forall\ r\geq g_{s,t}(x)$. In this case, we define $\varphi_{s,t}(x)=\varphi_{v,t}(y)$.
Later, we will show that $\varphi$ is a coalescing solution of $(E)$.
\subsubsection{Construction of a stochastic flow of kernels $K^{m^+,m^-}$ solution of $(E)$.}\label{rayoum}
Let $m^+$ and $m^-$ be two probability measures respectively on $\Delta_p$ and $\Delta_{N-p}$. Let $\mathcal U^+,\mathcal U^-$ be two independent random variables on any probability space such that
\begin{equation}\label{maalaoui}
 \mathcal U^+\overset{law}{=}m^+,\ \mathcal U^-\overset{law}{=}m^-.
\end{equation}
Let $(\Omega ,\mathcal A,\mathbb P)$ be a probability space rich enough and $W=(W_{s,t}, s\leq t)$ be a real white noise defined on it. We retain the notation introduced in the previous paragraph for all functions of $W$. We consider a family $\{(\mathcal U^+_{s_0,x_0}(r),\mathcal U^-_{s_0,x_0}(r)), r\in\mathbb D\cap[s_0,+\infty[\}$ of independent copies of $(\mathcal U^+,\mathcal U^-)$ which is independent of $W$.\\
If $t>\tau_{s_0,x_0}$ and $Z_{s_0,t}(x_0)>0$ (resp. $Z_{s_0,t}(x_0)<0$), let 
$$U_{s_0,t}^+(x_0) =\mathcal U^{+}_{s_0,x_0}(f_0)\ \ (\textrm{resp.}\  U_{s_0,t}^-(x_0) =\mathcal U^{-}_{s_0,x_0}(f_0)),\ f_0=f(g_{s_0,t}(x_0),d_{s_0,t}(x_0)).$$
Write $U^{+}_{s_0,t}(x_0)=(U^{+,i}_{s_0,t}(x_0))_{1\leq i\leq p}$ (resp. $U^{-}_{s_0,t}(x_0)=(U^{-,i}_{s_0,t}(x_0))_{p+1\leq i\leq N}$) if $Z_{s_0,t}(x_0)>0, t>\tau_{s_0,x_0}$ (resp. $Z_{s_0,t}(x_0)<0, t>\tau_{s_0,x_0}$) and now define
$$K_{s_0,t}^{m^+,m^-}(x_0)= \begin{cases}
       \delta_{x_0+\vec{e}(x_0)\varepsilon(x_0)W_{s_0,t}}\ & \text{if} \ s_0\leq t\leq \tau_{s_0,x_0} \\
        \ \sum_{i=1}^{p}U^{+,i}_{s_0,t}(x_0)\delta_{\vec{e}_i|Z_{s_0,t}(x_0)|} & \text{if} \ t>\tau_{s_0,x_0}, Z_{s_0,t}(x_0)>0\\ 
        \sum_{i=p+1}^{N}U^{-,i}_{s_0,t}(x_0)\delta_{\vec{e}_i|Z_{s_0,t}(x_0)|} & \text{if} \ t>\tau_{s_0,x_0}, Z_{s_0,t}(x_0)<0\\
        \delta_0 & \text{if} \ t>\tau_{s_0, x_0}, Z_{s_0,t}(x_0)=0\\
       
        \end{cases}$$ 
Suppose that $K^{m^+,m^-}_{s_0,\cdot}(x_0),\cdots,K^{m^+,m^-}_{s_{q-1},\cdot}(x_{q-1})$ are defined and let $\{(\mathcal U^+_{s_q,x_q}(r),\mathcal U^-_{s_q,x_q}(r)), r\in\mathbb D\cap[s_q,+\infty[\}$ be a family of independent copies of $(\mathcal U^+,\mathcal U^-)$ which is also independent of\\
$\sigma\left(\mathcal U^+_{s_i,x_i}(r),\mathcal U^-_{s_i,x_i}(r), r\in\mathbb D\cap[s_i,+\infty[, 1\leq i\leq q-1, W\right)$. If $t>\tau_{s_q,x_q}$ and $Z_{s_q,t}(x_q)>0$ (resp. $Z_{s_q,t}(x_q)<0$), we define $U^{+}_{s_q,t}(x_q)=(U^{+,i}_{s_q,t}(x_q))_{1\leq i\leq p}$ (resp. $U^{-}_{s_q,t}(x_q)=(U^{-,i}_{s_q,t}(x_q))_{p+1\leq i\leq N}$) by analogy to $q=0$.
Let $i\in[1,q-1]$ and $(s_i,x_i)$ such that $Z_{s_q,t_0}(x_q)=Z_{s_i,t_0}(x_i)$ with $t_0=T_{s_0,\cdots,s_{q}}^{x_0,\cdots,x_{q}}$. Then, define 
$$K_{s_q,t}^{m^+,m^-}(x_q)= \begin{cases}
       \delta_{x_q+\vec{e}(x_q)\varepsilon(x_q)W_{s_q,t}}\ & \text{if} \ s_q\leq t\leq \tau_{s_q,x_q} \\
        \ \sum_{i=1}^{p}U^{+,i}_{s_q,t}(x_q)\delta_{\vec{e}_i|Z_{s_q,t}(x_q)|} & \text{if} \ t_0>t>\tau_{s_q,x_q}, Z_{s_q,t}(x_q)>0\\ 
        \sum_{i=p+1}^{N}U^{-,i}_{s_q,t}(x_q)\delta_{\vec{e}_i|Z_{s_q,t}(x_q)|} & \text{if} \ t_0>t>\tau_{s_q,x_q}, Z_{s_q,t}(x_q)<0\\
        \delta_0 & \text{if} \  t_0\geq t>\tau_{s_q, x_q}, Z_{s_q,t}(x_q)=0\\
        K^{m^+,m^-}_{s_i,t}(x_i) & \text{if} \ t>t_0\\
        \end{cases}$$ 
In this way, we construct $(K_{s,}^{m^+,m^-}(x), s\in\Q, x\in G_{\Q})$.\\
Now, for $s\in\R, x\in G$, let $K^{m^+,m^-}_{s,t}(x)=\delta_{x+\vec{e}(x)\varepsilon(x)W_{s,t}}\ \textrm{if}\ \ s\leq t\leq \tau_{s,x}$. If $t>\tau_{s,x}$, let $v\in\Q, y\in G_{\Q}$ such that $v<g_{s,t}(x)\ \textrm{and} \ Z_{s,r}(x)=Z_{v,r}(y)\ \forall\ r\geq g_{s,t}(x)$. Then, define $K^{m^+,m^-}_{s,t}(x)=K^{m^+,m^-}_{v,t}(y)$.\\
In the next section we will show that $K^{m^+,m^-}$ is a stochastic flow of kernels on $G$ which solves $(E)$.
\subsubsection{Construction of $(K^{m^+,m^-},\varphi)$ by filtering.}
 Let $m^+$ and $m^-$ be two probability measures as in Theorem \ref{hajri} and $(\vec\gamma^+,\mathcal U^+), (\vec\gamma^-,\mathcal U^-)$ be two independent random variables satisfying
$$\mathcal U^+=(\mathcal U^{+,i})_{1\leq i\leq p}\overset{law}{=}m^+,\ \ \mathcal U^-=(\mathcal U^{-,j})_{p+1\leq j\leq N}\overset{law}{=}m^-,$$
\begin{equation}\label{walid}
\mathbb P(\vec\gamma^+=\vec{e}_{i}|\mathcal U^{+})=\mathcal U^{+,i},\ \forall i\in[1,p],
\end{equation}
and 
\begin{equation}\label{walidb}
\mathbb P(\vec\gamma^-=\vec{e}_{j}|\mathcal U^{-})=\mathcal U^{-,j},\ \forall j\in[p+1,N].\end{equation}
Then, in particular $(\vec\gamma^+,\vec\gamma^-)$ and $(\mathcal U^+,\mathcal U^-)$ satisfy respectively (\ref{chahid}) and (\ref{maalaoui}).\\
On a probability space $(\Omega ,\mathcal A,\mathbb P)$  consider the following independent processes
\begin{itemize}
 \item $W=(W_{s,t}, s\leq t)$ a real white noise.
\item $\{(\vec\gamma^+_{s,x}(r),\mathcal U^+_{s,x}(r)), r\in\mathbb D\cap[s,+\infty[, (s,x)\in\Q\times G_{\Q}\}$ a family of independent copies of $(\vec\gamma^+,\mathcal U^+)$.
\item $\{(\vec\gamma^-_{s,x}(r),\mathcal U^-_{s,x}(r)), r\in\mathbb D\cap[s,+\infty[, (s,x)\in\Q\times G_{\Q}\}$ a family of independent copies of $(\vec\gamma^-,\mathcal U^-)$.
\end{itemize}
Now, let $\varphi$ and $K^{m^+,m^-}$ be the processes constructed in Sections \ref{mavie} and \ref{rayoum} respectively from $(\vec\gamma^+,\vec\gamma^-,W)$ and $(\mathcal U^+,\mathcal U^-,W)$.  
Let $\sigma(\mathcal U^{+},\mathcal U^-,W)$ be the $\sigma$-field generated by $\{\mathcal U^{+}_{s,x}(r), \mathcal U^{-}_{s,x}(r), r\in\mathbb D\cap [s,+\infty[, (s,x)\in \Q\times G_{\Q}\}$ and $W$. We then have the
\begin{prop}\label{100}
(i) For all measurable bounded function $f$ on $G$, $s\leq t\in\R, x\in G$, with probability 1, $$K_{s,t}^{m^+,m^-}f(x)=E[f(\varphi_{s,t}(x))|\sigma(\mathcal U^{+},\mathcal U^{-},W)].$$
(ii) For all $s,x$, with probability 1, $\forall t\geq s$ $$|\varphi_{s,t}(x)|=|Z_{s,t}(x)|,\ \ \ \varphi_{s,t}(x)\in G^+\Leftrightarrow Z_{s,t}(x)\geq0\ \textrm{and}\ \ \ \varphi_{s,t}(x)\in G^-\Leftrightarrow Z_{s,t}(x)\leq0.$$
(iii) For all $s,x\neq y$, with probability 1
$$t_0:=\inf\{r\geq s : \varphi_{s,r}(x)=\varphi_{s,r}(y)\}=\inf\{r\geq s : Z_{s,r}(x)=Z_{s,r}(y)=0\}$$
and $\varphi_{s,r}(x)=\varphi_{s,r}(y),\ \ \forall r\geq t_0.$
\end{prop}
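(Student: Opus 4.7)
The plan is to handle the three parts in turn, relying on the inductive constructions in Sections \ref{mavie} and \ref{rayoum} and the coalescence properties of the Burdzy--Kaspi flow from Section \ref{mahmoud}. Part (ii) is a direct case analysis matching the branches of the construction. On $\{s\le t\le\tau_{s,x}\}$, we have $\varphi_{s,t}(x)=\vec{e}(x)(|x|+\varepsilon(x)W_{s,t})$ and $Z_{s,t}(x)=\varepsilon(x)|x|+W_{s,t}$, sharing the modulus $|x|+\varepsilon(x)W_{s,t}\ge 0$; the $\varphi$-trajectory sits on the ray $\vec{e}(x)$, which lies in $G^+$ precisely when $\varepsilon(x)=1$ (matching $Z\ge 0$) and in $G^-$ when $\varepsilon(x)=-1$ (matching $Z\le 0$). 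For rational starting points on $\{t>\tau_{s,x}\}$, $\varphi_{s,t}(x)$ is either $0$ or a product $\vec\gamma^{\pm}_{s,x}(f_0)\,|Z_{s,t}(x)|$, and since $|\vec\gamma^{\pm}|=1$ with $\vec\gamma^+\in\{\vec{e}_1,\dots,\vec{e}_p\}\subset G^+$ and $\vec\gamma^-\in\{\vec{e}_{p+1},\dots,\vec{e}_N\}\subset G^-$, the three identities follow. The extension to arbitrary $(s,x)$ is immediate because Lemma \ref{sophie} provides rational $(v,y)$ with $\varphi_{s,t}(x)=\varphi_{v,t}(y)$ and $Z_{s,t}(x)=Z_{v,t}(y)$ on $\{t>\tau_{s,x}\}$.

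For (i), I condition on $\sigma(\mathcal U^+,\mathcal U^-,W)$. All $W$-derived quantities---the hitting times $\tau$, the endpoints $g_{s,t}(x),d_{s,t}(x)$ of the straddling excursion of $Z$, the dyadic label $f(g,d)$, the sign of $Z_{s,t}(x)$, and in the rational case the coalescence time $t_0$ and the parent index $i$ of the inductive construction---are $\sigma(W)$-measurable. On $\{t>\tau_{s,x},\,Z_{s,t}(x)>0\}$, the construction gives $\varphi_{s,t}(x)=\vec\gamma^{+}_{v,y}(r)\,|Z_{s,t}(x)|$ with $(v,y,r)$ a measurable function of $W$. By (\ref{walid}) and the mutual independence of the family $\{(\vec\gamma^{\pm}_{s',x'}(r),\mathcal U^{\pm}_{s',x'}(r))\}$ (both internally and from $W$), the conditional law of $\vec\gamma^+_{v,y}(r)$ given $\sigma(\mathcal U^+,\mathcal U^-,W)$ is $\sum_{i=1}^p\mathcal U^{+,i}_{v,y}(r)\,\delta_{\vec{e}_i}$, and integrating $f$ against this reproduces the expression defining $K^{m^+,m^-}_{s,t}f(x)$ in Section \ref{rayoum}. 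The case $Z_{s,t}(x)<0$ is symmetric via $\vec\gamma^-$ and (\ref{walidb}), and on $\{t\le\tau_{s,x}\}$ or $\{Z_{s,t}(x)=0\}$ both sides collapse to $f(x+\vec{e}(x)\varepsilon(x)W_{s,t})$ or $f(0)$. The extension to arbitrary $(s,x)$ uses Lemma \ref{sophie} once more.

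For (iii), I would first settle the rational case and then lift by Lemma \ref{sophie}. The construction of $\varphi$ builds an increasing tree of parent assignments $p(j)<j$ with $\varphi_{s_j,t}(x_j)=\varphi_{s_{p(j)},t}(x_{p(j)})$ for $t\ge T^{x_0,\dots,x_j}_{s_0,\dots,s_j}$; iterating, $\varphi_{s_j,t}(x_j)$ equals $\varphi_{s_a,t}(x_a)$ where $a$ is the ancestor of $j$ reached by following the parent map as long as each merger time is $\le t$. Two leaves $j,k$ therefore first produce the same $\varphi$-value exactly when their ancestor paths reach a common node, and by the Burdzy--Kaspi coalescence-at-$0$ property (\ref{fouda}) together with Lemma \ref{yor} this happens precisely at the first instant at which $Z_{s_j,\cdot}(x_j)$ and $Z_{s_k,\cdot}(x_k)$ are simultaneously $0$. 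Conversely, before that instant (ii) and the comparison principle force $\varphi_{s_j,r}(x_j)\ne\varphi_{s_k,r}(x_k)$: equality would imply $|Z_{s_j,r}(x_j)|=|Z_{s_k,r}(x_k)|$ with matching signs, hence $Z_{s_j,r}(x_j)=Z_{s_k,r}(x_k)$, contradicting the fact (Lemma \ref{yor}) that $Z$-trajectories can only meet at $0$. The main technical obstacle is precisely this tree bookkeeping---verifying that at the first common zero of $Z_{s_j,\cdot}(x_j)$ and $Z_{s_k,\cdot}(x_k)$ the labels $j,k$ have indeed fused to a common ancestor in the construction, rather than transiently passing through the origin on unrelated branches---which follows from the inductive definition of $T^{x_0,\dots}_{s_0,\dots}$ but requires careful unravelling.
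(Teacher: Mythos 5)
Your proposal is correct and follows exactly the route the paper takes: the paper's own proof is a one-line appeal to (\ref{walid}), (\ref{walidb}) and the construction for (i), "clear by construction" for (ii), and coalescence at $0$ for (iii), and your argument is simply a careful expansion of those same ingredients (case analysis of the branches, conditioning on $\sigma(\mathcal U^{+},\mathcal U^{-},W)$ using the $W$-measurability of the excursion labels and parent assignments, and the fact that by (ii) a meeting of the $\varphi$-trajectories forces a common zero of the $Z$-trajectories). The only cosmetic slip is attributing coalescence-at-$0$ to (\ref{fouda}) (the comparison principle) rather than to Lemma \ref{yor} and the cited result of Barlow--Burdzy--Kaspi--Mandelbaum, which you in any case also invoke.
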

\begin{proof} (i) comes from (\ref{walid}), (\ref{walidb}) and the definiton of our flows, (ii) is clear by construction. By (ii) coalescence of $\varphi_{s,\cdot}(x)$ and $\varphi_{s,\cdot}(y)$ occurs in $0$ and so (iii) is clear.
\end{proof}
\noindent Next we will prove that $\varphi$ is a stochastic flow of mappings on $G$. It remains to prove that properties (1) and (4) in the definition are satisfied. As in Lemma \ref{vie}, property (4) can be derived from the following
\begin{lemma}\label{abedi}
 $\forall t\geq s, \epsilon>0, x\in G$, we have
$$\lim\limits_{\substack{y \to x}}\mathbb P(d(\varphi_{s,t}(x),\varphi_{s,t}(y))\geq\epsilon)=0.$$

\end{lemma}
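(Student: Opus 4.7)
The plan is to exploit two properties of the construction of $\varphi$ in Section \ref{mavie}: the Burdzy--Kaspi flow $Y$ coalesces at the origin (Lemma \ref{abed}), and once the companion trajectories $Z_{s,\cdot}(x)$ and $Z_{s,\cdot}(y)$ have coalesced they share the same excursion intervals, hence the same dyadic labels $f_0$ and the same flipping variables $\vec\gamma^\pm$, so the $\varphi$-trajectories coincide from that moment on.

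By stationarity of the white noise I may take $s=0$. If $x\in D_i$ with $x\neq 0$, graph-closeness forces $y\in D_i$ for $y$ near enough, and then $\varepsilon(y)|y|\to\varepsilon(x)|x|$ in $\R$; the case $x=0$ is analogous using $\tau_{0,0}=0$. Let $T:=T_{\varepsilon(x)|x|,\varepsilon(y)|y|}$ be the coalescence time from Section \ref{mahmoud}. The first step is to verify that on $\{T\le t\}$ one has $\varphi_{0,t}(x)=\varphi_{0,t}(y)$: since $Z_{0,\cdot}(x)=Z_{0,\cdot}(y)$ on $[T,\infty)$, one has $g_{0,t}(x)=g_{0,t}(y)$ and the rational pair $(v,y')$ furnished by Lemma \ref{sophie} to represent the excursion interval around $t$ can be taken the same for both, so $\varphi_{0,t}(x)=\varphi_{v,t}(y')=\varphi_{0,t}(y)$. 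Therefore
$$\mathbb P\bigl(d(\varphi_{0,t}(x),\varphi_{0,t}(y))\ge\epsilon\bigr)\le\mathbb P\bigl(T>t,\,d(\varphi_{0,t}(x),\varphi_{0,t}(y))\ge\epsilon\bigr).$$

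The remaining work is to split $\{T>t\}$ by the ordering of $t$ with $\tau_{0,x},\tau_{0,y}$. On $\{t<\min(\tau_{0,x},\tau_{0,y})\}$ both points sit on $D_i$ with distance $\bigl||x|-|y|\bigr|<\epsilon$ for $y$ close to $x$ (empty when $x=0$). On $\{\min(\tau_{0,x},\tau_{0,y})\le t\le\max(\tau_{0,x},\tau_{0,y})\}$ the probability is bounded by $\mathbb P(\tau_{0,y}<t\le\tau_{0,x})+\mathbb P(\tau_{0,x}<t\le\tau_{0,y})$, which tends to $0$ by a.s.\ continuity of the Brownian hitting time in the starting point together with the absolute continuity of the law of $\tau_{0,x}$ (for $x=0$, use $\tau_{0,y}\to 0$ a.s.). On $\{t>\max(\tau_{0,x},\tau_{0,y})\}$ I bound by $\mathbb P(T>t,\tau_{0,x}<t)\le\mathbb P(T-\tau_{0,x}>\delta)+\mathbb P(\tau_{0,x}\in(t-\delta,t])$; sending $y\to x$ and using Lemma \ref{abed} kills the first term for each fixed $\delta$, and then $\delta\to 0$ kills the second.

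The main obstacle is the first step, namely arguing that on $\{T\le t\}$ a single rational pair $(v,y')$ from Lemma \ref{sophie} really can be chosen to serve both $x$ and $y$, so that the independent flipping variables driving the two $\varphi$-trajectories are literally identical. Once this is granted, Lemma \ref{abed} on $T\to\tau_{0,x}$ in probability does the analytic work and the splitting above is straightforward.
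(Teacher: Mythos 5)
Your proof is correct and follows essentially the same route as the paper: reduce to $y$ on the same ray as $x$, observe that $\varphi_{0,\cdot}(x)$ and $\varphi_{0,\cdot}(y)$ coincide after the coalescence time $T$ of $Z$ and stay at distance $d(x,y)$ before either point reaches $0$, and control the remaining window $\{\tau_{0,x}<t<T\}$ with Lemma \ref{abed}. The step you flag as the main obstacle is precisely Proposition \ref{100} (iii), already recorded in the paper, so you may cite it directly instead of re-deriving it through Lemma \ref{sophie}.
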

\noindent\begin{proof}
We take $s=0$. Notice that for all $z\in\R$, we have
$$Y_{0,t}(z)=z+W_t\ \textrm{if}\ \ 0\leq t\leq \tau_{0,z}.$$
Fix $\epsilon>0, x\in\ G^+\setminus\{0\}$ and $y$ in the same ray as $x$ with $|y|>|x|, d(y,x)\leq\frac{\epsilon}{2}$. Then $d(\varphi_{0,t}(x),\varphi_{0,t}(y))=d(x,y)\leq\frac{\epsilon}{2}$ for $0\leq t\leq \tau_{0,|x|}\wedge \tau_{0,|y|}$ ($=\tau_{0,|x|}$ in our case). By Proposition \ref{100} (iii), we have $\varphi_{0,t}(x)=\varphi_{0,t}(y)\ \textrm{if}\ t\geq T_{|x|,|y|}$. This shows that
$$\{d(\varphi_{0,t}(x),\varphi_{0,t}(y))\geq\epsilon\}\subset\{\tau_{0,|x|}<t<T_{|x|,|y|}\}\ a.s.$$
By Lemma \ref{abed}, $$\mathbb P(d(\varphi_{0,t}(x),\varphi_{0,t}(y))\geq\epsilon)\leq \mathbb P(\tau_{0,|x|}<t<T_{|x|,|y|})\rightarrow0\ \textrm{as}\ y\rightarrow x, |y|>|x|.$$
By the same way,
$$\mathbb P(d(\varphi_{0,t}(x),\varphi_{0,t}(y))\geq\epsilon)\leq \mathbb P(\tau_{0,|y|}<t<T_{|x|,|y|})\rightarrow0\ \textrm{as}\ y\rightarrow x, |y|<|x|.$$
The case $x\in G^{-}$ holds similarly.
\end{proof}
\noindent\begin{prop}
$\forall s< t< u, x\in G$:
$$\varphi_{s,u}(x)=\varphi_{t,u}(\varphi_{s,t}(x))\ \ a.s.$$
\end{prop}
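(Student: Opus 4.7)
The plan is to reduce the flow property for $\varphi$ to the flow property of the underlying Burdzy--Kaspi flow $Y$ (Proposition \ref{jnoun}) via Proposition \ref{100}, and then analyze the ray assignment case by case. First, I would derive the auxiliary identity $Z_{t,r}(\varphi_{s,t}(x))=Z_{s,r}(x)$ for all $r\geq t$ almost surely: by Proposition \ref{100}(ii), $\varepsilon(\varphi_{s,t}(x))|\varphi_{s,t}(x)|=Z_{s,t}(x)=Y_{s,t}(\varepsilon(x)|x|)$, so the flow property of $Y$ yields $Z_{t,r}(\varphi_{s,t}(x))=Y_{t,r}(Y_{s,t}(\varepsilon(x)|x|))=Y_{s,r}(\varepsilon(x)|x|)=Z_{s,r}(x)$. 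In particular $|\varphi_{t,u}(\varphi_{s,t}(x))|=|Z_{s,u}(x)|=|\varphi_{s,u}(x)|$, both points lie in the same half of $G$, and, whenever $t\leq g_{s,u}(x)$, one has $g_{t,u}(\varphi_{s,t}(x))=g_{s,u}(x)$. Only the ray assignment then remains to be checked.

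If $u\leq\tau_{s,x}$, the point $\varphi_{s,t}(x)=x+\vec{e}(x)\varepsilon(x)W_{s,t}$ sits on the same ray as $x$, so $\vec{e}(\varphi_{s,t}(x))=\vec{e}(x)$, $\varepsilon(\varphi_{s,t}(x))=\varepsilon(x)$, $\tau_{t,\varphi_{s,t}(x)}\geq u$, and the identity reduces to additivity of the white noise. If $u>\tau_{s,x}$ and $t>g_{s,u}(x)$, then no zero is crossed on $[t,u]$: on the one hand the segment $\varphi_{t,\cdot}(\varphi_{s,t}(x))$ falls under the previous case starting from $\varphi_{s,t}(x)\neq 0$, and on the other hand $\varphi_{s,u}(x)$ and $\varphi_{s,t}(x)$ belong to the same excursion of $Z_{s,\cdot}(x)$, hence share the same dyadic label $f_0=f(g_{s,u}(x),d_{s,u}(x))$ and the same random ray $\vec\gamma^{\pm}_{\cdot}(f_0)$, so the two expressions agree by direct inspection. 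The genuinely delicate case is $u>\tau_{s,x}$ with $t\leq g_{s,u}(x)$. Here I would use Lemma \ref{sophie} to choose rationals $(v,y)$ with $v<g_{s,u}(x)$ and $Z_{s,r}(x)=Z_{v,r}(y)$ for $r\geq g_{s,u}(x)$, giving $\varphi_{s,u}(x)=\varphi_{v,u}(y)$, and rationals $(v',y')$ with $v'<g_{s,u}(x)$ and $Z_{t,r}(\varphi_{s,t}(x))=Z_{v',r}(y')$ for $r\geq g_{s,u}(x)$, giving $\varphi_{t,u}(\varphi_{s,t}(x))=\varphi_{v',u}(y')$. The auxiliary identity forces $Z_{v,r}(y)=Z_{v',r}(y')$ for $r\geq g_{s,u}(x)$, so the $Z$-coalescence time $T_{v,v'}^{y,y'}$ of Lemma \ref{yor} is at most $g_{s,u}(x)\leq u$.

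The main obstacle is then to show that this $Z$-coalescence also forces $\varphi_{v,u}(y)=\varphi_{v',u}(y')$. The rational construction only directly couples each newly introduced $\varphi_{s_q,\cdot}(x_q)$ with a single previously built trajectory $\varphi_{s_{i^*},\cdot}(x_{i^*})$ at the stopping time $T_{s_0,\ldots,s_q}^{x_0,\ldots,x_q}$, so the required transitive fact --- that any two rational pairs whose $Z$-trajectories have coalesced share a common $\varphi$-trajectory from the $Z$-coalescence time onward --- has to be proved by induction on the enumeration $\psi$, using the stickiness of Lemma \ref{yor}(iv) to propagate the coupling through the chain of successive $i^*$'s. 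This same statement is in fact the well-definedness check for the extension of $\varphi_{s,\cdot}(x)$ to arbitrary $(s,x)\in\R\times G$ (independence of the choice of $(v,y)$), so it is a natural prerequisite to the whole construction; once it is in place, the flow identity follows immediately from the case analysis above.
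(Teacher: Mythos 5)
Your proposal is correct and takes essentially the same route as the paper: the same key identity $Z_{t,r}(\varphi_{s,t}(x))=Z_{s,r}(x)$ for $r\geq t$, obtained from the flow property of $Y$ (Proposition \ref{jnoun}), followed by the same case analysis according to whether $Z_{s,\cdot}(x)$ vanishes on $[t,u]$. The ``transitivity'' obstacle you isolate (that $Z$-coalescence of two rational trajectories forces their $\varphi$-trajectories to merge from the coalescence time on, which is also the well-definedness of the extension to general $(s,x)$) is exactly the point that the paper's cases 2 and 3 compress into ``it is clear by construction'', so you are, if anything, more explicit than the published argument.
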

\noindent\begin{proof} Set $y=\varphi_{s,t}(x)$. Then, with probability $1$, $\forall r\geq t,\ \  Y_{s,r}(\varepsilon(x)|x|)=Y_{t,r}(Y_{s,t}(\varepsilon(x)|x|))$ and so, a.s. $\forall r\geq t\ \  Z_{s,r}(x)=Z_{t,r}(y)$. All the equalities below hold a.s.\\
$\bullet$ 1st case: $u\leq\tau_{s,x}$. We have $\tau_{t,y}=\inf\{r\geq t,\  Z_{t,r}(y)=0\}=\inf\{r\geq t,\ Z_{s,r}(x)=0\}=\tau_{s,x}$. Consequently $u\leq\tau_{t,y}$ and $\varphi_{s,u}(x)=\vec{e}(x)|Z_{s,u}(x)|=\vec{e}(y)|Z_{t,u}(y)|=\varphi_{t,u}(y)=\varphi_{t,u}(\varphi_{s,t}(x))$.\\
$\bullet$ 2nd case: $t\leq \tau_{s,x}<u$. We still have $\tau_{t,y}=\tau_{s,x}$ and so $g_{t,u}(y)=g_{s,u}(x)$. It is clear by construction that: $\varphi_{s,u}(x)=\varphi_{t,u}(y)=\varphi_{t,u}(\varphi_{s,t}(x))$.\\
$\bullet$ 3rd case: $\tau_{s,x}< t,\tau_{t,y}\leq u$. Since $\tau_{t,y}$ is a common zero of $(Z_{s,r}(x))_{r\geq s}$ and $(Z_{t,r}(y))_{r\geq t}$ before $u$, it comes that $g_{t,u}(y)=g_{s,u}(x)$ and therefore $\varphi_{s,u}(x)=\varphi_{t,u}(y)=\varphi_{t,u}(\varphi_{s,t}(x))$.\\
$\bullet$ 4th case: $\tau_{s,x}< t,u<\tau_{t,y}$. In such a case, we have $\varphi_{t,u}(y)=\vec{e}(y)|Z_{t,u}(y)|=\vec{e}(y)|Z_{s,u}(x)|$. Since $r\longmapsto\ Z_{s,r}(x)$ does not touch $0$ in the interval $[t,u]$ and $\varphi_{s,t}(x)=y$, we easily see that $\varphi_{s,u}(x)=\vec{e}(y)|Z_{s,u}(x)|=\varphi_{t,u}(y)$.
\end{proof}
\begin{prop}\label{wajj}
$\varphi$ is a coalescing solution of $(E)$.
\end{prop}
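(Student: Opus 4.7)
The plan is to prove both claims by identifying $\varphi_{s,\cdot}(x)$ as a Walsh Brownian motion $W(\alpha_1,\ldots,\alpha_N)$ started at $x$ and then applying Freidlin--Sheu (Theorem \ref{palestine}) to derive $(E)$. Coalescence follows immediately from Proposition \ref{100}(iii), so the content is the SDE.

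For the identification, before $\tau_{s,x}$ the construction gives $\varphi_{s,t}(x) = x + \vec e(x)\varepsilon(x) W_{s,t}$, a standard Brownian motion on the ray containing $x$ killed at $0$. After $\tau_{s,x}$, since $Z_{s,\cdot}(x)$ is an SBM$(\alpha^+)$ started (now) at the origin, its excursions are positive with probability $\alpha^+$ and negative with probability $\alpha^-$, independently; conditionally on a positive excursion the construction assigns the ray $D_i$ ($i\le p$) with probability $\alpha_i/\alpha^+$ via $\vec\gamma^+$ (and analogously $D_j$, $j>p$, with probability $\alpha_j/\alpha^-$ via $\vec\gamma^-$ on negative excursions), so unconditionally each excursion lies on $D_i$ with probability $\alpha_i$, independently across excursions. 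This matches the construction of Walsh BM in Section \ref{enri}.

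Now Freidlin--Sheu applied to $f\in D(\alpha_1,\ldots,\alpha_N)$ kills the local-time term (since $\sum_i \alpha_i f_i'(0+) = 0$), giving
$$f(\varphi_{s,t}(x)) = f(x) + \int_s^t f'(\varphi_{s,u}(x))\, dB_u + \tfrac{1}{2}\int_s^t f''(\varphi_{s,u}(x))\, du,$$
where $B_u - B_s := |\varphi_{s,u}(x)| - |x| - \tilde L_{s,u}$. To identify $B$ in terms of $W$, I would apply It\^o--Tanaka with symmetric local time to $Z_{s,\cdot}(x)$, which satisfies (\ref{10}) with $\alpha = \alpha^+$: since $\widetilde{\textrm{sgn}}(0)=0$ annihilates the local-time contribution, this yields $|Z_{s,t}(x)| = |x| + \int_s^t \widetilde{\textrm{sgn}}(Z_{s,u}(x))\, dW_u + \tilde L_{s,t}$. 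Using $|\varphi_{s,\cdot}(x)| = |Z_{s,\cdot}(x)|$ and $\widetilde{\textrm{sgn}}(Z_{s,u}(x)) = \varepsilon(\varphi_{s,u}(x))$ on the full-measure set $\{\varphi_{s,u}(x) \ne 0\}$ (Proposition \ref{100}(ii)), I read off $dB_u = \varepsilon(\varphi_{s,u}(x))\, W(du)$. Substituting turns the $f'\,dB$ integral into $\int_s^t (\varepsilon f')(\varphi_{s,u}(x))\, W(du)$, which is precisely $(E)$ for $K = \delta_{\varphi}$.

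The main obstacle is the Walsh BM identification: one must check not only that each excursion receives the correct marginal law on the rays, but that the excursions are jointly distributed as an i.i.d.\ sequence (as in Section \ref{enri}). This rests on the independence of the $\vec\gamma^\pm_{s_0,x_0}(r)$ from $W$ and across dyadic labels $r$, combined with the fact that distinct excursions of $Z$ are indexed by distinct dyadic points $r = f(g_{s,t}(x),d_{s,t}(x))$, so that the construction and the Section \ref{enri} construction coincide in law. All subsequent steps are routine substitutions.
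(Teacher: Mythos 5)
Your proposal is correct, and the final steps (Freidlin--Sheu with the local-time term killed by $\sum_i\alpha_i f_i'(0+)=0$, Tanaka's formula for the symmetric local time of $Z$, annihilation of the drift term by $\widetilde{\textrm{sgn}}(0)=0$, and the identification $dB_u=\varepsilon(\varphi_{s,u}(x))W(du)$ up to a Lebesgue-null set of times) coincide with the paper's. The genuine difference is in how you identify $\varphi_{0,\cdot}(0)$ as a $W(\alpha_1,\cdots,\alpha_N)$. The paper does this by discretization: it extracts the embedded walk $\varphi^n_k=2^n\varphi_{T^n_k}$ at scale $2^{-n}$, computes its transition probabilities explicitly (using that the embedded walk of the SBM $Y$ has $Q(0,1)=\alpha^+$ and that the $\vec\gamma^\pm$ labels contribute the factors $\alpha_i/\alpha^\pm$), matches them with (\ref{sirine}), and invokes the Donsker-type Proposition \ref{9awed}. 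You instead match the construction in law with the excursion-flipping construction of Section \ref{enri}: the composition ``excursion sign of $Z$ is Bernoulli$(\alpha^+)$, then $\vec\gamma^\pm$ chooses the ray with probability $\alpha_i/\alpha^\pm$'' yields i.i.d.\ ray labels with law $\sum_i\alpha_i\delta_{\vec e_i}$ independent of $|Z|$, which is a reflected Brownian motion. This is a clean and more conceptual route, but it rests on an input the paper never uses, namely the It\^o--McKean description of $SBM(\alpha^+)$: conditionally on $|Z|$, the signs of the excursions are i.i.d.\ Bernoulli$(\alpha^+)$. That fact is classical but would need a citation or a short proof (it is not a consequence of anything established in the paper, whereas the paper's discrete argument is self-contained given Proposition \ref{9awed}). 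You correctly flag this identification as the main obstacle and correctly note that the dyadic indexing $r=f(g_{s,t}(x),d_{s,t}(x))$ separates distinct excursions, so once the excursion-sign fact is granted your argument closes.
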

\noindent\begin{proof} We use these notations: $Y_u:=Y_{0,u}(0),\ \varphi_u:=\varphi_{0,u}(0)$. We first show that $\varphi$ is an $W(\alpha_1,\cdots,\alpha_N)$ on $G$. Define for all $n\geq1$ : $T_0^n(Y)=0$,
\begin{eqnarray}
T_{k+1}^n(Y)&=&\inf\{r\geq T_{k}^n(Y),d(\varphi_{r},\varphi_{T_{k}^n})=\frac{1}{2^n}\}=\inf\{r\geq T_{k}^n(Y),|Y_r-Y_{T_{k}^n}|=\frac{1}{2^n}\}\nonumber\\
&=&\inf\{r\geq T_{k}^n(Y),||Y_r|-|Y_{T_{k}^n}||=\frac{1}{2^n}\}, k\geq 0.\nonumber\
\end{eqnarray}
Remark that $|Y|$ is a reflected Brownian motion and denote $T_{k}^n(Y)$ simply by $T_{k}^n$. From the proof of Proposition \ref{9awed}, $\lim\limits_{\substack{n \to +\infty}}\displaystyle\sup_{t\leq K}|T_{\lfloor 2^{2n}t\rfloor}^n-t|=0\ a.s.$ for all $K>0$. Set $\varphi_k^n=2^n\varphi_{T_{k}^n}$. Then, since almost surely $t\longrightarrow\varphi_t$ is continuous, a.s. $\forall t\geq 0, \lim\limits_{\substack{n \to +\infty}}\frac{1}{2^n}\varphi_{\lfloor 2^{2n}t\rfloor}^n=\varphi_{t}$. By Proposition \ref{9awed}, it remains to show that for all $n\geq0$, $(\varphi_k^n, k\geq0)$ is a Markov chain (started at 0) whose transition mechanism is described by (\ref{sirine}). If $Y_k^n=2^nY_{T_{k}^n}$, then, by the proof of Proposition \ref{9awed} (since SBM is a special case of $W(\alpha_1,\cdots,\alpha_N)$), for all $n\geq 0$, $(Y_k^n)_{k\geq1}$ is a Markov chain on $\mathbb Z$ started at $0$ whose law is described by $$Q(0,1)=1-Q(0,-1)=\alpha^+,\ \ Q(m,m+1)=Q(m,m-1)=\frac{1}{2} \ \forall m\neq0.$$
Let $k\geq1$ and $x_0,..,x_k\in G$ such that $x_0=x_k=0$ and $|x_{h+1}-x_h|=1$ if $h\in[0,k-1]$. We write 
$$\{x_h, x_h=0, h\in [1,k]\}=\{x_{i_0},..,x_{i_q}\},\ i_0=0<i_1<\cdots<i_q=k$$
and 
$$\{x_h, x_h\neq0, h\in[1,k]\}=\{x_h\}_{h\in[i_0+1,i_1-1]}\cup\cdots\cup \{x_h\}_{h\in[i_{q-1}+1,i_{k-1}]}.$$
Assume that $$\{x_h\}_{h\in[i_0+1,i_1-1]}\subset D_{j_0},\cdots, \{x_h\}_{h\in[i_{q-1}+1,i_{k-1}]}\subset D_{j_{q-1}}$$
and define $$A_h^n=(Y_h^n=\varepsilon(x_h)|x_h|),\ \ E=(\vec{e}(\varphi_{i_0+1}^n)=\vec{e}_{j_0},\cdots,\vec{e}(\varphi_{i_{q-1}+1}^n)=\vec{e}_{j_{q-1}}).$$ 
If $i\in [1,p]$, we have
$$(\varphi_{k+1}^n=\vec{e}_{i}, \varphi_{k}^n=x_k,\cdots,\varphi_{0}^n=x_0)=\displaystyle\bigcap_{h=0}^kA_h^n\bigcap (Y_{k+1}^n-Y_k^n=1)\bigcap E\bigcap (\vec{e}(\varphi_{k+1}^n)=\vec{e}_i)$$
 and $(\varphi_{k}^n=x_k,\cdots,\varphi_{0}^n=x_0)=\displaystyle\bigcap_{h=0}^kA_h^n\bigcap E$. Now 
$$\mathbb P(\varphi_{k+1}^n=\vec{e}_{i}|\varphi_0^n=x_0,\cdots,\varphi_k^n=0)=\frac{\alpha_i}{\alpha^+}\mathbb P(Y_{k+1}^n-Y_k^n=1|Y_k^n=0)=\alpha_i.$$
Obviously, the previous argument can be applied to show that the transition probabilities of $(\varphi^n_{k},k\geq0)$ are given by (\ref{sirine}) and so $\varphi$ is an $W(\alpha_1,\cdots,\alpha_N)$ on $G$ started at $0$. Using (\ref{Najia}) for $\varphi$, it follows that $\forall f\in D(\alpha_1,\cdots\alpha_N)$,
$$f(\varphi_{t})=f(0)+\int_{0}^{t}f'(\varphi_{s})dB_s+\frac{1}{2}\int_{0}^{t}f''(\varphi_{s})ds$$
where $$B_t=|\varphi_t|-\tilde L_t(|\varphi|)=|Y_t|-\tilde L_t(|Y|)=\int_{0}^{t}\widetilde{\textrm{sgn}}(Y_s)dY_s$$ 
by Tanaka's formula for symmetric local time. But $Y$ solves (\ref{10}) and therefore $\int_{0}^{t}\widetilde{\textrm{sgn}}(Y_s)dY_s=\int_{0}^{t}\widetilde{\textrm{sgn}}(Y_s)W(ds)$. Since a.s. $\widetilde{\textrm{sgn}}(Y_s)=\varepsilon(\varphi_s)$ for all $s\geq0$, it comes that $\forall f\in D(\alpha_1,\cdots\alpha_N)$,
$$f(\varphi_{0,t}(x))=f(x)+\int_{0}^{t}f'(\varphi_{0,s}(x))\varepsilon(\varphi_{0,s}(x))W(ds)+\frac{1}{2}\int_{0}^{t}f''(\varphi_{0,s}(x))ds$$
when $x=0$. Finally, by distinguishing the cases $t\leqslant\tau_{0,x}$ and $t>\tau_{0,x}$, we see that the previous equation is also satisfied for $x\neq0$.
\end{proof}
\begin{corl}
$K^{m^+,m^-}$ is a stochastic flow of kernels solution of $(E)$.
\end{corl}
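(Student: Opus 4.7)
The plan is to exploit the filtering identity from Proposition \ref{100}(i), namely $K_{s,t}^{m^+,m^-}f(x) = E[f(\varphi_{s,t}(x)) \mid \mathcal{G}]$ where $\mathcal{G} := \sigma(\mathcal{U}^+, \mathcal{U}^-, W)$, to transfer the properties established for $\varphi$ in Proposition \ref{wajj} over to the kernel $K^{m^+,m^-}$. Since $\varphi$ is already known to be a coalescing flow of mappings solving $(E)$, it will suffice to check that both the six axioms of Definition \ref{ghazel} and the SDE $(E)$ survive under the conditional expectation $E[\cdot \mid \mathcal{G}]$.

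For the SDE part, I would apply $E[\cdot \mid \mathcal{G}]$ to the identity established at the end of Proposition \ref{wajj}:
\[
f(\varphi_{s,t}(x)) = f(x) + \int_s^t f'(\varphi_{s,u}(x))\varepsilon(\varphi_{s,u}(x)) W(du) + \frac{1}{2}\int_s^t f''(\varphi_{s,u}(x)) du,
\]
valid for $f \in D(\alpha_1,\ldots,\alpha_N)$. Because $W$ is $\mathcal{G}$-measurable, the stochastic integral against $W$ commutes with $E[\cdot \mid \mathcal{G}]$: approximating the integrand $f'(\varphi_{s,\cdot})\varepsilon(\varphi_{s,\cdot})$ by elementary processes and using that $E[\varphi_{s,u}(x) \in A \mid \mathcal{G}] = K^{m^+,m^-}_{s,u}(x,A)$, we get
\[
E\!\left[\int_s^t f'(\varphi_{s,u}(x))\varepsilon(\varphi_{s,u}(x)) W(du) \,\Big|\, \mathcal{G}\right] = \int_s^t K_{s,u}^{m^+,m^-}(\varepsilon f')(x)\, W(du),
\]
and a conditional Fubini argument treats the Lebesgue integral analogously. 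This yields equation $(E)$ for $K^{m^+,m^-}$.

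For the flow axioms, measurability of $K^{m^+,m^-}$ is immediate from the construction in Section \ref{rayoum}. Stationarity (axiom 2) and independence of increments (axiom 3) follow from the analogous properties of $(W, \mathcal{U}^\pm_{\cdot,\cdot}(r))$ together with the fact that by construction the labels $\mathcal{U}^\pm$ used on disjoint time intervals are independent. Axioms 4--6 (continuity in $x$, decay at infinity, continuity at $t=0^+$) are inherited from the mapping flow $\varphi$ via the filtering identity and Lemma \ref{abedi}, since $E[(K_{0,t}^{m^+,m^-}f(x)-K_{0,t}^{m^+,m^-}f(y))^2] \leq E[(f(\varphi_{0,t}(x))-f(\varphi_{0,t}(y)))^2]$ by Jensen, and similarly for (5) and (6). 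The flow property (axiom 1) is the delicate point: for $s<t<u$, one writes
\[
K_{s,u}^{m^+,m^-} f(x) = E[f(\varphi_{t,u}(\varphi_{s,t}(x))) \mid \mathcal{G}]
\]
using the flow property of $\varphi$, and then uses the conditional independence structure of the construction --- namely that the excursion labels $\mathcal{U}^\pm_{\cdot,\cdot}(r)$ for excursions completed before $t$ are independent of those with $r \geq t$, given $W$ --- to factor this as $K_{s,t}^{m^+,m^-}(K_{t,u}^{m^+,m^-} f)(x)$ almost surely.

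The hardest step will be the flow property (axiom 1): the excursion-based definition glues $\mathcal{U}^\pm$ labels across rational initial conditions, and one must verify that for an arbitrary intermediate time $t$ (not a rational benchmark), the labels governing excursions of $Z_{s,\cdot}(x)$ that straddle $t$ are correctly redistributed between $K_{s,t}^{m^+,m^-}$ and $K_{t,u}^{m^+,m^-}$. This comes down to the observation that an excursion of $Z_{s,\cdot}(x)$ straddling $t$ must, after the renewal at its next zero, coincide with an excursion initiated from a rational approximation via Lemma \ref{sophie}, so the same dyadic label $f(g_{s,\cdot}(x),d_{s,\cdot}(x))$ governs the kernel both from the perspective of starting at $s$ and starting at $t$. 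The commutation of the stochastic integral with $E[\cdot \mid \mathcal{G}]$, while standard, is the other technical point requiring care.
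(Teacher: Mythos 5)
Your proposal follows essentially the same route as the paper: the author likewise invokes Proposition \ref{100}(i) together with Jensen's inequality to verify the flow-of-kernels axioms, and obtains equation $(E)$ for $K^{m^+,m^-}$ by conditioning the identity of Proposition \ref{wajj} on $\sigma(\mathcal U^+,\mathcal U^-,W)$ (delegating the commutation of the stochastic integral with the conditional expectation to Lemma 4.6 of the cited reference). Your write-up merely makes explicit the details that the paper leaves to that reference.
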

\noindent\begin{proof} By Proposition \ref{100} (i) and Jensen inequality, $K^{m^+,m^-}$ is a stochastic flow of kernels. The fact that  $K^{m^+,m^-}$ is a solution of $(E)$ is a consequence of the previous proposition and is similar to Lemma 4.6 \cite{MR2235172}.
\end{proof}
\begin{rems}\label{ta3}
\begin{enumerate}
\item[(i)] Define $\hat K_{s,t}(x,y)=K^{m^+,m^-}_{s,t}(x)\otimes \delta_{\varphi_{s,t}}(y)$. Then $\hat K$ is a stochastic flow of kernels on $G^2$.
\item[(ii)] If $(m^+,m^-)=(\delta_{(\frac{\alpha_1}{\alpha^+},\cdots,\frac{\alpha_p}{\alpha^+})},\delta_{(\frac{\alpha_{p+1}}{\alpha^-},\cdots,\frac{\alpha_N}{\alpha^-})})$, then
\begin{eqnarray}
K_{s,t}^{W}(x)&=&\delta_{x+\vec{e}(x)\varepsilon(x)W_{s,t}} 1_{\{t\leq \tau_{s,x}\}}\label{habla}\\
&+&\big(\sum_{i=1}^{p}\frac{\alpha_i}{\alpha^+}\delta_{\vec{e}_i|Z_{s,t}(x)|} 1_{\{Z_{s,t}(x)>0\}}+\sum_{i=p+1}^{N}\frac{\alpha_i}{\alpha^-}\delta_{\vec{e}_i|Z_{s,t}(x)|}1_{\{Z_{s,t}(x)\leq0\}}\big)1_{\{t> \tau_{s,x}\}}\nonumber\
\end{eqnarray}
is a Wiener solution of $(E)$.
\item[(iii)] If $(m^+,m^-)=\big(\displaystyle{\sum_{i=1}^{p}} \frac{\alpha_i}{\alpha^+} \delta_{(0,..,0,1,0,..,0)},\displaystyle{\sum_{i=p+1}^{N}} \frac{\alpha_i}{\alpha^-} \delta_{(0,..,0,1,0,..,0)}\big)$, then $K^{m^+,m^-}=\delta_\varphi$.
\end{enumerate}
\end{rems}
\section{Unicity of flows associated to $(E)$.}
Let $K$ be a solution of $(E)$ and fix $s\in\R, x\in G$. Then $(K_{s,t}(x))_{t\geq s}$ can be modified in such a way, a.s., the mapping $t\longmapsto K_{s,t}(x)$ is continuous from $[s,+\infty[$ into $\mathcal P(G)$. We will always consider this modification for $(K_{s,t}(x))_{t\geq s}$.
\begin{lemma}\label{z} Let $(K,W)$ be a solution of $(E)$. Then $\forall x\in G, s\in\R, a.s.$ $$K_{s,t}(x)=\delta_{x+\vec{e}(x)\varepsilon(x)W_{s,t}}, \ \textrm{if}\ \ s\leq t\leq \tau_{s,x}\ \textrm{where}\ \tau_{s,x}=\inf\{r\geq s,\ \varepsilon(x)|x|+W_{s,r}=0\}.$$
 \end{lemma}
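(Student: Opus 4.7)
The case $x=0$ is trivial since $\tau_{s,0}=s$ and the statement reduces to the initial condition $K_{s,s}(x)=\delta_{x}$. Assume henceforth $x\in D_i\setminus\{0\}$, set $a:=\varepsilon(x)|x|=\varepsilon_i|x|$, take $s=0$ without loss of generality, and write $\tau:=\tau_{0,x}$. The plan is to reduce $(E)$ at $x$ to a one-dimensional martingale problem driven by $W$ via the projection $\pi:G\to\R$, $\pi(y):=\varepsilon(y)|y|$, and then to identify the solution by a uniqueness argument.

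The key observation is that for any $\psi\in C_b^2(\R)$ with $\psi'(0)=0$, the composition $f_\psi(y):=\psi(\pi(y))$ lies in $D(\alpha_1,\ldots,\alpha_N)$: on each ray $f_{\psi,j}(h)=\psi(\varepsilon_j h)$ satisfies $f_{\psi,j}'(0+)=\varepsilon_j\psi'(0)=0$, and a direct chain-rule computation on $G^*$ gives $\varepsilon(y)f_\psi'(y)=\psi'(\pi(y))$ and $f_\psi''(y)=\psi''(\pi(y))$. Substituting into $(E)$ yields
$$K_{0,t}(\psi\circ\pi)(x)=\psi(a)+\int_0^t K_{0,u}(\psi'\circ\pi)(x)\,dW_u+\tfrac{1}{2}\int_0^t K_{0,u}(\psi''\circ\pi)(x)\,du,$$
while It\^o's formula applied to $\psi(a+W_t)$ produces the same identity with $K_{0,u}(x)$ replaced by the Dirac $\delta_{a+W_u}$.

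I would then set $D_t^\psi:=K_{0,t\wedge\tau}(\psi\circ\pi)(x)-\psi(a+W_{t\wedge\tau})$, subtract the two identities to obtain a linear integral equation for $D^\psi$ with coefficients controlled by the uniform bounds on $\psi',\psi''$, and apply a Gronwall argument over a countable family of $\psi$'s separating probability measures on $\R$ to conclude $\pi_*K_{0,t\wedge\tau}(x)=\delta_{a+W_{t\wedge\tau}}$ almost surely. To promote this pushforward identification to the kernel itself, I would run a second round of the scheme with test functions vanishing on $D_i$ (for instance $f_j(h)=c_j h^2\chi(h)$ with $c_i=0$ and $\chi$ a smooth cutoff, automatically in $D(\alpha_1,\ldots,\alpha_N)$ because $f_j'(0+)=0$) and invoke the Walsh-Brownian identification of the one-point motion (Proposition \ref{vi}) to force $K_{0,t\wedge\tau}(x)$ to charge only $D_i$; combined with the pushforward identification, this pins the kernel to $\delta_{x+\vec{e}(x)\varepsilon(x)W_{0,t\wedge\tau}}$.

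The main obstacle is the Gronwall step: the class $\{\psi\in C_b^2:\psi'(0)=0\}$ is not closed under $\psi\mapsto\psi''$, so the equation for $D^\psi$ does not close within that class. To overcome this I would work with the trigonometric eigenfunctions $\psi_\lambda(z)=\cos(\lambda z)$, which satisfy $\psi_\lambda'(0)=0$ and $\psi_\lambda''=-\lambda^2\psi_\lambda$, paired with a smoothly truncated sine companion (needed because $\sin(\lambda\,\cdot\,)\notin D(\alpha_1,\ldots,\alpha_N)$ unless $\alpha^+=1/2$) to generate $\psi_\lambda'$, thereby closing the system as a finite-dimensional coupled SDE on $\R^2$ driven by $W$, to which standard strong uniqueness applies up to $\tau$.
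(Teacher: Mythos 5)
Your reduction via the folding map $\pi(y)=\varepsilon(y)|y|$ is correct as far as it goes: for $\psi\in C_b^2(\R)$ with $\psi'(0)=0$ one indeed has $\sum_j\alpha_jf_{\psi,j}'(0+)=(2\alpha^+-1)\psi'(0)=0$, so $\psi\circ\pi\in D(\alpha_1,\cdots,\alpha_N)$, and the pushforward identity you write down is what $(E)$ gives. But the argument breaks at exactly the point you flag. To close the Gronwall iteration you need equation $(E)$ for the test function $\psi_\lambda'\circ\pi$ with $\psi_\lambda'=-\lambda\sin(\lambda\cdot)$, and this function is not in the domain since $(\psi_\lambda')'(0)=-\lambda^2\neq0$ (unless $\alpha^+=\frac{1}{2}$). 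Any smooth truncation of $\sin(\lambda z)$ near $z=0$ restores admissibility but destroys the eigenfunction identity $\psi''=-\lambda^2\psi$ precisely on a neighbourhood of the origin, and before the lemma is proved you have no control on the mass that $K_{0,u}(x)$ may place near $0$ for $u<\tau$ --- ruling out such mass is essentially the statement being proved, so the error terms created by the truncation cannot be absorbed. The ``finite-dimensional coupled SDE on $\R^2$'' therefore never materialises: the equation for $K_{0,t}(\psi_\lambda\circ\pi)(x)$ is linear, but its coefficient $K_{0,u}(\psi_\lambda'\circ\pi)(x)$ is a genuinely new unknown for which no equation is available. Your second round (forcing the kernel onto the single ray $D_i$) has the same defect, since $(c_jh^2\chi(h))'$ does not vanish at the relevant order and is again not of the original form.

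The paper's proof sidesteps all of this with two moments rather than a separating family of test functions. It localises at $\tilde\tau_x=\inf\{r : K_{0,r}(x)(\cup_{j\neq i}D_j)>0\}$ --- a stopping time defined from $K$, not from $W$ --- so that before $\tilde\tau_x$ the admissible linear test function $f(h\vec{e}_j)=\beta_jh$ (with $\beta_i=1$ and $\sum_j\beta_j\alpha_j=0$) reads off the first moment $\int_{D_i\setminus\{0\}}|y|K_{0,t}(x,dy)=|x|+W_t$; the radial function $f_k(y)=|y|^2e^{-|y|/k}$, admissible because its radial derivative vanishes at $0$, then yields the second moment, and the two together give $\int_{D_i\setminus\{0\}}(|y|-|x|-W_t)^2K_{0,t}(x,dy)=0$, i.e.\ a Dirac mass, after which $\tilde\tau_x=\tau_{0,x}$ follows. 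If you want to rescue your scheme you should import both devices: the kernel-defined stopping time in place of $\tau$, and the variance-zero argument in place of the eigenfunction closure.
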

\noindent\begin{proof} 
%(i) We fix $s=0$ and $x\in G$. We will check that with probability $1$, $$\forall f\in C_0(G),\ t\geq0,\ \lim\limits_{\substack{u \to t}}K_{0,u}f(x)=K_{0,t}f(x).$$
%Let $(g_n)_{n\geq1}$ be a sequence of functions dense in $C_0(G)$. Recall the definition of $D'(\alpha_1,\cdots,\alpha_N)$ from (\ref{wang}) and that $P_{\frac{1}{n}}g_n\in D'(\alpha_1,\cdots,\alpha_N)$ for all $n\geq 1$. Since $(K,W)$ satisfies $(E)$, a.s for all $n\geq1, t\geq0$
%$$\lim\limits_{\substack{u \to t}}K_{0,u}(P_{\frac{1}{n}}g_n)(x)=K_{0,t}(P_{\frac{1}{n}}g_n)(x).$$
%As $\{P_{\frac{1}{n}}g_n, n\geq1\}$ is dense in $C_0(G)$, it is easy to conclude.\\
We follow \cite{MR2235172} (Lemma 3.1). Assume that $x\neq0, x\in D_i$, $1\leq i\leq p$ and take $s=0$. Let $\beta_i=1$ and consider a set of numbers $(\beta_{j})_{1\leq j\leq N,j\neq i}$ such that $\displaystyle{\sum_{j=1}^{N}}\beta_{j}\alpha_j=0$. If $f(h\vec{e}_j) = \beta_jh$ for all $1\leq j\leq N$, then $f\in D(\alpha_1,\cdots,\alpha_N)$. Set $\tilde\tau_x=\inf\{r; K_{0,r}(x)(\cup_{j\neq i}D_j)>0\}$ and apply $f$ in $(E)$ to get 
\begin{equation}\label{ahh}
 \int_{D_i\setminus\{0\}}|y|K_{0,t}(x,dy)=|x|+W_t\ \textrm{for all}\ t\leq \tilde\tau_x.
\end{equation}
 By applying $f_k(y)=|y|^2 e^{\frac{-|y|}{k}},\ k\geq 1$ in $(E)$, we have for all $t\geq0$,
$$K_{0,t\wedge \tilde\tau_x}f_k(x)=f_k(x)+\int_0^t 1_{[0,\tilde\tau_x]}(u)K_{0,u}(\varepsilon f_k')(x)W(du) + \frac{1}{2}\int_0^{t\wedge \tilde\tau_x}K_{0,u}f_k''(x)du.$$
As $k\rightarrow\infty$, $K_{0,t\wedge \tilde\tau_x}f_k(x)$ tends to $\int_0^t |y|^2K_{0,t\wedge \tilde\tau_x}(x,dy)$ by monotone convergence. Let $A>0, xe^{-x}\leq A$ for all $x\geq0$. Since
$|f_{k}'(y)-2|y||\leq (4+A)|y|$,
$$\int_0^t 1_{[0,\tilde\tau_x]}(u)K_{0,u}(\varepsilon f_k')(x)W(du)\longrightarrow \int_0^t 1_{[0,\tilde\tau_x]}(u)\int_G 2|y|K_{0,u}(x,dy)W(du)$$
as $k\rightarrow\infty$ using (\ref{ahh}) and dominated convergence for stochastic integrals (\cite{MR1725357} page 142). From
$|f_k''(y)|\leq 2e^{\frac{-1}{k}|y|}+\frac{4+A}{k}|y|$, we get $\int_0^{t\wedge \tilde\tau_x}K_{0,u}f_k''(x)du\longrightarrow0$ as $k\rightarrow\infty$. By identifying the limits, we have
$$\int_{D_i\setminus\{0\}}(|y|-|x|-W_t)^2K_{0,t}(x,dy)=0\ \ \forall \ t\leq\tilde\tau_x.$$
This proves that for $t\leq\tilde\tau_x,\ K_{0,t}(x)=\delta_{x+\vec{e}(x)W_t}$. The fact that $\tau_{0,x}=\tilde\tau_x$ easily follows.
\end{proof}
\noindent The previous lemma entails the following
\begin{corl}\label{mawloud}
If $(K,W)$ is a solution of $(E)$, then $\sigma(W)\subset\sigma (K)$.
\end{corl}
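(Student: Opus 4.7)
The plan is to extract the white noise $W$ as a measurable functional of $K$ by feeding suitable deterministic points into Lemma \ref{z}, so that every increment $W_{s,t}$ becomes $\sigma(K)$-measurable; since $W$ is generated by these increments, the inclusion follows.

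Fix the ray $D_1$, set $\eta=\varepsilon_1\in\{-1,1\}$, and consider the sequence $x_n=n\vec{e}_1$ for $n\geq 1$. Lemma \ref{z} provides, for each $n$, a $\mathbb P$-full event on which
$$K_{s,t}(x_n)=\delta_{x_n+\eta \vec{e}_1 W_{s,t}}\quad \text{for all } s\leq t\leq \tau_{s,x_n},$$
with $\tau_{s,x_n}=\inf\{r\geq s:\eta n+W_{s,r}=0\}$. Since on that event the supporting point of $K_{s,t}(x_n)$ lies in $D_1$, taking the first $|\cdot|$-moment yields the clean identity
$$\int_G|y|\,K_{s,t}(x_n)(dy)-n=\eta W_{s,t}\quad\text{for }s\leq t\leq \tau_{s,x_n}.$$

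Next I would use that $W_{s,\cdot}$ is continuous and hence bounded on every compact interval, so $\tau_{s,x_n}\to +\infty$ almost surely as $n\to\infty$. Consequently, for any fixed $(s,t)$ with $s\leq t$, the sequence $\int_G|y|\,K_{s,t}(x_n)(dy)-n$ is almost surely eventually constant and equal to $\eta W_{s,t}$, whence
$$W_{s,t}=\eta\lim_{n\to\infty}\Bigl(\int_G|y|\,K_{s,t}(x_n)(dy)-n\Bigr)\quad\text{a.s.}$$
The right-hand side is a measurable functional of $K$, so each $W_{s,t}$ is $\sigma(K)$-measurable, giving $\sigma(W)\subset\sigma(K)$.

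The only technical point is null-set management: Lemma \ref{z} yields an a.s.\ identity depending on $(n,s,x)$, so one first runs the argument along a countable collection (e.g.\ $n\in\N$ and rational pairs $(s,t)$), combines the exceptional sets into a single $\mathbb P$-null set, and then extends to arbitrary $(s,t)$ by the joint continuity of $(s,t)\mapsto W_{s,t}$. I do not foresee any deeper difficulty; the substance of the argument is already encoded in Lemma \ref{z}.
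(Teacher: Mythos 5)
Your proof is correct and follows essentially the same route as the paper: Lemma \ref{z} applied at points $x_n=n\vec{e}_1$, recovering $W_{s,t}$ as $\eta\bigl(\int_G|y|\,K_{s,t}(x_n)(dy)-n\bigr)$ for $t\leq\tau_{s,x_n}$ (the paper phrases this as $W_t=K_{0,t}f(x)-|x|$ for a test function $f$ with $f_1(h)=h$), and letting $n\to\infty$ so that $\tau_{s,x_n}\to\infty$. Your handling of the null sets and of general $(s,t)$ is, if anything, slightly more explicit than the paper's.
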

\begin{proof}
 For all $x\in D_1$, we have $K_{0,t}(x)=\delta_{\vec{e_1}(|x|+W_{t})}$ if $t\leq\tau_{0,x}$. If $f$ is a positive function on $G$ such that $f_1(h)=h$, then $W_t=K_{0,t}f(x)-|x|$ for all $t\leq\tau_{0,x}, x\in D_1$. By considering a sequence $(x_k)_{k\geq0}$ converging to $\infty$, this shows that $\sigma(W_t)\subset\sigma(K_{0,t}(y), y\in D_1)$.
\end{proof}

\subsection{Unicity of the Wiener solution.}
\noindent In order to complete the proof of Theorem \ref{ghasseni}, we will prove the following
\begin{prop}\label{ghassen}
Equation $(E)$ has at most one Wiener solution: If $K$ and $K'$ are two Wiener solutions, then for all $s\leq t, x\in G, K_{s,t}(x)=K'_{s,t}(x)$ a.s. 
\end{prop}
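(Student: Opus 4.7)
The plan is to use Wiener chaos decomposition. For a Wiener solution $K$, the bounded $\sigma(W)$-measurable random variable $K_{s,t}f(x)$ admits an expansion $K_{s,t}f(x) = \sum_{k\geq 0} I_k(h_k^{s,t,f,x})$ in multiple Wiener integrals $I_k$ with respect to $W$, with deterministic kernels $h_k^{s,t,f,x}$. I will argue that these kernels are completely determined by the semigroup $P_t$ of $W(\alpha_1,\ldots,\alpha_N)$ and the sign map $\varepsilon$, quantities that coincide for $K$ and $K'$ since by Proposition \ref{vi} the semigroup $P_t$ is uniquely prescribed by $(E)$. Once this is established, $K_{s,t}f(x)=K'_{s,t}f(x)$ almost surely for every $f$ in a countable dense family; continuity in $t$ of both $t\mapsto K_{s,t}(x)$ and $t\mapsto K'_{s,t}(x)$ into $\mathcal P(G)$ (the convention adopted at the start of Section~4) then upgrades this to $K_{s,t}(x)=K'_{s,t}(x)$ a.s.

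The key identity is obtained as follows. For $f\in D'(\alpha_1,\ldots,\alpha_N)$ (cf.\ (\ref{wang})), Proposition~\ref{vi} gives $P_{t-u}f\in D(\alpha_1,\ldots,\alpha_N)$ and $\partial_u P_{t-u}f=-\frac{1}{2}(P_{t-u}f)''$ for $u<t$. Applying $(E)$ and It\^o's formula to the process $u\mapsto K_{s,u}(P_{t-u}f)(x)$, the two $du$ contributions cancel and leave
\begin{equation}\label{uniqiter}
K_{s,t}f(x) = P_{t-s}f(x) + \int_s^t K_{s,u}\bigl(\varepsilon(P_{t-u}f)'\bigr)(x)\, W(du).
\end{equation}
Because the flow property combined with the independence of $K_{u,t}$ and $\mathcal F^W_u$ yields $E[K_{s,t}f(x)\mid \mathcal F^W_u]=K_{s,u}(P_{t-u}f)(x)$, the formula (\ref{uniqiter}) stopped at time $u$ is exactly the martingale representation of this conditional expectation. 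Iterating (\ref{uniqiter}) $n$ times produces
\begin{equation*}
K_{s,t}f(x) = \sum_{k=0}^{n-1} I_k\bigl(h_k^{s,t,f,x}\bigr) + R_n,
\end{equation*}
with
\begin{equation*}
h_k^{s,t,f,x}(u_1,\ldots,u_k) = \bigl[P_{u_1-s}\bigl(\varepsilon P_{u_2-u_1}(\varepsilon\cdots\varepsilon P_{t-u_k}f)'\cdots\bigr)'\bigr](x)
\end{equation*}
on $\{s<u_1<\cdots<u_k<t\}$, and $R_n$ an $n$-fold iterated Wiener integral of a uniformly bounded integrand; It\^o isometry and the volume $(t-s)^n/n!$ of the simplex give $\|R_n\|_{L^2}^2\leq \|f\|_\infty^2(t-s)^n/n!\to 0$, identifying $(h_k^{s,t,f,x})_{k\geq 0}$ as the chaos kernels.

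The main technical obstacle lies in the iteration step: the intermediate integrands $\varepsilon(P_{t-u}f)'$ are typically \emph{not} in $D(\alpha_1,\ldots,\alpha_N)$ (they are discontinuous at $0$), so the equation $(E)$ cannot be invoked on them directly. I would resolve this by regularization: smooth $\varepsilon(P_{t-u}f)'$ by an additional $P_\delta$, which places the result in $D'(\alpha_1,\ldots,\alpha_N)$ by part~(i) of the proof of Proposition~\ref{vi}, apply (\ref{uniqiter}) to the smoothed function, and let $\delta\to 0$ using the Feller property of $P$ and dominated convergence for stochastic integrals. With this point settled, the kernels $h_k^{s,t,f,x}$ depend only on $P_t$, $\varepsilon$, $f$ and $x$, hence coincide for $K$ and $K'$; density of $D'(\alpha_1,\ldots,\alpha_N)$ in $C_0(G)$ (step~(iii) of the proof of Proposition~\ref{vi}) then finishes the argument.
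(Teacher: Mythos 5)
Your proposal follows essentially the same route as the paper: your key identity is exactly the paper's (\ref{valerie}), the iteration producing the chaos kernels $P_{s_1}(D(P_{s_2-s_1}\cdots D(P_{t-s_n}f)))(x)$ is the paper's, and your regularization-by-$P_\delta$ fix for the fact that $\varepsilon(P_{t-u}f)'$ falls outside $D(\alpha_1,\cdots,\alpha_N)$ is precisely how the paper handles it, via the class $\mathcal S$ (stable under $f\mapsto (P_tf)'$ thanks to the derivation formula (\ref{key})) and the approximation $g=P_{\epsilon}f$ with $\epsilon\to 0$. The only substantive difference is that the paper establishes (\ref{valerie}) by a careful Riemann-sum discretization (the terms $A_1(n),A_2(n),A_3(n)$) rather than by a direct It\^o-formula computation on $u\mapsto K_{s,u}(P_{t-u}f)(x)$, which is a matter of rigor rather than of strategy.
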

\noindent\begin{proof} Denote by $P$ the semigroup of $W(\alpha_1,\cdots,\alpha_N)$, $A$ and $D(A)$ being respectively its generator and its domain on $C_0(G)$. Recall the definition of $D'(\alpha_1,\cdots,\alpha_N)$ from (\ref{wang}) and that
$$\forall t> 0 \ \ \ P_t(C_0(G))\subset D'(\alpha_1,\cdots,\alpha_N)\subset D(A)$$
(see Proposition \ref{vi}). Define
\\
$\mathcal S=\{f : G\longrightarrow{\R} : f, f',f''\in C_b(G^*)\  \textrm{and are extendable by continuity at 0 on each ray,}\ \ \ \ \\ \ \ \ \ \ \ \ \lim_{x\rightarrow\infty}f(x)=0\}$.\\
For $t>0, h$ a measurable bounded function on $G^{*}$, let $\lambda_t h(x)=2p_t h_j(|x|)$, if $x\in D_j$, where $h_j$ is the extension of $h_j$ that equals $0$ on $]-\infty,0]$. Then, the following identity can be easily checked using the explicit expression of $P$:
\begin{equation}\label{key}
(P_tf)'=-P_tf'+\lambda_t f'\ \ \textrm{on}\ G^*\ \ \textrm{for all}\ f\in\mathcal S.
\end{equation}
Fix $f\in\mathcal S$. We will verify that $(P_{t}f)'\in\mathcal S$. For $x=h\vec{e}_j\in G^*$, we have
$$(P_{t}f)'(x)=-2\sum_ {i=1}^{N}\alpha_i \int_{\R}f'_i(y-h)p_t(0,y)dy+\int_{\R}f'_j(y+h)p_t(0,y)dy+\int_{\R}f'_j(y-h)p_t(0,y)dy$$
 Clearly $(P_{t}f)'\in C_b(G^*)$ and is extendable by continuity at 0 on each ray. Furthermore, a simple integration by parts yields 
$$\int_{\R}f'_j(y+h)p_t(0,y)dy=C\int_{\R}f_j(y+h)yp_t(0,y)dy\ \textrm{for some}\ C\in\R$$
and since $\lim_{x\rightarrow\infty}f(x)=0$, we get $\lim_{x\rightarrow\infty}(P_tf)'(x)=0$. It is also easy to check that $(P_{t}f)'', (P_{t}f)''' \in C_b(G^*)$ and are extendable by continuity at 0 on each ray which shows that $(P_tf)'\in\mathcal S$.\\ 
Let $(K,W)$ be a stochastic flow that solves $(E)$ (not necessarily a Wiener flow) and fix $x=h\vec{e}_j\in G^*$. Our aim now is to establish the following identity
\begin{equation}\label{valerie}
K_{0,t}f(x)=P_{t}f(x)+\int_{0}^{t}K_{0,u}(D(P_{t-u}f))(x)W(du)
\end{equation}
where $Dg(x)=\varepsilon(x).g'(x)$. Note that $\int_{0}^{t}K_{0,u}(D(P_{t-u}f))(x)W(du)$ is well defined. In fact 
$$\int_{0}^{t}E[K_{0,u}(D(P_{t-u}f))(x)]^2du\leq \int_{0}^{t}P_u((D(P_{t-u}f))^2)(x)du\leq \int_{0}^{t}||(P_{t-u}f)'||^2_{\infty}du$$
and the right-hand side is bounded since (\ref{key}) is satisfied and $f'$ is bounded. Set $g=P_{\epsilon}f=P_{\frac{\epsilon}{2}}P_{\frac{\epsilon}{2}}f$. Then, since $P_{\frac{\epsilon}{2}}f\in C_0(G)$ $(\lim_{x\rightarrow\infty}P_{\frac{\epsilon}{2}}f(x)=0$ comes from $\lim_{x\rightarrow\infty}f(x)=0$), we have $g\in D'(\alpha_1,\cdots,\alpha_N)$. Now
$$K_{0,t}g(x)-P_{t}g(x)-\int_{0}^{t}K_{0,u}(D(P_{t-u}g))(x)W(du)=\displaystyle\sum_{p=0}^{n-1}(K_{0,\frac{{(p+1)t}}{n}}P_{t-\frac{{(p+1)t}}{n}}g-K_{0,\frac{{p}t}{n}}P_{t-\frac{{p}t}{n}}g)(x)$$
$$-\displaystyle\sum_{p=0}^{n-1}\int_{\frac{pt}{n}}^{\frac{(p+1)t}{n}}K_{0,u}D((P_{t-u}-P_{t-\frac{(p+1)t}{n}})g)(x)W(du)-\displaystyle\sum_{p=0}^{n-1}\int_{\frac{pt}{n}}^{\frac{(p+1)t}{n}}K_{0,u}D(P_{t-\frac{(p+1)t}{n}}g)(x)W(du).$$
For all $p\in\{0,..,n-1\}$, $g_{p,n}=P_{t-\frac{(p+1)t}{n}}g\in D'(\alpha_1,\cdots,\alpha_N)$ and so by replacing in $(E)$, we get
$$\int_{\frac{pt}{n}}^{\frac{(p+1)t}{n}}K_{0,u}Dg_{p,n}(x)W(du)=K_{0,\frac{(p+1)t}{n}}g_{p,n}(x)-K_{0,\frac{pt}{n}}g_{p,n}(x)-\int_{\frac{pt}{n}}^{\frac{(p+1)t}{n}}K_{0,u}Ag_{p,n}(x)du$$
$$=K_{0,\frac{(p+1)t}{n}}g_{p,n}(x)-K_{0,\frac{pt}{n}}g_{p,n}(x)-\frac{t}{n}K_{0,\frac{pt}{n}}Ag_{p,n}(x)-\int_{\frac{pt}{n}}^{\frac{(p+1)t}{n}}(K_{0,u}-K_{0,\frac{pt}{n}})Ag_{p,n}(x)du$$
Then we can write
$$K_{0,t}g(x)-P_{t}g(x)-\int_{0}^{t}K_{0,u}(D(P_{t-u}g))(x)W(du)=A_1(n)+A_2(n)+A_3(n),$$
where
$$A_1(n)=-\displaystyle\sum_{p=0}^{n-1}K_{0,\frac{pt}{n}}[P_{t-\frac{pt}{n}}g-P_{t-\frac{(p+1)t}{n}}g-\frac{t}{n}.AP_{t-\frac{(p+1)t}{n}}g](x),$$
$$A_2(n)=-\displaystyle\sum_{p=0}^{n-1}\int_{\frac{pt}{n}}^{\frac{(p+1)t}{n}}K_{0,u}D((P_{t-u}-P_{t-\frac{(p+1)t}{n}})g)(x)W(du),\ \ $$
$$A_3(n)=\displaystyle\sum_{p=0}^{n-1}\int_{\frac{pt}{n}}^{\frac{(p+1)t}{n}}(K_{0,u}-K_{0,\frac{pt}{n}})AP_{t-\frac{(p+1)t}{n}}g(x)du.$$
 Using $||K_{0,u}f||_{\infty}\leq ||f||_{\infty}$ if $f$ is a bounded measurable function, we obtain
$$|A_1(n)|\leq\displaystyle\sum_{p=0}^{n-1}||P_{t-\frac{(p+1)t}{n}}[P_{\frac{t}{n}}g-g-\frac{t}{n}.Ag]||_{\infty}
\leq n||P_{\frac{t}{n}}g-g-\frac{t}{n}.Ag||_{\infty},$$
with
$$n||P_{\frac{t}{n}}g-g-\frac{t}{n}.Ag||_{\infty}=t.||\frac{P_{t_n}g-g}{t_n}-Ag||_{\infty}\ (t_n:=\frac{t}{n}).$$
Since $g\in D(A)$, this shows that $A_1(n)$ converges to $0$ as $n\rightarrow\infty$. Note that $A_2(n)$ is the sum of orthogonal terms in $L^2(\Omega)$. Consequently
$$||A_2(n)||^2_{L^2(\Omega)}=\displaystyle\sum_{p=0}^{n-1}||\int_{\frac{pt}{n}}^{\frac{(p+1)t}{n}}K_{0,u}D((P_{t-u}-P_{t-\frac{(p+1)t}{n}})g)(x)W(du)||^2_{L^2(\Omega)}.$$
By applying Jensen inequality, we arrive at $$||A_2(n)||^2_{L^2(\Omega)}\leq\displaystyle\sum_{p=0}^{n-1}\int_{\frac{pt}{n}}^{\frac{(p+1)t}{n}}P_{u}V^2_u(x)du$$
where $V_u=(P_{t-u}g)'-(P_{t-\frac{(p+1)t}{n}}g)'$. By (\ref{key}), one can decompose $V_u$ as follows:
$$V_u=X_u+Y_u;\ \ X_u=-P_{t-u}g'+P_{t-\frac{(p+1)t}{n}}g',\ \  Y_u=\lambda_{t-u}g'- \lambda_{t-\frac{(p+1)t}{n}}g'$$
Using the trivial inequality $(a+b)^2\leq 2a^2+2b^2$, we obtain: $P_{u}V^2_u(x)\leq 2P_uX^2_u(x)+2P_uY^2_u(x)$ and so $$||A_2(n)||^2_{L^2(\Omega)}\leq 2B_1(n)+2B_2(n)$$
where $B_1(n)=\displaystyle\sum_{p=0}^{n-1}\int_{\frac{pt}{n}}^{\frac{(p+1)t}{n}}P_{u}X^2_u(x)du,\ \ B_2(n)=\displaystyle\sum_{p=0}^{n-1}\int_{\frac{pt}{n}}^{\frac{(p+1)t}{n}}P_{u}Y^2_u(x)du$.\\
If $p\in[0,n-1]$ and $u\in [{\frac{pt}{n}},{\frac{(p+1)t}{n}}]$, then $P_uX^2_u(x)\leq P_{u+t-\frac{p+1}{n}t}(g'-P_{\frac{p+1}{n}t-u}g')^2(x)$. The change of variable $v=(p+1)t-nu$ yields
\begin{eqnarray}
B_1(n)&\leq&\int_{0}^{t}P_{t-\frac{v}{n}}(P_{\frac{v}{n}}g'-g')^2(x)dv\nonumber\\
&\leq&\int_{0}^{t}(P_{t}g'^2(x)-2P_{t-\frac{v}{n}}(g'P_{\frac{v}{n}}g')(x)+P_{t-\frac{v}{n}}g'^2(x))dv.\nonumber\
\end{eqnarray}
By writing $P_{t-\frac{v}{n}}(g'P_{\frac{v}{n}}g')(x)$ as a function of $p$, we prove that $\lim_{n\rightarrow\infty}P_{t-\frac{v}{n}}(g'P_{\frac{v}{n}}g')(x)=P_{t}g'^2(x)$. Since $g'$ is bounded, by dominated convergence this shows that $B_1(n)$ tends to 0 as $n\rightarrow+\infty$. For $B_2(n)$, we write
$$P_{u}Y^2_u(x)=2\displaystyle\sum_{i=1}^{N}\alpha_ip_u((Y_u^2)_i)(-|x|)+p_u((Y_u^2)_j)(|x|)-p_u((Y_u^2)_j)(-|x|)$$
where $(Y_u)_i=2p_{t-u}g_i'-2p_{t-\frac{(p+1)t}{n}}g_i'$, defined on $\R_+^*$. It was shown before that this quantity tends to 0 as $n\rightarrow+\infty$ when $(p,g'_i)$ is replaced by $(P,g')$ in general and consequently $B_2(n)$ tends to 0 as $n\rightarrow+\infty$. Now

$$||A_3(n)||_{L^2(\Omega)}\leq \displaystyle\sum_{p=0}^{n-1}||\int_{\frac{pt}{n}}^{\frac{(p+1)t}{n}}(K_{0,u}-K_{0,\frac{pt}{n}})AP_{t-\frac{(p+1)t}{n}}g(x)du||_{L^2(\Omega)}.$$
Set $h_{p,n}=AP_{t-\frac{(p+1)t}{n}}g$. Then $h_{p,n}\in D'(\alpha_1,\cdots,\alpha_N)$ for all $p\in[0,n-1]$ (if $p=n-1$ remark that $h_{p,n}=P_{\frac{\epsilon}{2}}AP_{\frac{\epsilon}{2}}f$). By the Cauchy-Schwarz inequality   

$$||A_3(n)||_{L^2(\Omega)}\leq\sqrt{t}\left\{\sum_{p=0}^{n-1}\int_{\frac{pt}{n}}^{\frac{(p+1)t}{n}}E[((K_{0,u}-K_{0,\frac{pt}{n}})h_{p,n}(x))^2]du\right\}^{\frac{1}{2}}.$$
If $u\in [\frac{pt}{n},\frac{(p+1)t}{n}]$:
\begin{eqnarray}
E[((K_{0,u}-K_{0,\frac{pt}{n}})h_{p,n}(x))^2]&\leq& E[K_{0,\frac{pt}{n}}(K_{\frac{pt}{n},u}h_{p,n}-h_{p,n})^2(x)]\nonumber\
\\
&\leq&E[K_{0,\frac{pt}{n}}(K_{\frac{pt}{n},u}h_{p,n}^2-2h_{p,n}K_{\frac{pt}{n},u}h_{p,n}+h_{p,n}^2)(x)]\nonumber\
\\
&\leq&||P_{u-\frac{pt}{n}}h_{p,n}^2-2h_{p,n}P_{u-\frac{pt}{n}}h_{p,n}+h_{p,n}^2||_{\infty}\nonumber\
\\
&\leq&2||h_{p,n}||_{\infty}||P_{u-\frac{pt}{n}}h_{p,n}-h_{p,n}||_{\infty}+||P_{u-\frac{pt}{n}}h_{p,n}^2-h_{p,n}^2||_{\infty}.\nonumber\
\end{eqnarray}
Therefore $||A_3(n)||_{L^2(\Omega)}\leq \sqrt{t}(2C_1(n)+C_2(n))^{\frac{1}{2}}$, where
$$C_1(n)=\displaystyle\sum_{p=0}^{n-1}||h_{p,n}||_{\infty}\int_{\frac{pt}{n}}^{\frac{(p+1)t}{n}}||P_{u-\frac{pt}{n}}h_{p,n}-h_{p,n}||_{\infty}du, C_2(n)=\displaystyle\sum_{p=0}^{n-1}\int_{\frac{pt}{n}}^{\frac{(p+1)t}{n}}||P_{u-\frac{pt}{n}}h_{p,n}^2-h_{p,n}^2||_{\infty}du.$$
From $||h_{p,n}||_{\infty}\leq ||Ag||_{\infty}$ and $||P_{u-\frac{pt}{n}}h_{p,n}-h_{p,n}||_{\infty}\leq ||P_{u-\frac{pt}{n}}Ag-Ag||_{\infty}$, we get
$$C_1(n)\leq||Ag||_{\infty}\displaystyle\sum_{p=0}^{n-1}\int_{\frac{pt}{n}}^{\frac{(p+1)t}{n}}||P_{u-\frac{pt}{n}}Ag-Ag||_{\infty}du\leq||Ag||_{\infty}\int_{0}^{t}||P_{\frac{z}{n}}Ag-Ag||_{\infty}dz.$$
As $Ag\in C_0(G)$, $C_1(n)$ tends to $0$ obviously. On the other hand, $h_{p,n}^2\in D(\alpha_1,\cdots,\alpha_N)$ (this can be easily verified since $h_{p,n}$ is continuous and $\displaystyle\sum_{i=0}^{N}\alpha_i{(h_{p,n})}'_i(0+)=0$). We may apply (\ref{Najia}) to get\\
$C_2(n)=\frac{1}{n}\displaystyle\sum_{p=0}^{n-1}\int_{0}^{t}||P_{\frac{z}{n}}h_{p,n}^2-h_{p,n}^2||_{\infty}dz\leq \frac{1}{2n}\displaystyle\sum_{p=0}^{n-1}\int_{0}^{t}\int_{0}^{\frac{z}{n}}||(h_{p,n}^2)''||_{\infty}dudz$.\\
Now we verify that $h_{p,n}', h_{p,n}''$ are uniformly bounded with respect to $n$ and $0\leq p\leq n-1$. In fact $||h_{p,n}''||_{\infty}=||2Ah_{p,n}||_{\infty}\leq 2||AP_{\frac{\epsilon}{2}}f||_{\infty}$. Write $h_{p,n}=P_{t-\frac{p+1}{n}t+\frac{\epsilon}{2}}P_{\frac{\epsilon}{4}}AP_{\frac{\epsilon}{4}}f$ where $P_{\frac{\epsilon}{4}}AP_{\frac{\epsilon}{4}}f\in D'(\alpha_1,\cdots,\alpha_N)$. Then, by (\ref{key}), $||h_{p,n}'||_{\infty}$ is uniformly bounded with respect to $n, p\in[0,n-1]$ and so the same holds for $||(h_{p,n}^2)''||_{\infty}$. As a result $C_2(n)$ tends to $0$ as $n\rightarrow\infty$. Finally
$$K_{0,t}g(x)=P_{t}g(x)+\int_{0}^{t}K_{0,u}(D(P_{t-u}g))(x)W(du).$$
Now, let $\epsilon$ go to $0$, then $K_{0,t}g(x)$ tends to $K_{0,t}f(x)$ in $L^2(\Omega)$. Furthermore
\\
$||\displaystyle\int_{0}^{t}K_{0,u}(D(P_{t-u}g))(x)W(du)-\int_{0}^{t}K_{0,u}(D(P_{t-u}f))(x)W(du)||_{\L^2(\Omega)}^2$\\
$\leq\displaystyle\int_{0}^{t}P_u((P_{t-u}g)'-(P_{t-u}f)')^2(x)du.$\\
Using the derivation formula (\ref{key}), the right side may be decomposed as $I_{\epsilon}+J_{\epsilon}$, where 
$$I_{\epsilon}=\displaystyle\int_{0}^{t}P_u(P_{t-u}g'-P_{t-u}f')^2(x)du,\ \ J_{\epsilon}=\displaystyle\int_{0}^{t}P_u(\lambda_{t-u}g'-\lambda_{t-u}f')^2(x)du.$$
By Jensen inequality, $I_{\epsilon}\leq tP_t(g'-f')^2(x)$. Since $g'(y)=-P_{\epsilon}f'(y)+2\lambda_{\epsilon}f'(y)\longrightarrow f'(y)$ as $\epsilon\rightarrow0, P_t(x,dy)\ a.s.$, we get $I_{\epsilon}\longrightarrow0$ as $\epsilon\rightarrow0$ by dominated convergence.
\noindent Similarly $J_{\epsilon}$ tends to $0$ as $\epsilon\rightarrow0$. This establishes (\ref{valerie}). Now assume that $(K,W)$ is a Wiener solution of $(E)$ and let $f\in \mathcal S$. Since $K_{0,t}f(x)\in L^2(\mathcal F_{\infty}^{W_{0,\cdot}})$ , let $K_{0,t}f(x)=P_tf(x)+\sum_{n=1}^{\infty}J^n_tf(x)$
be the decomposition in Wiener chaos of $K_{0,t}f(x)$ in $L^2$ sense (\cite{MR1725357} page 202). By iterating (\ref{valerie}) (recall that $(P_tf)'\in\mathcal S$ ),  we see that for all $n\geq1$
$$J^n_tf(x)=\int_{0<s_1<\cdots<s_n<t}P_{s_1}(D(P_{s_2-s_1}\cdots D(P_{t-s_n}f)))(x)dW_{0,s_1}\cdots dW_{0,s_n}.$$
 If $K'$ is another Wiener flow satisfying (\ref{valerie}), then $K_{0,t}f(x)$ and $K'_{0,t}f(x)$ must have the same Wiener chaos decomposition for all $f\in\mathcal S$, that is $K_{0,t}f(x)=K'_{0,t}f(x)$ a.s. Consequently $K_{0,t}f(x)=K'_{0,t}f(x)$ a.s. for all $f\in D'(\alpha_1,\cdots,\alpha_N)$ since this last set is included in $\mathcal S$ and the result extends for all $f\in C_0(G)$ by a density argument. This completes the proof when $x\neq0$. The case $x=0$ can be deduced from property $(4)$ in the Definition \ref{ghazel}.
\end{proof}
\noindent\textbf{Consequence:} We already know  that $K^{W}$ given by (\ref{habla})
is a Wiener solution of $(E)$. Since $\sigma(W)\subset\sigma (K)$, we can define $K^{*}$ the stochastic flow obtained by filtering $K$ with respect to $\sigma(W)$ (Lemma 3-2 (ii) in \cite{MR2060298}). Then $\forall s\leq t, x\in G,\ \ K_{s,t}^{*}(x)=E[K_{s,t}(x)|\sigma(W)]\  a.s$. As a result, $(K^{*},W)$ solves also $(E)$ and by the last proposition, we have:
\begin{equation}\label{k}
\forall s\leq t, x\in G,\ \ E[K_{s,t}(x)|\sigma(W)]=K_{s,t}^{W}(x)\ a.s.
\end{equation}
\textbf{From now on, $(K,W)$ is a solution of $(E)$} defined on $(\Omega,\mathcal{A},\mathbb P)$. Let $P_t^{n}=E[K_{0,t}^{\otimes n}]$ be the compatible family of Feller semigroups associated to $K$. We retain the notations introduced in Section 3 for all functions of $W$ ($Y_{s,t}(x), Z_{s,t}(x), g_{s,t}(x)\cdots)$. In the next section, starting from $K$, we construct a flow of mappings $\varphi^c$ which is a solution of $(E)$. This flow will play an important role to characterize the law of $K$. 
\subsection{Construction of a stochastic flow of mappings solution of $(E)$ from $K$.}\label{tss}
Let $x\in G$, $t>0$. By (\ref{k}), on $\{t>\tau_{0,x}\}, K_{0,t}(x)$ is supported on $$\{|Z_{0,t}(x)|\vec{e}_i,\ \ 1\leq i\leq p\}\ \ \textrm{if}\ Z_{0,t}(x)>0$$
and is supported on $$\{|Z_{0,t}(x)|\vec{e}_i,\ \ p+1\leq i\leq N\}\ \ \textrm{if}\ Z_{0,t}(x)\leq0.$$
In \cite{MR2060298} (Section 2.6), the $n$ point motion $X^n$ started at $(x_1,\cdots,x_n)\in G^n$ and associated with $P^n$ has been constructed on an extension $\Omega\times \Omega'$ of $\Omega$ such that the law of $\omega'\longmapsto X^n_t(\omega,\omega')$ is given by $K_{0,t}(x_1,dy_1)\cdots K_{0,t}(x_n,dy_n)$. For each $(x,y)\in G^2$, let $(X^x_t,Y^y_t)_{t\geq0}$ be the two point motion started at $(x,y)$ associated with $P^{2}$ as preceded. Then $|X^x_t|=|Z_{0,t}(x)|, |Y^y_t|=|Z_{0,t}(y)|$ for all $t\geq0$ and so $$T^{x,y}:=\inf\{r\geq0,X^x_r=Y^y_r\}<+\infty\ \ \textrm{a.s}.$$
To $(P^{n})_{n\geq 1}$, we associate a compatible family of Markovian coalescent semigroups $(P^{n,c})_{n\geq 1}$  as described in \cite{MR2060298} (Theorem 4.1): Let $X^n$ be the $n$ point motion started at $(x_1,\cdots,x_n)\in G^n$. We denote the ith coordinate of $X_t^n$ by $X^n_t(i)$. Let $$T_1=\inf\{u\geq0, \exists i<j,\ X_u^n(i)=X_u^n(j)\},\ \ X_t^{n,c}:=X_t^{n}, t\in [0,T_1].$$
Suppose that $X_{T_1}^n(i)=X_{T_1}^n(j)$ with $i<j$. Then define the process 
$$X^{n,1}_t(h)=X^{n}_t(h)\ \textrm{for}\ \ h\neq j, X^{n,1}_t(j)=X^{n,1}_t(i), t\geq T_1.$$
Note that the ith coordinate of $X^{n,1}$ and the jth one are equal. Now set $$T_2=\inf\{u\geq T_1, \exists h<k, h\neq j, k\neq j, \ X_u^{n,1}(h)=X_u^{n,1}(k)\}.$$ For $t\in [T_1,T_2]$, we define $X_t^{n,c}=X_t^{n,1}$ and so on. In this way, we construct a Markov process $X^{n,c}$ such that for all $i, j\in [1,n]$, $X^{n,c}(i)$ and $X^{n,c}(j)$ meet after a finite time and then stick to gether. Let $P_t^{n,c}(x_1,\cdots,x_n,dy)$ be the law of $X_t^{n,c}$. Then we have:
\begin{lemma}\label{kokou}
$(P^{n,c})_{n\geq 1}$ is a compatible family of Feller semigroups associated with a coalescing flow of mappings $\varphi^c$.
\end{lemma}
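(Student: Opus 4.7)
The strategy is to verify the axioms of a compatible family of Feller semigroups for $(P^{n,c})_{n\geq 1}$ and then invoke the general construction of flows of mappings from compatible Feller semigroups due to Le Jan and Raimond (see \cite{MR2060298}, Theorem 4.1) to obtain the associated coalescing flow $\varphi^c$. Compatibility under projections onto sub-collections of coordinates and under permutations is inherited directly from the compatibility of $(P^n)_{n\geq 1}$ (Proposition 2.2 of \cite{MR2060298}): the stopping times $T_k$ defining the cascade of coalescences are defined intrinsically by coincidences among the chosen coordinates, so projecting onto a sub-collection commutes with the coalescing construction, and symmetry in the indices is built in. The semigroup property $P^{n,c}_{s+t} = P^{n,c}_s P^{n,c}_t$ follows from the strong Markov property of $X^n$ applied at each coalescence time $T_k$, together with the semigroup property of $P^n$ on each interval $[T_k,T_{k+1}]$; equivalently, one checks that $X^{n,c}$ is itself a Markov process with respect to its own filtration.

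The main technical point is the Feller property, namely that $P^{n,c}_t$ maps $C_0(G^n)$ to itself and is strongly continuous in $t$. Strong continuity at $0$ is immediate from the right-continuity of $X^{n,c}$ (inherited from $X^n$) via dominated convergence, and the vanishing at infinity is straightforward since each coordinate of $X^{n,c}$ stays close to its starting point in probability over compact time intervals. The delicate step is continuity in the initial condition. The plan is to show that if $x^{(m)} \to x$ in $G^n$, then the joint law of the coalescence tree $(T_1^{(m)},\ldots,T_{n-1}^{(m)}, X^{n,c,(m)}_t)$ converges weakly to that of $(T_1,\ldots,T_{n-1}, X^{n,c}_t)$. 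This rests on the Feller property of the non-coalescent $P^n$ together with continuous dependence in law of the pairwise meeting time $T^{x,y}$ on $(x,y)$; in the setting at hand $T^{x,y}$ is essentially a functional of the Burdzy--Kaspi flow, and the required continuous dependence was established in Lemma \ref{abed}. Iterating this and applying the strong Markov property at each $T_k$ yields continuity of $x \mapsto P^{n,c}_t f(x)$.

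I expect continuity in the starting point to be the main obstacle: two coordinates starting very close need not coalesce immediately, so one must simultaneously control the event that they have merged by time $t$ and its complement, and pass to the limit in both. Once compatibility and the Feller property are established, the Le Jan--Raimond construction produces a stochastic flow of measurable mappings $\varphi^c = (\varphi^c_{s,t})_{s\leq t}$ satisfying $E[\delta_{\varphi^c_{0,t}(x_1)} \otimes \cdots \otimes \delta_{\varphi^c_{0,t}(x_n)}] = P^{n,c}_t((x_1,\ldots,x_n),\cdot)$; the fact that $P^{n,c}$ was built to force coordinates to stick after meeting translates directly into the coalescence property of $\varphi^c$.
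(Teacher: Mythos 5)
Your plan is correct in its essential content and it pivots on the same key ingredient as the paper, namely Lemma \ref{abed}; but it distributes the work quite differently, and in a way that leaves the hardest part only sketched. The paper uses Theorem 4.1 of \cite{MR2060298} at full strength: that theorem already packages the compatibility, the semigroup property and the Feller property of the coalescent family $(P^{n,c})_{n\geq 1}$, together with the existence of the associated coalescing flow, and reduces everything to the single two-point condition
$$\lim_{y \to x}\mathbb P\big(\{T^{x,y}>t\}\cap\{d(X^x_t,Y^y_t)>\varepsilon\}\big)=0,$$
which is then verified in two lines. You instead cite that theorem only for the final "compatible Feller family $\Rightarrow$ flow" step and propose to re-derive by hand the compatibility under projections, the semigroup property via the strong Markov property at each $T_k$, and — most seriously — the Feller continuity of $x\mapsto P^{n,c}_tf(x)$ through weak convergence of the whole coalescence tree. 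That last item is precisely the content of the reduction in Theorem 4.1 of \cite{MR2060298}, and carrying it out for general $n$ is a genuine piece of work that your proposal describes but does not execute; you would essentially be reproving that theorem. On the two-point condition itself you correctly identify the need to control both the non-coalescence event and the separation event simultaneously, but note that Lemma \ref{abed} only gives continuity of the meeting time of the radial Burdzy--Kaspi parts; to conclude you still need the two structural inclusions the paper uses, namely $\{t<T^{x,y}\}\subset\{t<T_{\varepsilon(x)|x|,\varepsilon(y)|y|}\}$ (from $|X^x_u|=|Z_{0,u}(x)|$, which comes from (\ref{k})) and, for $y$ close to $x$ on the same ray, $\{d(X^x_t,Y^y_t)>\varepsilon\}\subset\{\tau_{0,x}\wedge\tau_{0,y}<t\}$ (from Lemma \ref{z}: before hitting $0$ the two coordinates translate rigidly by the same white-noise increment). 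With those two inclusions made explicit and the reduction delegated to Theorem 4.1, your argument collapses to the paper's.
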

\begin{proof}By Theorem $4.1$ \cite{MR2060298}, we only need to check that: $\forall t>0,\varepsilon>0,x\in G,$ $$\lim\limits_{\substack{y \to x}}\mathbb P(\{T^{x,y}>t\}\cap\{d(X^x_t,Y^y_t)>\varepsilon\})=0 \ \ (C).$$
As $|X^x_u|=|Z_{0,u}(x)|, |Y^y_u|=|Z_{0,u}(y)|$ for all $u\geq0$, we have $\{t<T^{x,y}\}\subset \{t<T_{\varepsilon(x)|x|,\varepsilon(y)|y|}\}$. For $y$ close to $x$, $\{d(X^x_t,Y^y_t)>\varepsilon\}\subset \{\inf(\tau_{0,x},\tau_{0,y})<t\}$. Now $(C)$ holds from Lemma \ref{abed}.
\end{proof}
\noindent\textbf {Consequence:} Let $\nu$ (respectively $\nu^{c}$) be the Feller convolution semigroup associated with $(P^{n})_{n\geq 1}$ (respectively $(P^{n,c})_{n\geq 1}$). By the proof of Theorem 4.2 \cite{MR2235172}, there exists a joint realization $(K^1,K^2)$ where $K^1$ and $K^2$ are two stochastic flows of kernels satisfying $K^1\overset{law}{=}\delta_{\varphi^c}$, $K^2\overset{law}{=}K$ and such that:
\begin{enumerate}
\item [(i)] $\hat K_{s,t}(x,y)=K^1_{s,t}(x)\otimes K^2_{s,t}(y)$ is a stochastic flow of kernels on $G^2$,
\item [(ii)] For all $s\leq t, x\in G,\ K^2_{s,t}(x)=E[K^1_{s,t}(x)|K^2]$ a.s.
\end{enumerate}
For $s\leq t$, let
$$\hat {\mathcal F}_{s,t}=\sigma(\hat K_{u,v}, s\leq u\leq v\leq t),\ \ \mathcal F^i_{s,t}=\sigma(K^i_{u,v}, s\leq u\leq v\leq t),\  i=1,2.$$
Then $\hat {\mathcal F}_{s,t}=\mathcal F^1_{s,t}\vee \mathcal F^2_{s,t}$. To simplify notations, we shall assume that $\varphi^c$ is defined on the original space $(\Omega ,\mathcal A,\mathbb P)$ and that (i) and (ii) are satisfied if we replace $(K^1,K^2)$ by $(\delta_{\varphi^c},K)$. Recall that (i) and (ii) are also satisfied  by the pair $(\delta_{\varphi},K^{m^+,m^-})$ constructed in Section \ref{houwa}. Now
\begin{equation}\label{h}
K_{s,t}(x)=E[\delta_{\varphi^{c}_{s,t}(x)}|K]\ \ a.s. \ \textrm{for all}\ \ s\leq t, x\in G,
\end{equation}
and using (\ref{k}), we obtain
\begin{equation}\label{label}
K^{W}_{s,t}(x)=E[\delta_{\varphi^{c}_{s,t}(x)}|\sigma(W)]\ a.s. \ \textrm{for all}\ \ s\leq t, x\in G,
\end{equation} 
 with $K^W$ being the Wiener flow given by (\ref{habla}). 
\begin{prop}
The stochastic flow $\varphi^c$ solves $(E)$.
\end{prop}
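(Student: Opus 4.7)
The plan is to follow the proof of Proposition~\ref{wajj}: first recognize $\varphi^c_{s,\cdot}(x)$ as a Walsh Brownian motion $W(\alpha_1,\ldots,\alpha_N)$ started at $x$, then apply Freidlin--Sheu's formula (Theorem~\ref{palestine}), and finally identify the standard Brownian motion arising in that formula with $\int_s^{\cdot}\varepsilon(\varphi^c_{s,u}(x))\,W(du)$. The last identification is the main obstacle, and will rest on the sign matching $\varepsilon(\varphi^c)=\widetilde{\textrm{sgn}}(Z)$ forced by the $\sigma(W)$-filtering identity~(\ref{label}).

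For the Walsh-Brownian-motion step, write $P^1_t=E[K_{0,t}]$ for the one-point semigroup. Taking expectations in $(E)$ kills the stochastic integral and leaves
$$P^1_t f(x)=f(x)+\frac{1}{2}\int_0^t P^1_u f''(x)\,du,\qquad f\in D(\alpha_1,\ldots,\alpha_N),$$
so Proposition~\ref{vi} identifies $P^1$ as the semigroup of $W(\alpha_1,\ldots,\alpha_N)$. Since $\varphi^c$ is a flow of mappings sharing that one-point semigroup (Lemma~\ref{kokou} and the consequence following it), $(\varphi^c_{0,t}(x))_{t\geq 0}$ is a Walsh Brownian motion started at $x$. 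Freidlin--Sheu then produces a standard Brownian motion $B$ with $B_t=|\varphi^c_{0,t}(x)|-|x|-\tilde L_t(|\varphi^c_{0,\cdot}(x)|)$ such that
$$f(\varphi^c_{0,t}(x))=f(x)+\int_0^t f'(\varphi^c_{0,u}(x))\,dB_u+\frac{1}{2}\int_0^t f''(\varphi^c_{0,u}(x))\,du$$
for every $f\in D(\alpha_1,\ldots,\alpha_N)$, the boundary term $\bigl(\sum_i\alpha_i f_i'(0+)\bigr)\tilde L_t$ vanishing by the hypothesis on $f$.

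The heart of the argument is identifying $B$. For $x=0$: combining (\ref{h}) at $x=0$, (\ref{k}), and the explicit form of $K^W$ yields $|\varphi^c_{0,t}(0)|=|Y_{0,t}(0)|$, so $\tilde L_t(|\varphi^c_{0,\cdot}(0)|)=\tilde L_{0,t}(Y)$. Tanaka's formula for the symmetric local time, applied to the SBM equation~(\ref{10}) satisfied by $Y_{0,\cdot}(0)$, gives
$$|Y_{0,t}(0)|=\int_0^t\widetilde{\textrm{sgn}}(Y_{0,u}(0))\,W(du)+\tilde L_{0,t}(Y),$$
so $B_t=\int_0^t\widetilde{\textrm{sgn}}(Y_{0,u}(0))\,W(du)$. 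The sign matching $\widetilde{\textrm{sgn}}(Y_{0,u}(0))=\varepsilon(\varphi^c_{0,u}(0))$ for almost every $u$ almost surely is the delicate point: (\ref{label}) reads $K^W_{0,u}(0)=E[\delta_{\varphi^c_{0,u}(0)}\mid\sigma(W)]$, and this kernel is supported in $G^+$ on $\{Z_{0,u}(0)>0\}$ and in $G^-$ on $\{Z_{0,u}(0)<0\}$; since these events are $\sigma(W)$-measurable, $\varphi^c_{0,u}(0)$ itself must lie in the corresponding half-graph, whence the sign identity. Injecting this into Freidlin--Sheu establishes $(E)$ for $\varphi^c$ at $x=0$.

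For $x\neq 0$ I split at $\tau_{0,x}$. On $[0,\tau_{0,x}]$, Lemma~\ref{z} together with (\ref{h}) forces $\varphi^c_{0,t}(x)=x+\vec e(x)\varepsilon(x)W_t$; since $f$ is genuinely $C^2$ on the ray carrying this trajectory, a direct application of It\^o's formula produces the desired identity on that interval. On $(\tau_{0,x},+\infty)$ I restart at time $\tau_{0,x}$ from $0$ and invoke the $x=0$ case just proved; additivity of the three integrals in $(E)$ across $\tau_{0,x}$ then closes the proof.
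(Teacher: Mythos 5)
Your proposal is correct and follows essentially the same route as the paper: identify $\varphi^c_{0,\cdot}(x)$ as a Walsh Brownian motion, apply Freidlin--Sheu, and identify the driving Brownian motion via Tanaka's formula for the $SBM(\alpha^+)$ flow together with the sign matching $\varepsilon(\varphi^c_{0,u}(x))=\widetilde{\textrm{sgn}}(Z_{0,u}(x))$ forced by the filtering identity~(\ref{label}). The only (harmless) divergence is your case-splitting at $\tau_{0,x}$ for $x\neq 0$; the paper avoids this by applying Tanaka's formula directly to $Z_{0,\cdot}(x)=Y_{0,\cdot}(\varepsilon(x)|x|)$, which solves~(\ref{10}) from any starting point, so all $x$ are handled uniformly.
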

\begin{proof}
Fix $t>0, x\in G$. By (\ref{label}), $\delta_{\varphi^c_{0,t}(x)}\ \textrm{is supported on}\ \ \{|Z_{0,t}(x)|\vec{e}_j, 1\leq j\leq N\}\  \ \textrm{a.s.}$ and so $|\varphi^c_{0,t}(x)|=|Z_{0,t}(x)|$. Similarly, using (\ref{label}), we have
\begin{equation}\label{labe}
\varphi^c_{0,t}(x)\in G^+\Leftrightarrow Z_{0,t}(x)\geq0\ \textrm{and}\ \ \varphi^c_{0,t}(x)\in G^-\Leftrightarrow Z_{0,t}(x)\leq0.
\end{equation}
Consequently $\varepsilon(\varphi^{c}_{0,t}(x))=\widetilde{\textrm{sgn}}{(Z_{0,t}(x))}$ a.s. Since $\varphi^{c}_{0,\cdot}(x)$ is an $W(\alpha_1,\cdots,\alpha_N)$ started at $x$, it satisfies Theorem \ref{palestine}; $\forall f\in D(\alpha_1,\cdots,\alpha_N)$,
$$f(\varphi^{c}_{0,t}(x))=f(x)+\int_{0}^{t}f'(\varphi^{c}_{0,u}(x))dB_u+\frac{1}{2}\int_{0}^{t}f''(\varphi^{c}_{0,u}(x))du\ \ a.s.$$
with $B_t=|\varphi_{0,t}(x)|-\tilde L_t(|\varphi_{0,\cdot}(x)|)-|x|=|Z_{0,t}(x)|-\tilde L_t(|Z_{0,\cdot}(x)|)-|x|$. Tanaka's formula and (\ref{labe}) yield $$B_t=\int_{0}^{t}\widetilde{\textrm{sgn}}{(Z_{0,u}(x))}dZ_{0,u}(x)=\int_{0}^{t}\widetilde{\textrm{sgn}}{(Z_{0,u}(x))}W(du)=\int_{0}^{t}\varepsilon(\varphi_{0,u}^{c}(x))W(du).$$
Likewise for all $s\leq t, x\in G, f\in D(\alpha_1,\cdots,\alpha_N)$,
$$f(\varphi^{c}_{s,t}(x))=f(x)+\int_{s}^{t}f'(\varphi^{c}_{s,u}(x))\varepsilon(\varphi^{c}_{s,u}(x))W(du)+\frac{1}{2}\int_{s}^{t}f''(\varphi^{c}_{s,u}(x))du\ \ a.s. $$
\end{proof}
We will see later (Remark \ref{wfa}) that $\varphi^c\overset{law}{=}\varphi$ where $\varphi$ is the stochastic flow of mappings constructed in Section 3.
\subsection{Two probability measures associated to $K$.}\label{cherif}
 For all $t\geq \tau_{s,x}$, set $$V^{+,i}_{s,t}(x)=K_{s,t}(x)({D}_i\setminus\{0\})\ \forall 1\leq i\leq p$$
and
$$ V^{-,N}_{s,t}(x)=K_{s,t}(x)(D_N),\ \ V^{-,i}_{s,t}(x)=K_{s,t}(x)({D}_i\setminus\{0\})\ \ \forall p+1\leq i\leq N-1$$
 $$V^{+}_{s,t}(x)=(V^{+,i}_{s,t}(x))_{1\leq i\leq p}, V^{-}_{s,t}(x)=(V^{-,i}_{s,t}(x))_{p+1\leq i\leq N},\ V_{s,t}(x)=(V^{+}_{s,t}(x),V^{-}_{s,t}(x)).$$
For $s=0$, we use these abbreviated notations
$$Z_t(x)=Z_{0,t}(x),\ \ V^+_t(x)=V^{+}_{0,t}(x),\ \ V^-_t(x)=V^{-}_{0,t}(x),\ V_t(x)=(V^+_t(x),V^-_t(x))$$
and if $x=0$, $$Z_t=Z_{0,t}(0),\ \ V^+_t=V^{+}_{0,t}(0),\ \ V^-_t=V^{-}_{0,t}(0),\ V_t=(V^+_t,V^-_t).$$
\noindent By (\ref{k}), $\forall x\in G,s\leq t$, with probability $1$
\begin{eqnarray}
K_{s,t}(x)&=&\delta_{x+\vec{e}(x)\varepsilon(x)W_{s,t}} 1_{\{t\leq \tau_{s,x}\}}\nonumber\\
&+&(\sum_{i=1}^{p}V^{+,i}_{s,t}(x)\delta_{\vec{e}_i|Z_{s,t}(x)|} 1_{\{Z_{s,t}(x)>0\}}+\sum_{i=p+1}^{N}V^{-,i}_{s,t}(x)\delta_{\vec{e}_i|Z_{s,t}(x)|} 1_{\{Z_{s,t}(x)\leq0\}}) 1_{\{t> \tau_{s,x}\}}.\nonumber\
\end{eqnarray}
Define $$\mathcal F^{K}_{s,t}=\sigma(K_{v,u}, s\leq v\leq u\leq t),\ \ \mathcal F^{W}_{s,t}=\sigma(W_{v,u}, s\leq v\leq u\leq t)$$
 and assume that all these $\sigma$-fields are right-continuous and include all $\mathbb P$-negligible sets. When $s=0$, we denote $\mathcal F^{K}_{0,t}, \mathcal F^{W}_{0,t}$ simply by $\mathcal F^{K}_{t}, \mathcal F^{W}_{t}$. Recall that for all $s\in{\R}, x\in G$, the mapping $t\longmapsto K_{s,t}(x)$ defined from $[s,+\infty[$ into $\mathcal P(G)$ is continuous. Then the following Markov property holds.
\begin{lemma}\label{r}
Let $x, y\in G$ and $T$ be an $(\mathcal F^{K}_{t})_{t\geq 0}$ stopping time such that $K_{0,T}(x)=\delta_y$ a.s. Then $K_{0,\cdot+T}(x)$ is independent of $\mathcal F^{K}_{T}$ and has the same law as $K_{0,\cdot}(y)$.
\end{lemma}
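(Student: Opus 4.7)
The statement is a strong Markov-type property for the flow of kernels, and I would prove it by the classical two-step scheme: first for discrete stopping times, then by approximation from above.

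First I would treat the case where $T$ takes values in a countable set $\{t_k\}_{k \geq 1}$. On $\{T = t_k\}$, the flow property (property 1 of Definition \ref{ghazel}) gives, for each $f \in C_0(G)$ and $s \geq 0$,
$$K_{0, t_k + s}f(x) = \int K_{t_k, t_k + s}f(z)\, K_{0, t_k}(x, dz) \quad \text{a.s.},$$
which, since $K_{0, T}(x) = \delta_y$ on this event, reduces to $K_{t_k, t_k + s}f(y)$. Partitioning on the values of $T$, together with the independence of increments (property 3) and stationarity (property 2), yields
$$E\bigl[F\bigl((K_{0, T + s}(x))_{s \geq 0}\bigr) \mathbf{1}_A\bigr] = E\bigl[F\bigl((K_{0, s}(y))_{s \geq 0}\bigr)\bigr]\, \mathbb{P}(A)$$
for every bounded measurable functional $F$ of a kernel-valued path and every $A \in \mathcal{F}^K_T$, which is the claim in the discrete case.

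For a general stopping time $T$, I would approximate from above by the dyadic stopping times $T_n = 2^{-n}\lceil 2^n T \rceil$, each of which takes countably many values and satisfies $T_n \downarrow T$. Applying the flow property at $T_n$, the finite-dimensional law of $(K_{0, T_n + s_i}(x))_{1 \leq i \leq m}$, conditional on $\mathcal{F}^K_{T_n} \supset \mathcal{F}^K_T$, is a continuous functional $\Phi$ of the single measure $K_{0, T_n}(x)$, obtained by integrating the independent increment family $(K_{T_n, T_n + s_i})_{1 \leq i \leq m}$ (whose joint law is that of $(K_{0, s_i})_{1 \leq i \leq m}$ by the discrete case already handled). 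The continuity in time of $t \mapsto K_{0, t}(x)$ in $\mathcal{P}(G)$, together with $K_{0, T}(x) = \delta_y$, then gives $K_{0, T_n}(x) \to \delta_y$ and $K_{0, T_n + s_i}(x) \to K_{0, T + s_i}(x)$ almost surely; dominated convergence applied to the integration against $\mathbf{1}_A$ delivers the desired identity.

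The main obstacle is the limit step: one must check that the functional $\mu \mapsto \Phi(\mu)$ obtained by integrating a bounded continuous test function against the compatible Feller semigroups $P^n = E[K_{0,\cdot}^{\otimes n}]$ is continuous at $\delta_y$ in $\mathcal{P}(G)$. This follows from the Feller property of the $P^n$ (already noted after Definition \ref{york}) combined with bounded convergence, but it is the one point that requires care. Once established, both the independence from $\mathcal{F}^K_T$ and the identification of the law of $K_{0, T + \cdot}(x)$ with that of $K_{0, \cdot}(y)$ drop out together.
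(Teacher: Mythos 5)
The paper gives no proof of Lemma \ref{r}: it is stated as an immediate consequence of the flow axioms, after recalling only that $t\longmapsto K_{0,t}(x)$ is continuous into $\mathcal P(G)$. Your two-step argument (discrete stopping times via the flow property, independence of increments and stationarity; then dyadic approximation $T_n\downarrow T$ using that continuity) is the standard way to fill this in, and it is correct. The one delicate point is exactly the one you flag, and it deserves one more sentence of justification: the continuity at $\delta_y$ of $\Phi(\mu)=E\bigl[\prod_i f_i(\mu K_{0,s_i})\bigr]$ cannot come from any pathwise continuity of $z\longmapsto K_{0,s}(z)$ (the kernel is only measurable in its starting point); it comes from property (4) of Definition \ref{ghazel}, which gives $E[(K_{0,s}f(z)-K_{0,s}f(y))^2]\to 0$ as $z\to y$. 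Combined with $\mu_n(B(y,\epsilon)^c)\to 0$ for $\mu_n\to\delta_y$ and Jensen's inequality, this yields $\mu_n K_{0,s_i}\to K_{0,s_i}(y)$ in probability in $\mathcal P(G)$, hence $\Phi(\mu_n)\to\Phi(\delta_y)$ by bounded convergence; equivalently one can invoke, as you suggest, the Feller property of the $n$-point semigroups $P^n=E[K_{0,\cdot}^{\otimes n}]$ after reducing to polynomial test functions $f_i(\nu)=\prod_j\nu g_{ij}$. With that observation made explicit, both sides of your limiting identity match and the lemma follows.
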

\noindent As a consequence of the preceding lemma, for each $x\in G$, $K_{0,\cdot+\tau_{0,x}}(x)$ is independent of  $\mathcal{F}^{K}_{\tau_{0,x}}$ and is equal in law to $K_{0,\cdot}(0)$.\\
 Consider the following random times:
$$T=\inf\{r\geq 0 : Z_{r}=1\},\ \ L=\sup\{r\in[0,T] : Z_{r}=0\}$$
and the following $\sigma$-fields:
$$\mathcal{F}_{L-}=\sigma(X_L, X\ \textrm{is bounded}\ \  (\mathcal{F}^{W}_{t})_{t\geq 0}-\textrm{previsible process}),$$
$$\mathcal{F}_{L+}=\sigma(X_L, X\ \textrm{is bounded}\ \  (\mathcal{F}^{W}_{t})_{t\geq 0}-\textrm{progressive process}).$$
Then $\mathcal{F}_{L+}=\mathcal{F}_{L-}$ (Lemma 4.11 in \cite{MR2235172}). Let $f : \R^N\longrightarrow\R$ be a bounded continuous function and set $X_t=E[f(V_t)|\sigma(W)]$. Thanks to (\ref{h}), the process $r\longmapsto V_{r}$ is constant on the excursions of $r\longmapsto Z_{r}$. By following the same steps as in Section 4.2 \cite{MR2235172}, we show that there is an $\mathcal F^W$-progressive version of $X$ that is constant on the excursions of $Z$ out of $0$ (Lemma 4.12 \cite{MR2235172}). We take for $X$ this version. Then $X_{T}$ is $\mathcal{F}_{L+}$ measurable and $E[X_T|\mathcal{F}_{L-}]=E[f(V_T)]$ (Lemma 4.13 \cite{MR2235172}). This implies that $V_T$ is independent of $\sigma(W)$ (Lemma 4.14 \cite{MR2235172}) and the same holds if we replace $T$ by $\inf\{t\geq 0 : Z_{t}=a\}$ where $a>0$.\\ Define by induction $T_{0,n}^+=0$ and for $k\geq 1$:
$$S_{k,n}^{+}=\inf\{t\geq T_{k-1,n}^{+} : Z_{t}=2^{-n}\},\ \ T_{k,n}^{+}=\inf\{t\geq S_{k,n}^{+} : Z_{t}=0\}.$$
Set $V_{k,n}^{+}=V_{S_{k,n}^{+}}^{+}$. Then, we have the following 
\begin{lemma}
For all $n,(V_{k,n}^{+})_{k\geq 1}$ is a sequence of i.i.d. random variables. Moreover, this sequence is independent of $W$.
\end{lemma}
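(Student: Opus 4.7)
My plan is to combine Lemma~\ref{r} (the flow strong Markov property at $T_{k,n}^{+}$) with the single-time independence statement $V_{T_{a}}^{+}\perp\sigma(W)$ for $T_{a}:=\inf\{t\geq 0:Z_{t}=a\}$ that was established in the paragraphs preceding the lemma.

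First I would set $T:=T_{k,n}^{+}$ and note that $Z_{T}=0$ forces $K_{0,T}(0)=\delta_{0}$ via the explicit expression of $K$ recalled in Section~\ref{cherif}. Lemma~\ref{r} then tells us that the shifted flow $\tilde{K}_{s,t}(x):=K_{T+s,T+t}(x)$ is independent of $\mathcal{F}^{K}_{T}$ and equal in law to $K$; applying Corollary~\ref{mawloud} to $\tilde{K}$ yields a white noise $\tilde{W}$ such that $(\tilde{K},\tilde{W})\overset{\mathrm{law}}{=}(K,W)$, and the strong Markov property of the white noise at $T$ gives the decomposition $\sigma(W)=\mathcal{F}^{W}_{T}\vee\sigma(\tilde{W})$ with $\mathcal{F}^{W}_{T}\subset\mathcal{F}^{K}_{T}$. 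By the flow property, $S_{k+1,n}^{+}-T$ is the first hitting time of $2^{-n}$ by $\tilde{Z}$, so $V_{k+1,n}^{+}$ is precisely the analogue of $V_{1,n}^{+}$ built from $\tilde{K}$.

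For the i.i.d.\ part, I would iterate this strong Markov reduction: the tail $(V_{\ell,n}^{+})_{\ell>k}$ is a function of $\tilde{K}$ alone, hence independent of $\mathcal{F}^{K}_{T}\ni V_{1,n}^{+},\dots,V_{k,n}^{+}$, with the same joint law as $(V_{\ell,n}^{+})_{\ell\geq 1}$. Induction on $k$ then yields that $(V_{k,n}^{+})_{k\geq 1}$ are i.i.d. To prove independence from $W$, I would argue inductively on $m$ that $(V_{1,n}^{+},\dots,V_{m,n}^{+})\perp\sigma(W)$. The case $m=1$ is the preceding remark applied with $a=2^{-n}$. For the step with $T=T_{m,n}^{+}$, applying the base case to $(\tilde{K},\tilde{W})\overset{\mathrm{law}}{=}(K,W)$ gives $V_{m+1,n}^{+}\perp\sigma(\tilde{W})$, while Lemma~\ref{r} gives $(V_{m+1,n}^{+},\tilde{W})\perp\mathcal{F}^{K}_{T}$; factoring expectations first under $\mathcal{F}^{K}_{T}$ and then under $\sigma(\tilde{W})$ yields $V_{m+1,n}^{+}\perp\mathcal{F}^{K}_{T}\vee\sigma(\tilde{W})$, which contains $\sigma(V_{1,n}^{+},\dots,V_{m,n}^{+},W)$. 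Combining with the induction hypothesis gives the joint independence $(V_{1,n}^{+},\dots,V_{m+1,n}^{+})\perp\sigma(W)$.

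The main obstacle will be identifying the shifted flow $\tilde{K}$ as itself a solution of $(E)$ with some driving white noise $\tilde{W}$ satisfying $(\tilde{K},\tilde{W})\overset{\mathrm{law}}{=}(K,W)$ and $\sigma(W)=\mathcal{F}^{W}_{T}\vee\sigma(\tilde{W})$; once these measurability points are settled via Corollary~\ref{mawloud} and the strong Markov property of the white noise at $T$, the remainder is a bookkeeping exercise combining Lemma~\ref{r} and the base case.
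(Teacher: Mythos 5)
Your proposal is correct and follows essentially the same route as the paper: the i.i.d.\ claim via Lemma~\ref{r} applied at the return times $T_{k,n}^{+}$ (where $K_{0,T_{k,n}^{+}}(0)=\delta_0$), and independence from $W$ by induction, splitting $\sigma(W)$ into its pre-$T_{m,n}^{+}$ part (contained in $\mathcal F^K_{T_{m,n}^{+}}$) and post-$T_{m,n}^{+}$ part, and invoking the previously established single-time independence for the shifted flow. The only cosmetic difference is that you package the post-$T$ information as $\sigma(\tilde K)\vee\sigma(\tilde W)$ where the paper works directly with $\sigma(K_{0,T+t}(0),\,t\geq0)$, which is what Lemma~\ref{r} literally controls (and suffices, since $\tilde W$ and the later $V_{\ell,n}^{+}$ are measurable with respect to it).
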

\begin{proof}
For all $k\geq2$, $V_{k,n}^{+}$ is $\sigma(K_{0,T_{k-1,n}^{+}+t}(0), t\geq 0)$ measurable
and $V_{k-1,n}^{+}$ is $\mathcal F^{K}_{T_{k-1,n}^{+}}$ measurable which proves the first claim by Lemma \ref{r}. Now, we show by induction on $q$ that $(V_{1,n}^{+},\cdots,V_{q,n}^{+})$ is independent of $\sigma(W)$. For $q=1$, this has been justified. Suppose $(V_{1,n}^{+},\cdots,V_{q-1,n}^{+})$ is independent of $\sigma(W)$ and write $$\sigma(W_{0,u}, u\geq0)=\sigma(Z_{u\wedge T_{q-1,n}^{+}}, u\geq0)\vee \sigma(Z_{u+ T_{q-1,n}^{+}}, u>0).$$
Since $(V_{1,n}^{+},\cdots,V_{q-1,n}^{+})$ is $\mathcal F^{K}_{T_{q-1,n}^{+}}$ measurable and $$\sigma(Z_{u+ T_{q-1,n}^{+}}, u>0)\vee\sigma(V_{q,n}^{+})\subset \sigma(K_{0,T_{q-1,n}^++t}(0), t\geq 0),$$ we conclude that $(V_{1,n}^{+},\cdots,V_{q,n}^{+})$  and $\sigma(W)$ are independent.
\end{proof}
Let $m_n^{+}$ be the common law of $(V_{k,n}^{+})_{k\geq 1}$ for each $n\geq1$ and define $m^+$ as the law of $V_1^+$ under $\mathbb P(.|Z_1>0)$. Then, we have the
\begin{lemma}\label{qods}
The sequence $(m_n^{+})_{n\geq1}$ converges weakly towards $m^+$. For all $t>0$, under $\mathbb P(\cdot|Z_t>0)$, $V^+_t$ and $W$ are independent and the law of $V^+_t$ is given by $m^+$.
\end{lemma}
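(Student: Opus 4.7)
The plan is to deduce both assertions from a single approximation identity. For each fixed $t > 0$, we will exhibit a $\sigma(W)$-measurable random index $K_n = K_n(t)$ such that, on $\{Z_t > 0\}$ and for $n$ large enough, $V^+_{K_n,n} = V^+_t$. Setting $g_t = \sup\{s \in [0,t] : Z_s = 0\}$, $d_t = \inf\{s \geq t : Z_s = 0\}$ and $M_t = \max_{g_t \leq s \leq t} Z_s$, take $K_n$ to be the unique index with $S^+_{K_n,n} \in (g_t, t]$; this is well defined as soon as $2^{-n} \leq M_t$, and hence eventually almost surely on $\{Z_t > 0\}$. Because $V^+$ is constant on excursions of $Z$ out of $0$ (as already used to define $m^+$), and because $S^+_{K_n,n}$ and $t$ both lie in the positive excursion $(g_t, d_t)$, this forces $V^+_{K_n,n} = V^+_{S^+_{K_n,n}} = V^+_t$.

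Now $K_n$ is a measurable functional of $Z$, hence $\sigma(W)$-measurable, while $(V^+_{k,n})_{k \geq 1}$ is i.i.d.\ of law $m^+_n$ and independent of $W$ by the preceding lemma. Decomposing over the disjoint $\sigma(W)$-measurable events $\{K_n = k\}$ yields $E[f(V^+_{K_n,n}) \mid W] = \int f\, dm^+_n$ on the set where $K_n$ is defined, for every bounded measurable $f : \Delta_p \to \R$. Multiplying by an arbitrary bounded $\sigma(W)$-measurable $H$ and taking expectations produces the master identity
\begin{equation}\label{eqqods}
E\bigl[f(V^+_{K_n,n})\, H\, 1_{Z_t > 0}\bigr] = \Bigl(\int f \, dm^+_n\Bigr) E\bigl[H\, 1_{Z_t > 0}\bigr] + o(1),
\end{equation}
the $o(1)$ absorbing the vanishing event that $K_n$ is still undefined.

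To obtain the weak convergence, apply \eqref{eqqods} with $t = 1$, $H = 1$ and $f$ bounded continuous: by dominated convergence and $V^+_{K_n,n} \to V^+_1$ on $\{Z_1 > 0\}$, the left-hand side tends to $E[f(V^+_1)\, 1_{Z_1 > 0}] = \mathbb{P}(Z_1 > 0) \int f \, dm^+$, and since $\mathbb{P}(Z_1 > 0) > 0$ this forces $\int f\, dm^+_n \to \int f\, dm^+$, i.e.\ $m^+_n \to m^+$ weakly. Feeding this back into \eqref{eqqods} for general $t$ and $H$ yields $E[f(V^+_t)\, H\, 1_{Z_t > 0}] = \int f\, dm^+ \cdot E[H\, 1_{Z_t > 0}]$, which after dividing by $\mathbb{P}(Z_t > 0)$ is precisely the independence of $V^+_t$ and $\sigma(W)$ under $\mathbb{P}(\cdot \mid Z_t > 0)$ together with $V^+_t \sim m^+$. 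The delicate step is the identification $V^+_{K_n,n} = V^+_t$ of the first paragraph, whose validity rests entirely on the constancy of $V^+$ across excursions of $Z$; everything else is a routine passage to the limit.
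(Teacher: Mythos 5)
Your proof is correct and follows essentially the same route as the paper: your random index $K_n$ with $S^+_{K_n,n}\in(g_t,t]$ is just a repackaging of the paper's sum $\sum_k 1_{\{t\in[S^+_{k,n},T^+_{k,n}[\}}f(V^+_{k,n})$, and both arguments rest on the same three ingredients — constancy of $V^+$ on excursions of $Z$, the preceding lemma giving that $(V^+_{k,n})_k$ is i.i.d.\ with law $m^+_n$ and independent of $W$, and the a.s.\ vanishing of the event where the relevant excursion has not yet reached level $2^{-n}$. The only cosmetic difference is that the paper works directly with $E[f(V^+_t)\mid W]1_{\{Z_t>0\}}$ and reads off that the limit $\lim_n\int f\,dm^+_n$ is independent of $t$, whereas you first fix $t=1$ to identify the limit as $\int f\,dm^+$ and then feed it back in; the content is identical.
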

\noindent\begin{proof}
For each bounded continuous function $f : \R^p\longrightarrow\R,$
$$\begin{array}{ll}
E[f(V^+_{t})|W] 1_{\{Z_{t}>0\}}&=\displaystyle{\lim_{n\rightarrow \,\infty}}\displaystyle{\sum_{k}}
E\big[1_{\{t\in[S_{k,n}^{+},T_{k,n}^{+}[\}}f(V_{k,n}^{+})|W\big]\\
&=\displaystyle{\lim_{n\rightarrow \,\infty}}\displaystyle{\sum_{k}} 1_{\{t\in[S_{k,n}^{+},T_{k,n}^{+}[\}}\left(\int f dm_{n}^+\right)\\
&=[1_{\{Z_t>0\}}\displaystyle{\lim_{n\rightarrow \,\infty}}\int f dm_{n}^++\varepsilon_n(t)]\\
\end{array}$$
with $\displaystyle{\lim_{n\rightarrow \,\infty}}\varepsilon_n(t)=0$ a.s. Consequently $$\displaystyle{\lim_{n\rightarrow \,\infty}}\int f dm_{n}^+=\frac{1}{\mathbb P(Z_t>0)}E[f(V_t^+) 1_{\{Z_t>0\}}].$$
The left-hand side does not depend on $t$, which completes the proof.
\end{proof}
\noindent We define analogously the measure $m^{-}$ by considering the following stopping times: $T_{0,n}^-=0$ and for $k\geq 1$:
$$S_{k,n}^{-}=\inf\{t\geq T_{k-1,n}^{-} : Z_{t}=-2^{-n}\},\ \ T_{k,n}^{-}=\inf\{t\geq S_{k,n}^{-} : Z_{t}=0\}.$$
Set $V_{k,n}^{-}=V_{S_{k,n}^{-}}^{-}$ and let  $m_n^{-}$ be the common law of $(V_{k,n}^{-})_{k\geq 1}$. Denote by $m^-$ the law of $V_1^-$ under $\mathbb P(.|Z_1<0)$. Then, the sequence $(m_n^{-})_{n\geq1}$ converges weakly towards $m^-$. Moreover, for all $t>0$, the law of $V^-_t$ under $\mathbb P(.|Z_t<0)$ is given by $m^-$. As a result, we have
$$E[f(V^-_{t})|W] 1_{\{Z_{t}<0\}}=1_{\{Z_{t}<0\}}\int f dm^{-}$$
for each measurable bounded $f : \R^{N-p}\longrightarrow\R$. If we follow the same steps as before but consider $(Z_{u+\tau_{0,x}}(x), u\geq0)$ for all $x$, we show that the law of $V_{0,t}^+(x)$ under $\mathbb P(.|Z_{0,t}(x)>0, t>\tau_{0,x})$ does not depend on $t>0$. Denote by $m_x^+$ such a law. Then, thanks to Lemma \ref{r}, $m_x^+$ does not depend on $x\in G$. Thus $m_x^+=m^+$ for all $x$ and 
\begin{equation}\label{ramond}
E[f(V^+_{t}(x))|W] 1_{\{Z_{t}(x)>0, t>\tau_{0,x}\}}=1_{\{Z_{t}(x)>0, t>\tau_{0,x} \}}\int f dm^{+}
\end{equation}
for each measurable bounded $f : \R^p\longrightarrow\R$. Similarly
\begin{equation}\label{romond} 
E[h(V^-_{t}(x))|W] 1_{\{Z_{t}(x)<0, t>\tau_{0,x}\}}=1_{\{Z_{t}(x)<0, t>\tau_{0,x} \}}\int h  dm^{-}
\end{equation}
for each measurable bounded $h : \R^{N-p}\longrightarrow\R$.
\subsection{Unicity in law of $K$.}
Define $$p(x)=|x|\vec{e}_1 1_{\{x\in G^+\}}+|x|\vec{e}_{p+1}1_{\{x\in G^-, x\neq0\}},\ \ x\in G.$$
Fix $x\in G$, $0<s<t$ and let $x_s=p(\varphi^c_{0,s}(x))$. Then:
\begin{enumerate}
 \item [(i)] $\varphi^c_{s,r}(x)=x+\vec{e}(x)\varepsilon(x)W_{s,r}$ for all $r\leq \tau_{s,x}$ (from Lemma \ref{z}).
 \item [(ii)] $\tau_{s,x}=\tau_{s,p(x)}$ and $\varphi^c_{s,r}(x)=\varphi^c_{s,r}(p(x))$ for all $r\geq \tau_{s,x}$ since $\varphi^c$ is a coalescing flow.
 \item [(iii)] $\tau_{s,\varphi^c_{0,s}(x)}=\tau_{s,x_s}$ and $\varphi^c_{s,r}(\varphi^c_{0,s}(x))=\varphi^c_{s,r}(x_s)$ for all $r\geq \tau_{s,x_s}$ by (ii) and the independence of increments of $\varphi^c$.
 \item [(iv)] On $\{t>\tau_{s,x_s}\}$, $\varphi^c_{0,t}(x)=\varphi^c_{s,t}(\varphi^c_{0,s}(x))=\varphi^c_{s,t}(x_s)$ by the flow property of $\varphi^c$ and (iii).
 \item [(v)] Clearly $\tau_{s,x_s}=\inf\{r\geq s,\ Z_{0,r}(x)=0\}$ a.s. Since $\{\tau_{0,x}<s<g_{0,t}(x)\}\subset\{t>\tau_{s,x_s}\}$ a.s., we deduce that
$$\mathbb P(\varphi^c_{0,t}(x)=\varphi^c_{s,t}(x_s)|\tau_{0,x}<s<g_{0,t}(x))=1.$$ 
\item [(vi)] Recall that $\hat {\mathcal F}_{0,s}$ and $\hat {\mathcal F}_{s,t}$ are independent ($\hat K$ is a flow) and $\hat {\mathcal F}_{0,t}=\hat {\mathcal F}_{0,s}\vee \hat {\mathcal F}_{s,t}$. By (\ref{h}), we have $K_{s,t}(x_s)=E[\delta_{\varphi^c_{s,t}(x_s)}|\mathcal F^K_{0,t}]$ and as a result of (v),
\begin{equation}\label{enfin}
\mathbb P(K_{s,t}(x_s)=K_{0,t}(x)|\tau_{0,x}<s<g_{0,t}(x))=1. 
\end{equation}
\end{enumerate}
\begin{lemma}\label{nouv}
Let $\mathbb P_{t,x_1,\cdots,x_n}$ be the law of $(K_{0,t}(x_1),\cdots,K_{0,t}(x_{n}),W)$ where $t\geq0$ and $x_1,\cdots,x_n\in G$. Then, 
$\mathbb P_{t,x_1,\cdots,x_n}$ is uniquely determined by $\{\mathbb P_{u,x}, u\geq0, x\in G\}$.
\end{lemma}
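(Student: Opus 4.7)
The strategy is to condition on $\sigma(W)$ and to show that the conditional joint law of $(K_{0,t}(x_1),\dots,K_{0,t}(x_n))$ is entirely described by $W$ together with the two probability measures $m^+,m^-$ introduced in Section \ref{cherif}. Since Lemma \ref{qods} (and its mirror) identifies $m^+$ (resp.\ $m^-$) with the law of $V_1^+$ (resp.\ $V_1^-$) under $\mathbb P(\cdot\,|\,Z_1>0)$ (resp.\ $Z_1<0$), these measures are themselves functions of the one-point marginal $\mathbb P_{1,0}$. Hence such a description establishes the lemma.

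First, by Lemma \ref{z} and (\ref{k}), the shape of each $K_{0,t}(x_i)$ is a $W$-measurable datum: if $t\le\tau_{0,x_i}$ then $K_{0,t}(x_i)=\delta_{x_i+\vec e(x_i)\varepsilon(x_i)W_{0,t}}$, and otherwise $K_{0,t}(x_i)$ is supported on $\{\vec e_j|Z_{0,t}(x_i)|\}$ with $j$ ranging over $\{1,\dots,p\}$ or $\{p+1,\dots,N\}$ according to the sign of $Z_{0,t}(x_i)$. The only randomness beyond $W$ lies in the weight vectors $V_{0,t}^{\pm}(x_i)$ for $i$ in $I:=\{i: t>\tau_{0,x_i}\}$. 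I would then define an equivalence relation on $I$ by $i\sim j$ iff $Z_{0,r}(x_i)=Z_{0,r}(x_j)$ for all $r\ge g_{0,t}(x_i)\vee g_{0,t}(x_j)$, i.e.\ $x_i$ and $x_j$ share the current excursion at time $t$. Choosing a rational $s$ with $\tau_{0,x_i}\vee\tau_{0,x_j}<s<g_{0,t}(x_i)\wedge g_{0,t}(x_j)$ such that $Z_{0,s}(x_i)=Z_{0,s}(x_j)$ gives $x_{s,i}=x_{s,j}$ (in the notation of (\ref{enfin})), and two applications of (\ref{enfin}) yield $K_{0,t}(x_i)=K_{s,t}(x_{s,i})=K_{s,t}(x_{s,j})=K_{0,t}(x_j)$ a.s., so that $V_{0,t}(x_i)=V_{0,t}(x_j)$ whenever $i\sim j$. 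Consequently the family of weight vectors is indexed by the ($W$-measurable) equivalence classes of $\sim$.

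For the conditional law of the weights across distinct classes, the plan is to apply the strong Markov property of Lemma \ref{r} iteratively along the sequence of stopping times $T_{k,n}^{\pm},S_{k,n}^{\pm}$ already used in Section \ref{cherif}: at the beginning of each fresh excursion, the restarted kernel has the law of $K_{0,\cdot}(0)$ and is independent of the past $\sigma$-field generated by $K$. Combined with the decomposition $\sigma(W)=\sigma(W^{\le s})\vee\sigma(W^{>s})$ at these stopping times, this shows that the weight vectors $V_{0,t}(x_i)_{i\in C}$ attached to distinct equivalence classes $C$ are conditionally independent given $W$. The conditional law of any single $V_{0,t}(x_i)$ given $W$ is then read off (\ref{ramond})--(\ref{romond}) as $m^+$ (on $\{Z_{0,t}(x_i)>0\}$) or $m^-$ (on $\{Z_{0,t}(x_i)<0\}$), and these measures depend on $K$ only through $\mathbb P_{1,0}$.

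The main obstacle is precisely this conditional independence across equivalence classes. Within a class it is almost immediate from (\ref{enfin}), but separating the classes requires selecting, for each class, a stopping time at which $Z$ starts a new excursion corresponding to that class, and then applying Lemma \ref{r} jointly. The flow property (independence of increments of $K$) and the fact that excursion times of $Z$ are $\mathcal F^W_t$-stopping times must be combined carefully to handle all classes simultaneously, in the spirit of Section 4.5 of \cite{MR2235172}; the rest of the argument is essentially bookkeeping.
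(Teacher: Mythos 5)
Your overall picture agrees with the paper's: the supports of the $K_{0,t}(x_i)$ are $\sigma(W)$-measurable, only the weight vectors $V_{0,t}(x_i)$ carry randomness beyond $W$, weights coincide for points sharing the excursion of $Z$ straddling $t$ (your choice of a rational $s$ inside the common excursion and the double application of (\ref{enfin}) is exactly the right mechanism), and the one-point data enter only through $m^{\pm}$. But the decisive step --- treating several \emph{distinct} excursion classes at once --- is precisely the point you flag as ``the main obstacle'' and then leave unexecuted, and the plan you sketch for it does not work as stated. First, $g_{0,t}(x_i)$, the start of the excursion of $Z_{0,\cdot}(x_i)$ straddling $t$, is a last-exit time before $t$, not an $(\mathcal F^W_u)_{u\geq0}$-stopping time; there is no ``stopping time at which $Z$ starts a new excursion corresponding to that class'' to which Lemma \ref{r} could be applied. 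Second, Lemma \ref{r} is a one-point statement (it requires $K_{0,T}(x)=\delta_y$ for a single $x$); ``applying it jointly'' across classes is exactly what requires proof, and the full conditional independence of all class-weights given $W$ that you assert is strictly stronger than what the lemma needs --- for an arbitrary solution $K$ it is only known a posteriori, once $K\overset{law}{=}K^{m^+,m^-}$ has been established, so invoking it here would be circular.

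The paper sidesteps both difficulties by an induction on $n$ that only ever separates \emph{one} point from the rest. On the event where the $g_{0,t}(x_i)$ are distinct and ordered, it sets $S=f(g_{0,t}(x_n),g_{0,t}(x_{n+1}))\in\mathbb D$ and conditions on the $W$-measurable event $\{S=s\}$ for each fixed dyadic $s<t$, which converts the random separating time into a deterministic one. On that event: (i) for $i\leq n$ the excursion of $Z_{0,\cdot}(x_i)$ straddling $t$ already straddles $s$, so $V_{0,t}(x_i)=V_{0,s}(x_i)$ and $(K_{0,t}(x_1),\cdots,K_{0,t}(x_n),W)$ is a function of $(V_{0,s}(x_1),\cdots,V_{0,s}(x_n),W)$, whose law is covered by the induction hypothesis $\mathbb P_{s,x_1,\cdots,x_n}$; (ii) by (\ref{enfin}), $K_{0,t}(x_{n+1})=K_{s,t}(X_{n+1})$ with $X_{n+1}$ being $\mathcal F^W_{0,s}$-measurable, and $K_{s,t}$ is independent of $\mathcal F_{0,s}$ by the flow property at the \emph{deterministic} time $s$, its law being $\mathbb P_{t-s,y}$. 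No joint strong Markov property and no conditional independence of all classes is ever invoked. To salvage your route you would have to run essentially this same peeling argument class by class, at which point the ``bookkeeping'' you defer is in fact the whole proof.
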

\begin{proof}
 We will prove the lemma by induction on $n$. For $n=1$, this is clear. Notice that if $t<\tau_{0,z}$, then  $K_{0,t}(z)$ is $\sigma(W)$ measurable and if $t>T^{z_1,z_2}_{0,0}$, then $K_{0,t}(z_1)=K_{0,t}(z_2)$. Suppose the result holds for $n\geq 1$ and let $x_{n+1}\in G$.  Then by the previous remark, we only need to check that the law of $(K_{0,t}(x_1),\cdots,K_{0,t}(x_{n+1}),W)$ conditionally to $A=\{\displaystyle\sup_{1\leq i\leq n+1}\tau_{0,x_i}<t<T_{0,\cdots,0}^{x_1,\cdots,x_{n+1}}\}$ only depends on $\{\mathbb P_{u,x}, u\geq0, x\in G\}$. Remark that on $A$, $\{g_{0,t}(x_i),1\leq i\leq n+1\}$ are distinct and so by summing over all possible cases, we may replace $A$ by $$E=\{\displaystyle\sup_{1\leq i\leq n+1}\tau_{0,x_i}<t<T_{0,\cdots,0}^{x_1,\cdots,x_{n+1}}, g_{0,t}(x_1)<\cdots<g_{0,t}(x_n)<g_{0,t}(x_{n+1})\}$$ Recall the definition of $f$ from Section \ref{humain} and let $S=f(g_{0,t}(x_n),g_{0,t}(x_{n+1}))$, $E_s=E\cap\{S=s\}$ for $s\in \mathbb D$. Then it will be sufficient to show that the law of $(K_{0,t}(x_1),\cdots,K_{0,t}(x_{n+1}),W)$ conditionally to $E_s$ only depends on $\{\mathbb P_{u,x}, u\geq0, x\in G\}$ where $s\in \mathbb D$ is fixed such that $s<t$. On $E_s$,
\begin{enumerate}
 \item [(i)] $(K_{0,t}(x_1),\cdots,K_{0,t}(x_n),W)$ is a measurable function of $(V_{s}(x_1),\cdots,V_{s}(x_n),W)$ as $(V_r(x_i), r\geq\tau_{0,x_i})$ is constant on the excursions of $(Z_r(x_i),r\geq\tau_{0,x_i})$.
 \item [(ii)] There exists a random variable $X_{n+1}$ which is $\mathcal F_{0,s}^{W}$ measurable and satisfies $K_{0,t}(x_{n+1})=K_{s,t}(X_{n+1})$ (from (\ref{enfin})).
\end{enumerate}
Clearly, the law of $(V_{s}(x_1),\cdots,V_{s}(x_n),K_{s,t}(X_{n+1}),W)$ is uniquely determined by $\{\mathbb P_{s,x_1,\cdots,x_n}, \mathbb P_{t-s,y}, y\in G\}$. This completes the proof.
\end{proof}
\begin{prop}
Let $(K^{m^+,m^-},W')$ be the solution constructed in Section 3 associated with $(m^+,m^-)$. Then $K\overset{law}{=}K^{m^+,m^-}$.
\end{prop}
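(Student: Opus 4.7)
The plan is to reduce the equality of laws as stochastic flows to an equality of one-point marginals with the white noise, and then verify the latter directly from the explicit descriptions of the two flows.

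First, I would invoke the independence and stationarity of increments (conditions (2) and (3) of Definition \ref{ghazel}) to argue that the law of $K$ as a stochastic flow is determined by the one-increment multi-point distributions $\mathrm{law}(K_{0,t}(x_1),\ldots,K_{0,t}(x_n))$, $t\geq 0$, $x_i\in G$. Lemma \ref{nouv} tells us that each such law (in fact the richer joint law $\mathbb P_{t,x_1,\ldots,x_n}$ which also records $W$) is uniquely determined by the one-point marginals $\mathbb P_{u,x}=\mathrm{law}(K_{0,u}(x),W)$, $u\geq 0$, $x\in G$. The same reduction applies to $K^{m^+,m^-}$ since it is also a solution of $(E)$. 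Hence the proof reduces to checking
$$\mathrm{law}(K_{0,t}(x),W) \;=\; \mathrm{law}(K^{m^+,m^-}_{0,t}(x),W')$$
for every $(t,x)$.

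For the one-point identification, I would use the explicit decomposition of $K_{0,t}(x)$ recalled at the start of Section \ref{cherif}, together with the identical decomposition of $K^{m^+,m^-}_{0,t}(x)$ produced by the construction in Section \ref{rayoum}. Both have the form
\begin{align*}
K_{0,t}(x) &= \delta_{x+\vec e(x)\varepsilon(x)W_{0,t}}\mathbf 1_{\{t\leq\tau_{0,x}\}}\\
&\quad + \Bigl(\sum_{i=1}^{p}V^{+,i}_{0,t}(x)\,\delta_{\vec e_i|Z_{0,t}(x)|}\mathbf 1_{\{Z_{0,t}(x)>0\}} + \sum_{i=p+1}^{N}V^{-,i}_{0,t}(x)\,\delta_{\vec e_i|Z_{0,t}(x)|}\mathbf 1_{\{Z_{0,t}(x)\leq 0\}}\Bigr)\mathbf 1_{\{t>\tau_{0,x}\}},
\end{align*}
with analogous weight vectors $U^\pm_{0,t}(x)$ for $K^{m^+,m^-}$. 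The stopping time $\tau_{0,x}$ and the process $Z_{0,t}(x)$ are deterministic functionals of the driving white noise, and $W$ and $W'$ are equal in law, so the task comes down to matching the conditional distributions of the weight vectors given the white noise on the two non-trivial events. By (\ref{ramond}) and (\ref{romond}), conditionally on $W$ and on $\{Z_{0,t}(x)>0,\,t>\tau_{0,x}\}$ (resp.\ $\{Z_{0,t}(x)<0,\,t>\tau_{0,x}\}$), $V^+_{0,t}(x)$ (resp.\ $V^-_{0,t}(x)$) has law $m^+$ (resp.\ $m^-$) and is independent of $W$; by construction $U^\pm_{0,t}(x)$ satisfy exactly the same description relative to $W'$, since Section \ref{rayoum} builds them from independent copies of $(\mathcal U^+,\mathcal U^-)\sim m^+\otimes m^-$ that are independent of $W'$. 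The joint laws therefore agree.

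The hard part is not this final matching but the two preparatory results on which it rests: Lemma \ref{nouv}, which uses the excursion structure and the coalescing flow $\varphi^c$ extracted from $K$ to reduce multi-point to one-point distributions, and the identities (\ref{ramond})--(\ref{romond}), which identify the conditional laws of the excursion weights of $K$ with the measures $m^\pm$. Once those are available, the present proposition is essentially a bookkeeping argument; the only subtlety to check carefully is that no extra information beyond the pair $(m^+,m^-)$ survives in the conditional law of the weights given $W$, which is exactly what (\ref{ramond})--(\ref{romond}) provide.
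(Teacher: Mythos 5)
Your proposal is correct and follows essentially the same route as the paper: match the one-point laws $(K_{0,t}(x),W)\overset{law}{=}(K^{m^+,m^-}_{0,t}(x),W')$ via (\ref{ramond})--(\ref{romond}), then bootstrap to $n$-point laws by the inductive argument of Lemma \ref{nouv}. The only point the paper makes explicit that you pass over quickly is that running that induction for $K^{m^+,m^-}$ requires checking that it also satisfies (\ref{enfin}), which follows because the flow $\varphi$ of Section 3 enjoys the same properties (i)--(v) as $\varphi^c$.
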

\begin{proof}
From (\ref{ramond}) and (\ref{romond}), $(K_{0,t}(x),W)\overset{law}{=}(K^{m^+,m^-}_{0,t}(x),W')$ for all $t>0$ and $x\in G$. Notice that all the properties (i)-(v) mentioned just above are satisfied by the flow $\varphi$ constructed in Section 3 and consequently $K^{m^+,m^-}$ satisfies also (\ref{enfin}) using the same arguments. By following the same steps as in the proof of Lemma \ref{nouv}, we show by induction on $n$ that
$$(K_{0,t}(x_1),\cdots,K_{0,t}(x_n),W)\overset{law}{=}(K^{m^+,m^-}_{0,t}(x_1),\cdots,K^{m^+,m^-}_{0,t}(x_n),W')$$ 
for all $t>0, x_1,\cdots,x_n\in G$. This proves the proposition.
\end{proof}

\begin{rem}\label{wfa}
When $K$ is a stochastic flow of mappings, then by definition $$(m^+,m^-)=(\displaystyle{\sum_{i=1}^{p}} \frac{\alpha_i}{\alpha^+} \delta_{(0,..,0,1,0,..,0)},\displaystyle{\sum_{i=p+1}^{N}} \frac{\alpha_i}{\alpha^-} \delta_{(0,..,0,1,0,..,0)}).$$ This shows that there is only one flow of mappings solving $(E)$.  
\end{rem}
\subsection{The case $\alpha^+=\frac{1}{2}, N>2$.}
Let $K^W$ be the flow given by (\ref{habla}), where $Z_{s,t}(x)=\varepsilon (x)|x|+W_t-W_s$. It is easy to verify that $K^W$ is a Wiener flow. \textbf{Fix} $s\in\R, x\in G$. Then, by following ideas of Section \ref{humain}, one can construct a real white noise $W$ and a process $(X_{s,t}^x, t\geq s)$ which is an $W(\alpha_1,\cdots,\alpha_N)$ started at $x$ such that 
\begin{itemize}
 \item{(i)} for all $t\geq s, f\in D(\alpha_1,\cdots,\alpha_N)$,
$$f(X_{s,t}^x)=f(x)+\int_s^t(\varepsilon f')(X_{s,u}^x)W(du) + \frac{1}{2}\int_s^tf''(X_{s,u}^x)du\ \ a.s.$$
\item{(ii)} for all $t\geq s$,  $K^W_{s,t}(x)=E[\delta_{X_{s,t}^x}|\sigma(W)]\ \textrm{a.s}.$
\end{itemize}
By conditioning with respect to $\sigma(W)$ in (i), this shows that $K^W$ solves $(E)$. Now, let $(K,W)$ be any other solution of $(E)$ and set $P^n_t=E[K^{\otimes n}_{0,t}]$. From the hypothesis $\alpha^+=\frac{1}{2}$, we see that $h(x)=\varepsilon (x)|x|$ belongs to $D(\alpha_1,\cdots,\alpha_N)$ and by applying $h$ in $(E)$, we get $K_{0,t}h(x)=h(x)+W_t$. Denote by $(X^{x_1},X^{x_2})$ the two-point motion started at $(x_1,x_2)\in G^2$ associated to $P^2$. Since $|X^{x_i}|$ is a reflected Brownian motion started at $|x_i|$ (Theorem \ref{palestine}), we have $E[|X_t^{x_i}|^2]=t+|x_i|^2$. From the preceding observation $E[h(X_t^{x_1})h(X_t^{x_2})]=E[K_{0,t}h(x_1)K_{0,t}h(x_2)]=h(x_1)h(x_2)+t$ and therefore
$$E[(h(X_t^{x_1})-h(X_t^{x_2})-h(x_1)+h(x_2))^2]=0.$$
This shows that $h(X_t^{x_1})-h(X_t^{x_2})=h(x_1)-h(x_2)$. Now we will check by induction on $n$ that $P^n$ does not depend on $K$. For $n=1$, this follows from Proposition \ref{vi}. Suppose the result holds for $n$ and let $(x_1,\cdots,x_{n+1})\in G^{n+1}$ such that $h(x_i)\neq h(x_j), i\neq j$. Let $\tau_{x_i}=\inf\{r\geq0 : X_r^{x_i}=0\}=\inf\{r\geq0 : h(X_r^{x_i})=0\}$ and $(x_i,x_j)\in G^+\times G^-$ such that $h(x_i)<h(x_k), h(x_h)<h(x_j)$ for all $(x_k,x_h)\in G^+\times G^-$ (when $(x_i,x_j)$ does not exist the proof is simpler). Clearly $\tau_{x_k}$ is a function of $X^{x_h}$ for all $h,k\in[1,n+1]$ and so for all measurable bounded $f : G^{n+1}\longrightarrow\R$,

$$f(X_t^{x_1},\cdots,X_t^{x_{n+1}}) 1_{\{t<\tau_{x_i}, \inf_{1\leq k\leq n+1} \tau_{x_k}=\tau_{x_i}\}}\ \textrm{is a function of}\ X^{x_i}$$
and $$f(X_t^{x_1},\cdots,X_t^{x_{n+1}}) 1_{\{t<\tau_{x_j}, \inf_{1\leq k\leq n+1} \tau_{x_k}=\tau_{x_j}\}}\ \textrm{is a function of}\ X^{x_j}.$$
where $t>0$ is fixed. This shows that $E[f(X_t^{x_1},\cdots,X_t^{x_{n+1}}) 1_{\{t<\inf_{1\leq k\leq n+1} \tau_{x_k}\}}]$ only depends on $P^1$. Consider the following stopping times $$S_0=\inf_{1\leq i\leq n+1}\tau_{x_i}, S_{k+1}=\inf\{r\geq S_k : \exists j\in [1,n+1], X_r^{x_j}=0, X_{S_k}^{x_j}\neq0\}, k\geq0.$$ 
Remark that $(S_k)_{k\geq0}$ is a function of $X^{x_h}$ for all $h\in [1,n+1]$. By summing over all possible cases we need only check the unicity in law of $(X_t^{x_1},\cdots,X_t^{x_{n+1}})$ conditionally to $A=\{S_k<t<S_{k+1}, X_{S_k}^{x_h}=0\}$ where $k\geq0, h\in [1,n+1]$ are fixed. Write $A=B\cap\{t-S_k<T\}$ where $B=\{S_k<t, X_{S_k}^{x_h}=0\}=\{S_k<t, X_{S_k}^{x_i}\neq0\ \textrm{if}\  i\neq h\}$ and $T=\inf\{r\geq0, \exists j\neq h : X^{x_j}_{r+S_k}=0\}$. On $A$, $X_t^{x_i}$ is a function of $(X_{S_k}^{x_i},X_t^{x_h})$ and therefore for all measurable bounded $f : G^{n+1}\longrightarrow\R$,
$$f(X_t^{x_1},\cdots,X_t^{x_{n+1}}) 1_A\ \textrm{may be written as}\ \ g((X_{S_k}^{x_i})_{i\neq h},X_t^{x_{h}}) 1_A$$ 
where $g$ is measurable bounded from $G^{n+1}$ into $\R$. By the strong Markov property for $X=(X^{x_1},\cdots,X^{x_{n+1}})$, we have
$$1_{B}E[1_{\{t-S_k<T\}}g((X_{S_k}^{x_i})_{i\neq h},X_t^{x_{h}})|\mathcal F_{S_k}^{X}]=1_{B}\psi(t-S_k,(X_{S_k}^{x_i})_{i\neq h})$$
where
$$\psi(u,y_1,\cdots,y_n)=E[1_{\{u<\inf\{r\geq0 : \exists j\in[1,n], X_r^{y_j}=0\}\}}g(y_1,\cdots,y_n,X_u^{0})].$$
This shows that $E[f(X_t^{x_1},\cdots,X_t^{x_{n+1}})1_A]$ only depends on the law of $(X^{x_i})_{i\neq h}$. As a result, $P_t^{n+1}((x_1,\cdots,x_{n+1}),dy)$ is unique whenever $h(x_i)\neq h(x_j), i\neq j$ and by an approximation argument for all $(x_1,\cdots,x_{n+1})\in G^{n+1}$. Since a stochastic flow of kernels is uniquely determined by the compatible system of its $n$-point motions, this proves $(2)$ of Theorem \ref{hajri}.
\section{Appendix: Freidlin-Sheu formula.}
In this section, we shall prove Theorem \ref{palestine}. We begin by\\
\textbf{Preliminary remarks.}  We recall that if $Y$ is a semimartingale satisfying $\left\langle {Y}\right\rangle=\left\langle {|Y|}\right\rangle$ then $\tilde L_t(Y)=\tilde L_t(|Y|)$. Let $L_t(Y)$ be the (non symmetric) local time at $0$ of $Y$ and $\alpha\in[0, 1]$. If $Y$ is a $SBM(\alpha)$, then $L_{t}(Y)=2\alpha\tilde L_{t}(Y)$ by identifying Tanaka's formulas for symmetric and non symmetric local time for $Y$.\\
Let $Q$ be the semigroup of the reflecting Brownian motion on $\R$ and define $\varPhi(x)=|x|$. Then $X_t=\varPhi(Z_t)$ and it can be easily checked that $P_{t}(f\circ\varPhi)=Q_tf\circ\varPhi$ for all bounded measurable function $f : \R\longrightarrow\R$ which proves (i). (ii) is an easy consequence of Tanaka's formula for local time.\\
(iii) Set $\tau_z=\inf\{r\geq 0 , Z_r=0 \}$. For $t\leq \tau_z$, (\ref{Najia}) holds from It\^o's formula applied to the semimartingale $X$.
By discussing the cases $t\leq \tau_z$ and $t>\tau_z$, one can assume that $z=0$ and so in the sequel we take $z=0$.\\
For all $i\in [1,N]$, define $Z_{t}^i = |Z_t| 1_{\{Z_t\in D_i\}}-|Z_{t}| 1_{\{Z_t\notin D_i\}}$. Then $Z^i_t=\varPhi^i(Z_t)$ where $\varPhi^i(x)=|x| 1_{\{x\in D_i\}}-|x| 1_{\{x\notin D_i\}}$. Let $Q^i$ be the semigroup of the $SBM(\alpha_i)$. Then the following relation is easy to check: $P_{t}(f\circ\varPhi^i)=Q_t^if\circ\varPhi^i$ for all bounded measurable function $f :\R\longrightarrow\R$ which shows that $Z^i$ is a $SBM(\alpha_i)$ started at $0$. We use the notation $(\mathbb P)$ to denote the convergence in probability.

\noindent Let $\delta> 0$. Define $\tau_{0}^\delta=\theta_{0}^\delta=0$ and for $n\geqslant 1$	
$$\theta_{n}^{\delta}=\inf\{r\geq \tau_{n-1}^{\delta} ,|Z_r|=\delta\},\ \ \tau_{n}^{\delta}=\inf\{r\geq \theta_{n}^\delta , Z_r=0\}.$$
Let $f\in C_b^2(G^{*})$ and $t>0$. Then
$$f(Z_t)-f(0)=\sum_{n=0}^{\infty} f(Z_{\theta_{n+1}^\delta\wedge t})-f(Z_{\theta_{n}^\delta\wedge t})=Q_1^{\delta}+Q_2^{\delta}+Q_3^{\delta}$$ 
where\\
$Q_1^{\delta}=\displaystyle{\sum_{n=0}^{\infty}(f(Z_{\theta_{n+1}^\delta\wedge t})-f(Z_{\tau_{n}^\delta\wedge t}))}
-\sum_ {i=1}^{N}\sum_{n=0}^{\infty} \delta f_i'(0+) 1_{\{\theta_{n+1}^\delta\leq t,Z_{\theta_{n+1}^\delta}\in D_i\}},$\\
\\
$Q_2^{\delta}=\displaystyle{\sum_ {i=1}^{N}\sum_{n=0}^{\infty}\delta f_i'(0+) 1_{\{\theta_{n+1}^\delta\leq t,Z_{\theta_{n+1}^\delta}\in D_i\}}},\ \ Q_3^{\delta}=\displaystyle{\sum_{n=0}^{\infty} f(Z_{\tau_{n}^\delta\wedge t})-f(Z_{\theta_{n}^\delta\wedge t})}.$\\
We first show that $Q_1^{\delta}\xrightarrow[\text{$\delta\rightarrow 0 $}]\ 0$ $(\mathbb P)$ and for this write $Q_1^{\delta}=Q_{(1,1)}^{\delta}+Q_{(1,2)}^{\delta}$ with\\
$Q_{(1,1)}^{\delta}=\displaystyle{\sum_{n=0}^{\infty}\sum_{i=1}^{N}(f(Z_{\theta_{n+1}^{\delta}})-f(Z_{\tau_{n}^{\delta}})-\delta f_i'(0+)) 1_{\{\theta_{n+1}^\delta\leq t,Z_{\theta_{n+1}^{\delta}}\in D_i\}}}$,\\
$Q_{(1,2)}^{\delta}=\displaystyle{\sum_{n=0}^{\infty}\sum_{i=1}^{N}(f(Z_{t})-f(Z_{\tau_{n}^\delta\wedge t})) 1_{\{\theta_{n+1}^\delta > t,Z_{\theta_{n+1}^{\delta}}\in D_i\}}}$.\\
Since $f\in C_b^2(G^*)$, we have\\
(i)$\forall i\in [1,N];\ \ \ \displaystyle\int_{0}^{\delta}(f_{i}'(u)-f_{i}'(0+))du=f_{i}(\delta)-f_{i}(0)-\delta f_i'(0+) $.\\
(ii) There exists $M>0$ such that $\forall i\in [1,N],\ u\geq 0 : |f_{i}'(u)-f_{i}'(0+)|\leq Mu$.\\
Consequently\\
$|Q_{(1,1)}^{\delta}|=|\displaystyle{\sum_{n=0}^{\infty}\sum_{i=1}^{N}(f_{i}(\delta)-f_{i}(0)-\delta f_i'(0+)) 1_{\{\theta_{n+1}^\delta\leq t,Z_{\theta_{n+1}^{\delta}}\in D_i\}}}|\leq \frac{NM\delta^2}{2}\displaystyle\sum_ {n=0}^{\infty} 1_{\{\theta_{n+1}^\delta\leq t\}}$.\\
It is known that $\delta\displaystyle\sum_{n=0}^{\infty} 1_{\{\theta_{n+1}^\delta\leq t\}} \xrightarrow[\text{$\delta\rightarrow 0 $}]\ \frac{1}{2}L_{t}(X)$ $(\mathbb P)$ (\cite{MR1725357}) and therefore $Q_{(1,1)}^{\delta}\xrightarrow[\text{$\delta\rightarrow 0 $}]\ 0$ $(\mathbb P)$.\\
Let $C>0$ such that $\forall i\in [1,N],\ u\geq 0 : |f_{i}(u)-f_{i}(0)|\leq Cu$. Then
\begin{eqnarray}
|Q_{(1,2)}^{\delta}|&=&|\sum_{n=0}^{\infty}\sum_{i=1}^{N} (f(Z_{t})-f(Z_{\tau_{n}^\delta\wedge t})) 1_{\{\theta_{n+1}^\delta > t,Z_{\theta_{n+1}^{\delta}}\in D_i\}}|\nonumber\\
&\leqslant&\sum_{n=0}^{\infty}\sum_{i=1}^{N} |f_{i}(X_{t})-f_{i}(0)| 1_{\{\tau_{n}^\delta< t< \theta_{n+1}^\delta ,Z_{\theta_{n+1}^{\delta}}\in D_i\}}\nonumber\\
&\leqslant&CX_t\sum_{n=0}^{\infty} 1_{\{\tau_{n}^\delta< t< \theta_{n+1}^\delta\}}\leq {C\delta}\nonumber\
\end{eqnarray}
which shows that \ $Q_{(1,2)}^{\delta}\xrightarrow[\text{$\delta\rightarrow 0 $}]\ 0$\ \ a.s. and so $Q_{1}^{\delta}\xrightarrow[\text{$\delta\rightarrow 0 $}]\ 0 \ $ $(\mathbb P)$.\\
Now define $Q_{(2,i)}^{\delta}=\delta\displaystyle{\sum_{n=0}^{\infty} 1_{\{\theta_{n+1}^\delta\leq t,Z_{\theta_{n+1}^\delta}\in D_i\}}}$. Since $\displaystyle{\sum_{n=0}^{\infty} 1_{\{\theta_{n+1}^\delta\leq t,Z_{\theta_{n+1}^\delta}\in D_i\}}}$ is the number of upcrossings of $Z^i$ from $0$ to $\delta$ before time $t$, we have $Q_{(2,i)}^{\delta}\xrightarrow[\text{$\delta\rightarrow 0 $}]\ \frac{1}{2}L_{t}(Z^{i})$ $(\mathbb P)$. Using our preliminary remarks, we see that $Q_{2}^{\delta}\xrightarrow[\text{$\delta\rightarrow 0 $}]\ (\displaystyle{\sum_{i=1}^{N}\alpha_if_{i}'(0+))\tilde L_t(X)}\ (\mathbb P)$.\\
We now establish that $Q_{3}^{\delta}\xrightarrow[\text{$\delta\rightarrow 0 $}]\ {\displaystyle\int_{0}^{t}f'(Z_s)dB_s+\frac{1}{2}\displaystyle\int_{0}^{t}f''(Z_s)ds}\ $ $(\mathbb P)$. For this write $Q_{3}^{\delta}=Q_{(3,1)}^{\delta}+Q_{(3,2)}^{\delta}$ with
$$Q_{(3,1)}^{\delta}=\sum_{n=0}^{\infty}(f(Z_{\tau_{n}^{\delta}})-f(Z_{\theta_{n}^{\delta}})) 1_{\{\tau_{n}^\delta\leq t\}}=\sum_{n=0}^{\infty}\sum_{i=1}^{N}(f(0)-f_{i}(\delta)) 1_{\{\tau_{n}^\delta\leq t,Z_{\theta_{n}^\delta}\in D_i\}},$$
$$Q_{(3,2)}^{\delta}=\sum_{i=1}^{N}(f(Z_t)-f_{i}(\delta))\sharp\{n\in\N : \theta_{n}^\delta< t<\tau_{n}^\delta, Z_{\theta_{n}^\delta}\in D_i\}.$$
It is clear that $\sharp\{n\in\N : \theta_{n}^\delta< t<\tau_{n}^\delta, Z_{\theta_{n}^\delta}\in D_i\}\xrightarrow[\text{$\delta\rightarrow 0 $}]\ 1_{\{Z_{t}\in {D_i\setminus\{0\}}\}}$ a.s. and so $Q_{(3,2)}^{\delta}$ converges to $f(Z_t)-f(0)$ as $\delta\rightarrow0$ a.s. Define $\tau_{0}^{\delta,i}=\theta_{0}^{\delta,i}=0$ and 	
$$\theta_{n}^{\delta,i}=\inf\{r\geq \tau_{n-1}^{\delta,i} ,Z_r=\delta\vec{e}_i\};\ \ \tau_{n}^{\delta,i}=\inf\{r\geq \theta_{n}^{\delta,i} , Z_r=0\}, n\geqslant 1.$$
Using ${\sum_{n=0}^{\infty} 1_{\{\tau_{n}^\delta\leq t,Z_{\theta_{n}^\delta}\in D_i\}}}=\sum_{n=0}^{\infty} 1_{\{\tau_{n}^{\delta,i}\leq t\}}$, we get $Q_{(3,1)}^{\delta}=\sum_{n=0}^{\infty}\sum_{i=1}^{N}(f(0)-f_{i}(\delta)) 1_{\{\tau_{n}^{\delta,i}\leq t\}}.$
\\
On the other hand
$$\sum_{n=0}^{\infty} (f_{i}(X_{\tau_{n}^{\delta,i}\wedge t})-f_{i}(X_{\theta_{n}^{\delta,i}\wedge t}))=\sum_{n=0}^{\infty} (f_{i}(X_{\tau_{n}^{\delta,i}})-f_{i}(X_{\theta_{n}^{\delta,i}})) 1_{\{\tau_{n}^{\delta,i}\leq t\}}+\sum_{n=0}^{\infty} (f_{i}(X_{t})-f_{i}(0)) 1_{\{\theta_{n}^{\delta,i}< t<\tau_{n}^{\delta,i}\}}$$
and therefore $$Q_{(3,1)}^{\delta}=\sum_{i=1}^{N}\sum_ {n=0}^{\infty} (f_{i}(X_{\tau_{n}^{\delta,i}\wedge t})-f_{i}(X_{\theta_{n}^{\delta,i}\wedge t}))-\sum_{i=1}^{N} (f_{i}(X_{t})-f_{i}(0))\times \sharp \{n\in{\N},\theta_{n}^{\delta,i}<t<\tau_{n}^{\delta,i} \}.$$
Since $\ \sharp \{ n\in {\N},\theta_{n}^{\delta,i}<t<\tau_{n}^{\delta,i} \}\xrightarrow[\text{$\delta\rightarrow 0 $}]\ 1_{\{Z_{t}\in {D_i\setminus\{0\}}\}}$ a.s., we deduce that $$Q_{3}^{\delta}\overset{\delta\rightarrow 0}{=}\sum_{i=1}^{N}\sum_ {n=0}^{\infty} (f_{i}(X_{\tau_{n}^{\delta,i}\wedge t})-f_{i}(X_{\theta_{n}^{\delta,i}\wedge t}))+o(1)\ \  a.s.$$
For all $i\in [1,N]$, let $\tilde f_i$ be $C^2$ on $\R$ such that $\tilde f_i=f_i$ on ${\R_+},$\ \ \ $\tilde f_i^{'}=f_i^{'},\ \tilde f_i^{''}=f_i^{''}$ on ${\R}_{+}^{*}$.\\
Now a.s.$$\forall s\in [0,t] ,i\in [1,N]\ \ \ \sum_{n=0}^{\infty} 1_{[\theta_{n}^{\delta,i}\wedge t,\tau_{n}^{\delta,i}\wedge t[}(s)\xrightarrow[\text{$\delta\rightarrow 0 $}]\ 1_{\{Z_{s}\in {D_i\setminus\{0\}}\}}.$$
By dominated convergence for stochastic integrals, \\
$\displaystyle{\sum_{n=0}^{\infty} \tilde f_{i}(X_{\tau_{n}^{\delta,i}\wedge t})-\tilde f_{i}(X_{\theta_{n}^{\delta,i}\wedge t})}=\int_{0}^{t}\displaystyle{\sum_{n=0}^{\infty}} 1_{[\theta_{n}^{\delta,i}\wedge t,\tau_{n}^{\delta,i}\wedge t[}(s)d\tilde f_{i}(X_s)\xrightarrow[\text{$\delta\rightarrow 0 $}]\ \int_{0}^{t} 1_{\{Z_{s}\in {D_i\setminus\{0\}}\}}d\tilde f_{i}(X_s)\ \ (\mathbb P).$\\
Finally
\begin{eqnarray}
\int_{0}^{t}f'(Z_s)dB_s+\frac{1}{2}\int_{0}^{t}f''(Z_s)ds&=&\sum_{i=1}^{N}\int_{0}^{t} 1_{\{Z_{s}\in {D_i\setminus\{0\}}\}}(f_{i}'(X_s)dB_s+\frac{1}{2}f_{i}''(X_s)ds)\nonumber\\
&=&\sum_{i=1}^{N}\int_{0}^{t} 1_{\{Z_{s}\in{D_i\setminus\{0\}}\}}d\tilde f_{i}(X_s).\ \ \nonumber\
\end{eqnarray}
by It\^o's formula and using the fact that $d\tilde L_s(X)$ is carried by $\{s : Z_s=0\}$. Now the proof of Theorem \ref{palestine} is complete.
\\ 
\nocite{*}

\textbf{Acknowledgements.} I sincerely thank my supervisor Yves Le Jan for his guidance through my Ph.D. thesis and for his careful reading of this paper. I am also grateful to Olivier Raimond for very useful discussions, for his careful reading of this paper and for his assistance in proving Lemma \ref{sophie}.

\bibliographystyle{plain}
\bibliography{mabibli1}

\end{document}